\newtheorem{theorem}{Theorem}[section]
\newtheorem{corollary}[theorem]{Corollary}
\newtheorem{lemma}[theorem]{Lemma}
\newtheorem{proposition}[theorem]{Proposition}
\newtheorem*{defn}{Definition}
\newcommand{\BMO}{\textit{BMO}}
\newcommand{\C}{{\mathbb C}}
\newcommand{\charfn}{{1}}
\newcommand{\Id}{\text{Id}}
\newcommand{\R}{{\mathbb R}}
\newcommand{\Rd}{{{\mathbb R}^d}}
\newcommand{\Z}{{\mathbb Z}}
\newcommand{\eps}{\varepsilon}
\newcommand{\supp}{\operatorname{supp}}
\begin{document}

\title[Wavelet frame bijectivity]{Wavelet frame bijectivity on Lebesgue and Hardy spaces}
\author{H.-Q. Bui and R. S. Laugesen}
\address{Department of Mathematics, University of Canterbury,
  Christchurch 8020, New Zealand}
\email{Huy-Qui.Bui@canterbury.ac.nz}
\address{Department of Mathematics, University of Illinois, Urbana,
IL 61801, U.S.A.}
\email{Laugesen@illinois.edu}

%\thanks{}%
\subjclass[2010]{Primary 42C40. Secondary 42C15.}
\keywords{Calder\'{o}n--Zygmund operator, Mexican hat}

\date{\today}
\dedicatory{Dedicated to our friend and mentor Guido Weiss.}
%
%\commby{}%
% ----------------------------------------------------------------
\begin{abstract}
We prove a sufficient condition for frame-type wavelet series in $L^p$, the Hardy space $H^1$, and $\BMO$. For example, functions in these spaces are shown to have expansions in terms of the Mexican hat wavelet, thus giving a strong answer to an old question of Meyer.

Bijectivity of the wavelet frame operator acting on Hardy space is established with the help of new frequency-domain estimates on the Calder\'{o}n--Zygmund constants of the frame kernel.
\end{abstract}
\maketitle
% ----------------------------------------------------------------

\section{\bf Introduction}
\label{intro}

One of the first wavelets ever considered was the Mexican hat (see Figure~\ref{mexican}), which is
\[
\psi(x) = (1-x^2) e^{-x^2/2} , \qquad x \in \R .
\]
\begin{figure}[t]
  \begin{minipage}{.45\linewidth} \hspace{.2in}
  \includegraphics[scale=0.4]{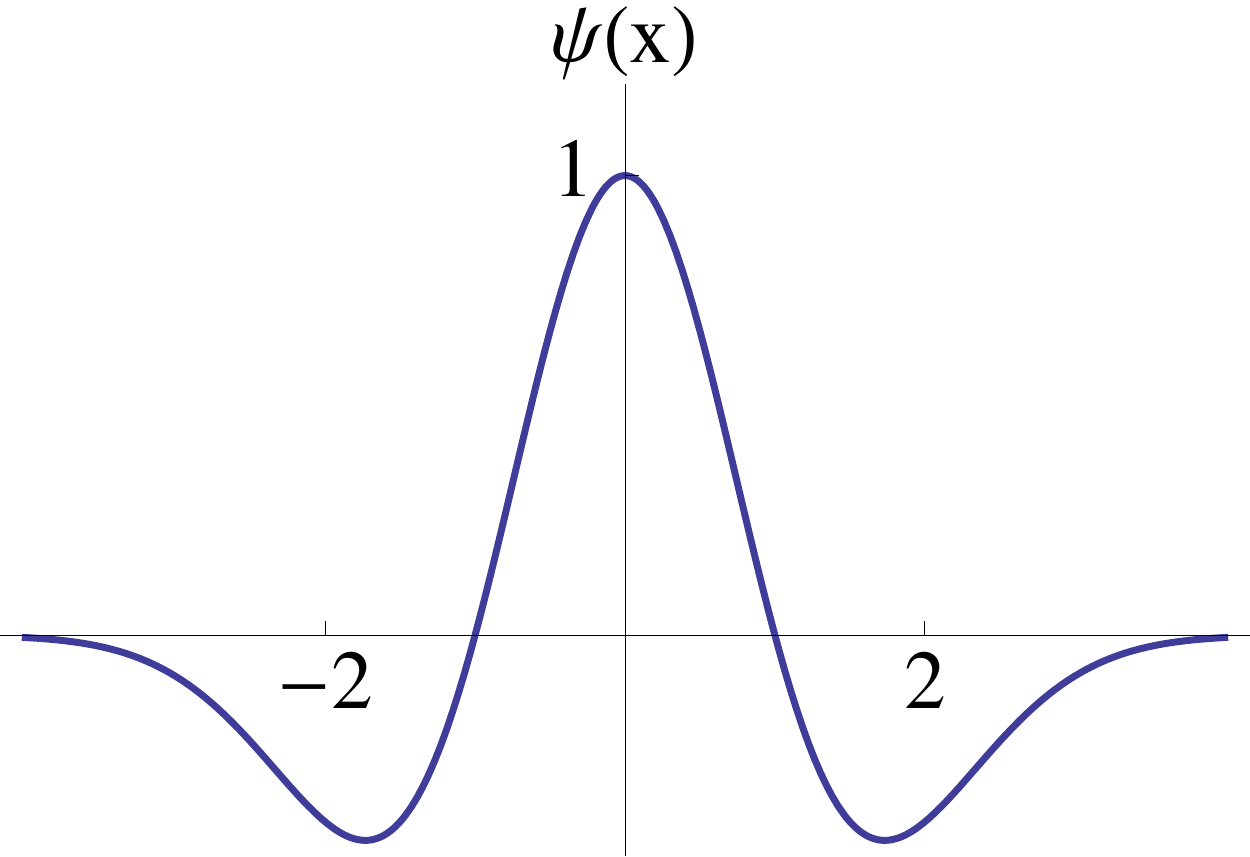}
  \end{minipage}
  \begin{minipage}{.45\linewidth} \hspace{.3in}
  \includegraphics[scale=0.4]{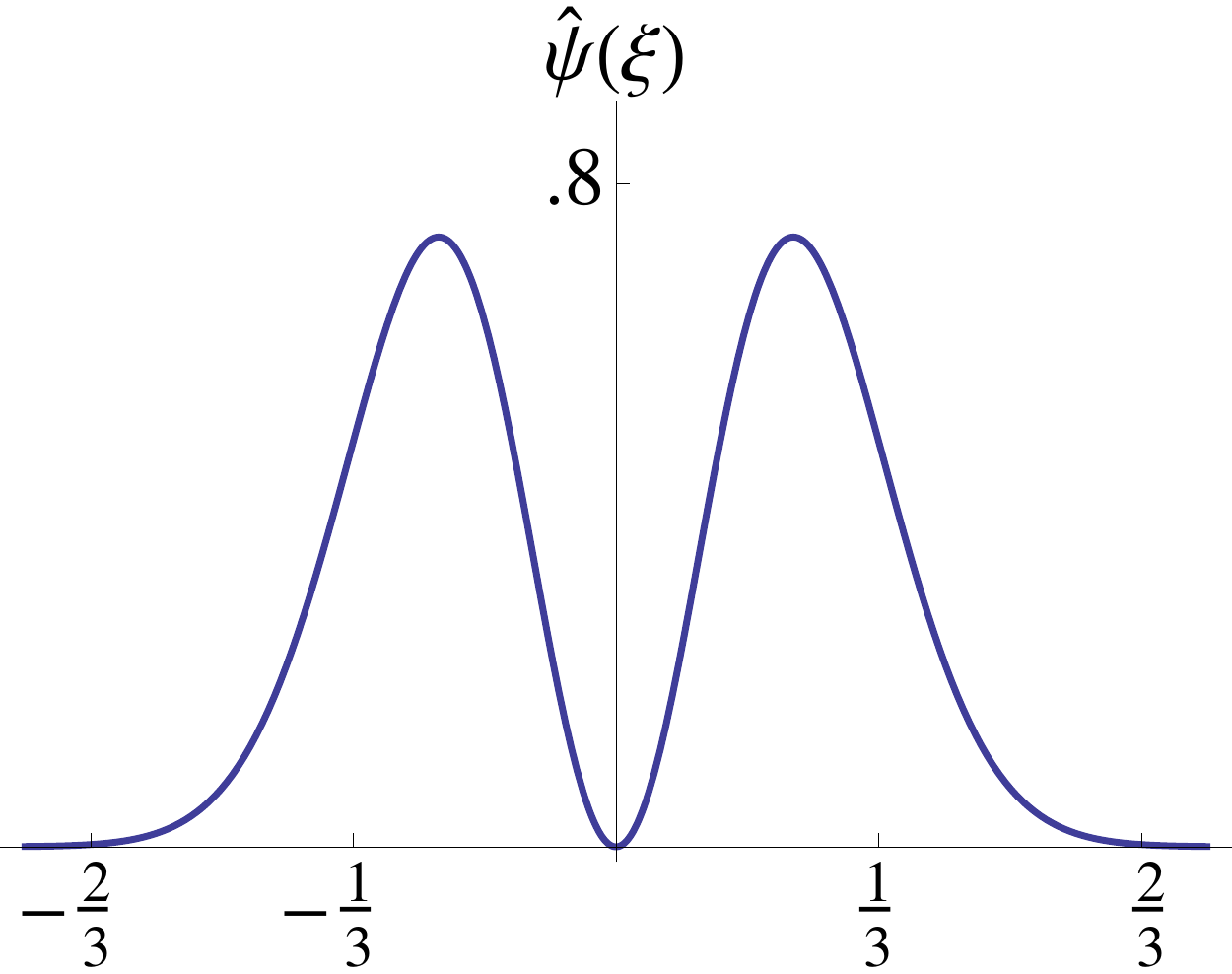}
  \end{minipage}
    \caption{\label{mexican}
    The Mexican hat function $\psi(x)=-\big( e^{-x^2/2} \big)^{\prime \prime}=(1-x^2) e^{-x^2/2}$,
    and its Fourier transform $\widehat{\psi}(\xi)=(2\pi \xi)^2 e^{-(2\pi)^2 \xi^2/2}$.}
\end{figure}
Morlet \cite{M83} generated the Mexican hat system
\[
 \psi_{j,k}(x) = 2^{j/2} \psi(2^j x - k) , \qquad j,k \in \Z ,
\]
and applied it to geophysical problems as if it were orthonormal (which it is not). Daubechies \cite[p.~75]{D92} explained why Morlet's approach succeeded: the system is an almost-tight \emph{frame}, with its frame operator
\[
f \mapsto  \sum_{j,k} \langle f , \psi_{j,k} \rangle \psi_{j,k}
\]
being a bijection on $L^2$, and indeed being rather close to the identity. In other words, analysis followed by synthesis with the Mexican hat system provides almost-perfect reconstruction in $L^2$. Hence the frame operator is surjective, and so every square integrable function can be decomposed into a norm convergent series in terms of the wavelet system $\psi_{j,k}$. That is, each $g \in L^2$ has an unconditionally convergent series expansion
\[
g = \sum_{j,k} c_{j,k} \psi_{j,k}
\]
for some coefficients $c_{j,k}$. Do such series expansions hold also when $g \in L^p$ for $p \neq 2$?

Meyer highlighted the extent of our ignorance on this issue when he remarked that \emph{``we do not know whether the functions $2^{j/2} \psi(2^j x - k), j, k \in \Z$, form a complete set in $L^p(\R)$ for $1<p<\infty$''} \cite[p.~137]{M92}. In other words, are finite linear combinations of the $\psi_{j,k}$ dense in $L^p$? Note that proving existence of series expansions in $L^p$ would be even stronger than completeness.

Two counterexamples reveal the subtlety of the problem. Tao \cite{TXX} showed for each $p \in (1,2)$ that there exists a smooth, compactly supported synthesizer $\psi$ that is ``good'' on $L^2$ (generating a wavelet frame there) but is ``bad'' on $L^p$ (having non-bijective frame operator). For $p \in(2,\infty)$, Tchamitchian and Lemari\'{e} \cite[pp.\ 130,136]{M92} found examples with even worse behavior. Their system $\{ \psi_{j,k} \}$ fails even to be complete in $L^p$.

Despite these obstacles, we recently answered Meyer's completeness question in the affirmative, by proving $L^p$-completeness criteria for general wavelet systems \cite{bl5,bl6}, and verifying those criteria for the specific Mexican hat example. Yet the stronger question of series expansions in $L^p$ remained open.

This paper answers the stronger question, by proving criteria for wavelet series expansions of arbitrary functions in $L^p, 1<p<\infty$, as well as in the Hardy space $H^1$ and $\BMO$ (see Theorem~\ref{invertible}). These bijectivity criteria on the frame operator are explicitly computable. We verify them numerically for the Mexican hat system, in Section~\ref{mexicanexample}.

To prove the main results (Theorem~\ref{invertible} and Corollary~\ref{cor-expansions}) we develop explicit norm estimates on general Calder\'{o}n--Zygmund operators (Proposition~\ref{CZatomicbound}) and computable bounds on the Calder\'{o}n--Zygmund constants for the wavelet frame kernel (Theorem~\ref{frameCZOfreq}).

An innovative feature of our work is that we estimate the wavelet frame kernel in the frequency domain. Prior approaches proceeded in the time domain: see \cite[Ch.\,5]{HW96}, \cite[p.\,165]{M92}, and related approaches \cite{G93,W99}. Estimates in the frequency domain are better for our purposes, for three reasons, First, we can estimate the Calder\'{o}n--Zygmund constants while keeping the analyzer and synthesizer together instead of splitting them apart (as discussed in Section~\ref{freqestimates}). Second, in the frequency domain we need not preserve cancelations, because $\widehat{\psi}$ is nonnegative for many examples (such as the Mexican hat in Figure~\ref{mexican}). Third, the construction of a ``good'' analyzer is easier in the frequency domain, where in order to obtain almost-perfect reconstruction of $L^2$ we need only satisfy a discrete Calder\'{o}n condition and ensure that the ``overlaps" are sufficiently small (see Section~\ref{mexicanexample}).

The techniques in this paper are largely unrelated to our earlier work \cite{bl5,bl6}, which relied on $L^2$-type methods in the frequency domain and did not use the Calder\'{o}n--Zygmund theory.

\subsection*{Wavelet definitions}
To state the main result, we need some definitions. Fix $A,B \in \R$ with
\[
|A|>1 \qquad \text{and} \qquad  B > 0.
\]
Call $A$ the \emph{dilation factor}, and $B$ the \emph{translation step}.
We rescale functions $\psi,\phi \in L^2$ by
\[
\psi_{j,k}(x) = |A|^{j/2} \psi(A^j x - Bk) , \qquad \phi_{j,k}(x) = |A|^{j/2} \phi(A^j x - Bk) , \qquad j,k \in \Z .
\]
The wavelet \emph{synthesis operator} associated with $\psi$ is the map
\[
c = \{ c_{j,k} \} \mapsto S(c) = \sum_{j,k \in \Z} c_{j,k} \psi_{j,k}
\]
where the coefficients $c_{j,k}$ are complex numbers. The \emph{analysis operator} associated with $\phi$ is the map
\[
f \mapsto T(f) = \{ \, B \langle f , \phi_{j,k} \rangle_{L^2} \, \}_{j, k \in \Z}
\]
where $\langle f , g \rangle_{L^2} = \int_\R f \, \overline{g} \, dx$. (The ``$B$'' in the definition of analysis appears only for later convenience. Some authors use $B \log |A|$ instead, to emphasize their Riemann sum interpretation with respect to the continuous parameter case; see \cite{D92}.) Sometimes we write $S_\psi$ and $T_\phi$, to emphasize the dependence of these operators on the underlying synthesizer and analyzer.

The \emph{wavelet frame operator} consists of analysis followed by synthesis:
\[
ST (f) = B \sum_{j,k \in \Z} \langle f , \phi_{j,k} \rangle_{L^2} \, \psi_{j,k} .
\]

\subsection*{Main results}
Write $X(\xi)=\xi$ for the identity function on $\R$, and $W^{k,p}=W^{k,p}(\R)$ for the usual Sobolev space. Also, write $f \lesssim g$ to mean $f \leq (\text{const.}) g$.

Two quantities $M_1(\psi,\phi)$ and $M_\infty(\psi,\phi)$ will be defined later, in Section~\ref{mainproofs}. Our main result says that if these quantities are less than $1$, then the wavelet frame operator is bijective on $H^1$ and $\BMO$ and hence on $L^p$ for $1<p<\infty$.
\begin{theorem}[Invertibility of frame operator] \label{invertible}
Assume $\widehat{\psi}, \widehat{\phi}, X\widehat{\psi}, X\widehat{\phi}  \in W^{3,1} \cap W^{3,2}$ with $\widehat{\psi}(0)=\widehat{\phi}(0)=0$, and that the derivatives decay near the origin and infinity according to
\begin{equation} \label{synthanal}
|\widehat{\phi}^{\, \prime}(\xi)| , \, |\widehat{\psi}^{\, \prime}(\xi)| \, \lesssim
\begin{cases}
|\xi|^\eps , & |\xi| \leq 1 , \\
|\xi|^{-\eps-5/2} , & |\xi| \geq 1 ,
\end{cases}
\end{equation}
for some $\eps>0$.

If $M_1(\psi,\phi) < 1$ and $M_\infty(\psi,\phi) < 1$, then $ST$ extends from $L^2$ to a bounded, linear and bijective operator on $H^1$, on $L^p$ for $p \in (1,\infty)$, and on $\BMO$.
\end{theorem}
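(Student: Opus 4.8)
The plan is to realize the defect operator $\Id - ST$ as a Calder\'{o}n--Zygmund operator with explicitly controlled constants, and then to recover bijectivity of $ST$ on each space by a Neumann series, viewing $ST$ as a small perturbation of the identity. First I would record that on $L^2$ the operator $ST$ is already well defined and bounded: the regularity hypotheses $\widehat{\psi},\widehat{\phi}\in W^{3,2}$, together with the discrete Calder\'{o}n-type structure, give the almost-tight frame bounds, so $\Id-ST$ is a bounded operator there. The whole task is then to extend $\Id-ST$ off $L^2$ with quantitative norm control.

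For the Calder\'{o}n--Zygmund identification, the frame operator has off-diagonal integral kernel $K(x,y)=B\sum_{j,k}\psi_{j,k}(x)\overline{\phi_{j,k}(y)}$, and Theorem~\ref{frameCZOfreq} supplies explicit bounds, computed entirely in the frequency domain, on the size and H\"{o}rmander-type smoothness constants of the kernel of $\Id-ST$. This is precisely where the decay hypothesis \eqref{synthanal} and the weighted regularity $X\widehat{\psi},X\widehat{\phi}\in W^{3,1}\cap W^{3,2}$ are consumed, the three derivatives and the $X$-weights being exactly what the frequency-side kernel estimates demand. Separately, the two vanishing conditions $\widehat{\psi}(0)=\widehat{\phi}(0)=0$ say that $\psi$ and $\phi$ have mean zero, which forces $\langle 1,\phi_{j,k}\rangle=0$ and hence $ST(1)=0$, and symmetrically $(ST)^*(1)=0$; thus $(\Id-ST)(1)$ and $(\Id-ST)^*(1)$ are constant, i.e.\ zero modulo constants, which are the cancellation conditions a Calder\'{o}n--Zygmund operator must satisfy to map $H^1\to H^1$ and $\BMO\to\BMO$.

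With the Calder\'{o}n--Zygmund constants in hand, I would apply Proposition~\ref{CZatomicbound} to convert them (together with the $L^2$ norm) into explicit operator-norm bounds for $\Id-ST$ on $H^1$ and on $\BMO$. I expect these bounds to be precisely the quantities $M_1(\psi,\phi)$ and $M_\infty(\psi,\phi)$ defined in Section~\ref{mainproofs}, the subscripts $1$ and $\infty$ signalling the $H^1\approx L^1$ and $\BMO\approx L^\infty$ endpoints. The hypotheses $M_1<1$ and $M_\infty<1$ then read $\norm{\Id-ST}_{H^1\to H^1}<1$ and $\norm{\Id-ST}_{\BMO\to\BMO}<1$, so the Neumann series $\sum_{n\geq 0}(\Id-ST)^n$ converges in operator norm on each of $H^1$ and $\BMO$ and furnishes a two-sided inverse; hence $ST$ is bijective on both. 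For $L^p$ with $1<p<\infty$ I would interpolate: by the Fefferman--Stein complex interpolation $[H^1,\BMO]_\theta=L^p$ with $1/p=1-\theta$, the defect obeys $\norm{\Id-ST}_{L^p\to L^p}\leq M_1^{1-\theta}M_\infty^{\theta}<1$, so the same Neumann series inverts $ST$ on $L^p$.

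The remaining care is \emph{consistency}: since $ST$ is initially defined only on $L^2$, I must verify that its extensions to $H^1$, $L^p$, and $\BMO$ agree on overlaps, via density of $L^2\cap X$ in each space $X$ and $L^2$-convergence, and that the Neumann inverses on the several spaces are mutually compatible, so that ``bijective'' is unambiguous. Granting Theorem~\ref{frameCZOfreq} and Proposition~\ref{CZatomicbound}, the genuine obstacle is the bookkeeping at the $H^1$ and $\BMO$ endpoints---checking the cancellation conditions with enough rigor that $\Id-ST$ maps each endpoint space boundedly into itself with the asserted norm, and that the resulting inverse stays within the space---rather than the $L^p$ step, which follows softly by interpolation once the two endpoints are secured.
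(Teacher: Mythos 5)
Your overall architecture (CZ bounds, then Neumann series, then interpolation) matches the paper's, but there are two genuine gaps, and the first one is the heart of the matter. You propose to treat $\Id - ST$ directly as a Calder\'{o}n--Zygmund operator, with kernel $-K_{\psi,\phi}$ off the diagonal, and you ``expect'' the resulting bounds from Proposition~\ref{CZatomicbound} to be precisely $M_1(\psi,\phi)$ and $M_\infty(\psi,\phi)$. They are not. The paper defines $M_p$ in \eqref{eq-Mdef} by way of an \emph{auxiliary pair} $(\psi_*,\phi_*)$ satisfying the same hypotheses and giving perfect reconstruction $S_{\psi_*}T_{\phi_*}=\Id$ on $L^2$, together with the decomposition \eqref{eq-decomp}: $ST-\Id = S_{\psi-\psi_*}T_\phi + S_{\psi_*}T_{\phi-\phi_*}$. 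Each summand is itself a wavelet frame operator whose synthesizer (resp.\ analyzer) is a small perturbation, so Theorem~\ref{frameCZOfreq} applied to the pairs $(\psi-\psi_*,\phi)$ and $(\psi_*,\phi-\phi_*)$ produces \emph{small} CZ constants, and Proposition~\ref{bounded} then gives $\norm{ST-\Id}_{H^1\to H^1} \leq N_1(\psi-\psi_*,\phi,\zeta)+N_1(\psi_*,\phi-\phi_*,\eta)$, whose infimum over $\zeta,\eta$ is exactly $M_1$. Your route instead yields a bound of the form $2\sqrt{\zeta}\,\norm{\Id-ST}_{L^2\to L^2} + D(\zeta)\,C_3(\psi,\phi)$, where $C_3(\psi,\phi)$ is the CZ constant of the \emph{full} frame kernel. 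That quantity is not $M_1$; it is typically far larger (in the Mexican hat example of Section~\ref{mexicanexample}, the perturbation constants are of order $10^{-2}$ while the full-kernel constants are not small), and---decisively---it cannot be estimated by the hypothesis $M_1<1$. So as written, your argument never uses the assumption the theorem gives you. The splitting through a perfect-reconstruction pair is the key idea, and it is absent from your proposal. (A smaller discrepancy: the cancellation condition the paper actually verifies, Assumption~3 via Proposition~\ref{frameCZOintzero}, concerns preservation of vanishing integrals of compactly supported $L^2$ functions, not the $T(1)=0$ statement you sketch.)

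The $L^p$ step also fails as stated. You claim $\norm{\Id-ST}_{L^p\to L^p} \leq M_1^{1-\theta} M_\infty^{\theta} < 1$ via $[H^1,\BMO]_\theta = L^p$. But that identification of spaces holds only up to norm-equivalence constants, so the clean multiplicative inequality with constant $1$ is not available; once the equivalence constants enter, the $L^p$ bound on the defect can exceed $1$. The paper is explicit that this obstruction is real: in the Mexican hat example one cannot show $M_p<1$ for $p$ near $1$ (see the ``Final comment on $L^p$'' in Section~\ref{mexicanexample}), so no Neumann series can be run directly on $L^p$. The paper's proof circumvents this by first establishing bijectivity on $H^1$, $L^2$ and $\BMO$, and then interpolating the two \emph{bounded} operators $ST$ and $(ST)^{-1}$ separately (between $H^1$ and $L^2$ for $1<p<2$, and between $L^2$ and $\BMO$ for $2<p<\infty$), checking by density of $L^p\cap L^2$ that the extensions remain mutual inverses. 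This is where the consistency point you raise is genuinely needed---the paper verifies that the $H^1$ and $L^2$ Neumann inverses agree on the intersection by extracting a pointwise convergent subsequence of partial sums---but consistency alone does not rescue the defect-interpolation argument; the route through interpolating the inverse is what makes the $L^p$ conclusion legitimate.
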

The proof, in Section~\ref{mainproofs}, involves a quantitative perturbation estimate for wavelet frames: we  show $ST$ lies within distance $1$ of the identity operator on $H^1$ and hence is invertible there. Clearly this approach depends sensitively on the choice of norm for $H^1$. We employ the $L^2$-atomic norm, as defined in the following section. Of course, after the frame operator has been proved bijective, it remains bijective under any equivalent norm.

The constants $M_1$ and $M_\infty$ in Theorem~\ref{invertible} depend on two types of information: (i) Calderon--Zygmund bounds on the frame operator, which we control with frequency-domain estimates on the analyzer and synthesizer, and (ii) $L^2$-norm bounds on the frame operator, which we obtain in practice from Daubechies-type frame conditions. (The original Daubechies condition can be found in \cite{D92}. A later variant is due to Casazza and Christensen \cite{CC01}.) The precise dependence of $M_1$ and $M_\infty$ on these quantities is explained in the definitions in Section~\ref{mainproofs}, and then revisited in a specific situation in Section~\ref{mexicanexample}, when we treat the Mexican hat.

Next we give a strong answer to Meyer's completeness question about non-orthogonal wavelets, for the class of wavelets in our main theorem.
\begin{corollary}[Surjectivity of synthesis] \label{cor-expansions}
Assume all the hypotheses of Theorem~\ref{invertible} hold.

Then every function $f$ in $H^1$, or $\BMO$, or $L^p$ for $1<p<\infty$, can be expressed as a norm convergent wavelet series of the form $f=\sum_{j,k} c_{j,k} \psi_{j,k}$.
\end{corollary}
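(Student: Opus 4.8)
The plan is to read the corollary directly off the bijectivity already established in Theorem~\ref{invertible}, and then to verify convergence space by space. Write $Y$ for any one of $H^1$, $\BMO$, or $L^p$ with $1<p<\infty$. By Theorem~\ref{invertible} the frame operator $ST$ is a bounded bijection of $Y$ onto itself, and in particular it is surjective. So given $f \in Y$ I would choose $g = (ST)^{-1} f \in Y$ and unwind the definition of the frame operator to obtain
\[
f = ST(g) = B \sum_{j,k \in \Z} \langle g , \phi_{j,k} \rangle_{L^2} \, \psi_{j,k} ,
\]
which is a wavelet series $f = \sum_{j,k} c_{j,k} \psi_{j,k}$ with coefficients $c_{j,k} = B \langle g , \phi_{j,k} \rangle_{L^2}$. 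Everything then reduces to showing that this series converges in the norm of $Y$.

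For $L^p$ and $H^1$ I would obtain the norm convergence from the construction behind Theorem~\ref{invertible}: there the synthesis operator $S_\psi$ is shown to be bounded from the relevant coefficient space into $Y$, \emph{and} to be realized as the $Y$-norm limit of its partial sums, so that $S_\psi(c) = \lim_{F} \sum_{(j,k) \in F} c_{j,k} \psi_{j,k}$ in $Y$ as the finite index set $F$ increases to $\Z \times \Z$, for every admissible sequence $c$. Taking $c = T_\phi(g)$ then delivers norm convergence of the expansion of $f$ in $L^p$ and in $H^1$ at once.

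The delicate --- and I expect the hardest --- case is $\BMO$. Here the plan is to control the tails of the synthesis series quantitatively: writing the tail over the complement of a finite set $F$ as $S_\psi$ applied to the truncated coefficient sequence, I would invoke the $\BMO$-boundedness of $S_\psi$ supplied by the Calder\'{o}n--Zygmund estimates of Theorem~\ref{frameCZOfreq} together with Proposition~\ref{CZatomicbound}, reducing matters to showing that the truncations of $c = T_\phi(g)$ converge to $c$ in the sequence norm governing $S_\psi$ on $\BMO$. Establishing this last convergence is the crux, and it is where the argument must exploit the specific $L^2$-atomic structure used for $H^1$ and its duality with $\BMO$; once the truncated coefficient sequences are shown to approximate $c$ in the controlling norm, the uniform bound on $S_\psi$ forces $\norm{f - \sum_{(j,k)\in F} c_{j,k}\psi_{j,k}}_{\BMO} \to 0$, which completes the proof.
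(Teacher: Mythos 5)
Your first step is fine and matches the paper: set $g=(ST)^{-1}f$ using the bijectivity from Theorem~\ref{invertible}. But the crux of your argument rests on a claim that is simply not available: you assert that ``the construction behind Theorem~\ref{invertible}'' shows the synthesis operator $S_\psi$ is bounded from a coefficient space into $Y$ and is realized as the $Y$-norm limit of its partial sums. Theorem~\ref{invertible} and its proof establish nothing about $S_\psi$ or $T_\phi$ separately; they treat only the \emph{composition} $ST$, which is extended as a single Calder\'{o}n--Zygmund operator. Indeed, the paper's own proof of the corollary flags exactly this as the gap: the extension of $ST$ to $H^1$, $L^p$, $\BMO$ is defined by the atomic/duality/interpolation machinery, not by the series formula, so one may not ``unwind the definition'' and conclude $f=B\sum_{j,k}\langle g,\phi_{j,k}\rangle_{L^2}\,\psi_{j,k}$ in the norm of $Y$ (for $g\in\BMO$ even the pairing needs reinterpretation). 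The missing ingredient, which the paper imports from Frazier and Jawerth \cite[Theorems 3.5, 3.7, and p.\,81]{FJ90}, is the separate boundedness $T:Y\to\dot{f}^{0,2}_p$ and $S:\dot{f}^{0,2}_p\to Y$ on the Triebel--Lizorkin sequence spaces (with the $H^1$ and $\BMO$ endpoints), after which $f=S\big(T(ST)^{-1}f\big)$ gives the unconditionally convergent expansion. Without some such external input your argument is circular for all three spaces, not only for $\BMO$.

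Your proposed repair for $\BMO$ also misuses the paper's tools: Theorem~\ref{frameCZOfreq} and Proposition~\ref{CZatomicbound} apply to an $L^2\to L^2$ operator with a Calder\'{o}n--Zygmund kernel, i.e.\ to $ST$, and yield no bound whatsoever on $S_\psi$ acting on sequences. Moreover, even granting boundedness of $S_\psi$ on the appropriate endpoint sequence space, your plan to show that finite truncations of $c=T_\phi(g)$ converge to $c$ in that controlling norm cannot work in general: at the $\BMO$ endpoint finitely supported sequences are not dense (the same obstruction as the non-density of nice functions in $\BMO$), so the tail-control scheme stalls precisely where you predicted difficulty. The correct route is the one the paper takes --- invoke the phi-transform boundedness of analysis and synthesis separately and let the Frazier--Jawerth theory supply the convergence of the series.
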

We apply the corollary in Section~\ref{mexicanexample} to the Mexican hat example. There we construct an analyzer $\phi$ which, when combined with the Mexican hat synthesizer, generates a frame operator lying less than $10^{-3}$ from the identity on $L^2$ and less than about $10^{-2}$ from the identity in $H^1$. Thus the frame operator is a bijection, and so the Mexican hat wavelet can reproduce each function in $H^1, L^p$ and $\BMO$.

Note that wavelet systems cannot reproduce $L^1$-functions with nonzero integral, because the wavelet $\psi$ and all its dilates have integral zero. Interestingly, reproduction in $L^p$ with $0 < p < 1$ can be achieved by using only the small scales (terms with $j>0$), as shown in \cite[\S4.4]{L08}.

Open problems that a motivated reader might pursue include higher dimensional analogues of Theorem~\ref{invertible}, and the case of $H^p$ spaces for $p<1$.

\subsection*{Relevant literature}
Motivating literature for our approach includes: the $L^2$ frame work of Chui and Shi \cite{CS93}, the phi-transform theory of band limited dual frames for Triebel--Lizorkin spaces by Frazier and Jawerth \cite{FJ85,FJ90,FJW91} with recent extension to irregular translations by Cabrelli, Molter and Romero \cite{CMR11}, the co-orbit theory of Feichtinger and Gr\"{o}chenig \cite{FG89a,FG89b,G91}, and the non-band limited but oversampled ``approximate duals'' of Gilbert, Han, Hogan, Lakey, Weiland and Weiss \cite{GHHLWW02}. Note that Gilbert \emph{et al.} treated translations in a lattice for the full scale of Triebel--Lizorkin spaces. Their work was extended to irregular translation grids (still oversampled) by Li and Sun \cite{LS12}, in the restricted case of $L^p,1<p<\infty$.

Underlying the approaches of Feichtinger and Gr\"{o}chenig and Gilbert \emph{et al.} (and Frazier and Jawerth \cite[\S4]{FJ90}, Bui and Paluszy\'{n}ski \cite{BP04}, and Li and Sun \cite{LS12}) is the fact that a sufficiently dense oversampling of translations and dilations must yield approximate reconstruction in any reasonable function space, for any reasonable analyzer and synthesizer. The reason is that by the Calder\'{o}n reproducing formula, perfect reconstruction holds in the limit of infinite oversampling (the case of continuous parameter translations and dilations). Unfortunately, those papers do not specify the degree of oversampling required to guarantee their conclusions. In other words, the dilation factor $A$ must lie sufficiently close to $1$ and the translation step $B$ sufficiently close to $0$, but \emph{how} close would constitute ``sufficiently close'' is simply not specified.

Theorem~\ref{invertible} avoids oversampling completely, by constructing a criterion that is usable for fixed values of $A$ and $B$. Further, the theorem relaxes the band limitation of Frazier and Jawerth, so that one may treat non-band limited examples such as the Mexican hat. The cost of that relaxation is merely that the frame operator yields approximate rather than perfect reconstruction: our frame operator is bijective rather than equalling the identity.

Let us now contrast our work with the established theory of \emph{orthonormal} wavelets. In that setting, the frame operator gives perfect reconstruction on $L^2$ and is bounded on other function spaces such as $H^1$ and $L^p$, assuming reasonable smoothness and decay of the wavelet. The frame operator therefore equals the identity on those spaces, simply by density of $L^2$, and thus one obtains wavelet expansions in whole families of function spaces \cite{HW96,M92}. In this current paper, we do not assume orthonormality or perfect reconstruction on $L^2$. We assume instead that the difference between the frame operator and the identity has small norm in $L^2$. The challenge in Theorem~\ref{invertible} is to show (under suitable hypotheses) that the norm of this difference remains small when considered in the Hardy space and $\BMO$, so that the frame operator is bijective on those spaces.

Our work is unrelated also to multiresolution analyses that generate pairs of perfectly reconstructing analyzers and synthesizers, such as \cite{CHS02,DHRS03,RS98}. Such methods do not apply to the Mexican hat wavelet, for example, because it satisfies no scaling or refinement equation. Theorem~\ref{invertible} takes a different approach: it finds sufficient conditions for a given analyzer and synthesizer to generate a frame-like structure.

\section{\bf Definitions}
\label{definitions}

We employ the Fourier transform with $2\pi$ in the exponent,
\[
\widehat{f}(\xi) = \int_\R f(x) e^{-2\pi i \xi x} \, dx , \qquad \xi \in \R ,
\]
so that Parseval's identity says $\langle f ,g \rangle_{L^2} = \langle
\widehat{f} , \widehat{g} \, \rangle_{L^2}$.

Next we specify the norms to be used on Hardy space and $\BMO$, and define the Calder\'{o}n--Zygmund operators that will act upon those spaces. As always, we write $L^p=L^p(\R)$.

\subsection*{Atomic Hardy space, and $\BMO$} We work with the Hardy space $H^1=H^1(\R)$ defined by $L^2$ atoms, as follows. Call $a \in L^2$ an \emph{atom} if $a$ is supported in some bounded interval $I$ with $\lVert a \rVert_{L^2} \leq |I|^{-1/2}$ and $\int_I a(x) \, dx = 0$. Notice $\lVert a \rVert_{L^1} \leq 1$. Define
\begin{align*}
H^1 & = \big\{ f \in L^1 : f=\sum_{j=1}^\infty \lambda_j a_j \ \text{for some atoms $a_j$ and} \\
&  \hspace*{5cm} \text{constants $\lambda_j \in \C$ satisfying} \ \sum_{j=1}^\infty |\lambda_j|<\infty  \big\} .
\end{align*}
(Note the series $f=\sum_j \lambda_j a_j$ converges in $L^1$.) Then $H^1$ is a Banach space under the atomic norm
\[
\lVert f \rVert_{H^1} = \inf \big\{ \sum_{j=1}^\infty |\lambda_j|<\infty :  f=\sum_{j=1}^\infty \lambda_j a_j \big\} ,
\]
and it imbeds continuously into $L^1$ with
\[
\lVert f \rVert_{L^1} \leq \lVert f \rVert_{H^1} .
\]

The dual of the Hardy space is $(H^1)^* = BMO$, the space of functions with bounded mean oscillation. We employ the dual space norm on $\BMO$:
%Define $\BMO = (H^1)^*$, which is a Banach space under the dual norm:
\[
\lVert g \rVert_\BMO = \sup \{ |g(f)| : \text{$f \in H^1$ with $\lVert f \rVert_{H^1} = 1$} \} .
\]

\subsection*{Calder\'{o}n--Zygmund operators (CZOs)}
\begin{defn} \rm
Call $Z$ a \emph{generalized Calder\'{o}n--Zygmund operator} if Assumptions 1a, 1b, 1c, 2 and 3 below all hold, for some constants $C_1,C_2,C_3>0$.

Suppose $K(x,y)$ is a measurable, complex valued function on $\R \times \R$.

\emph{Assumption 1a.}
\begin{equation} \label{CZ1}
|K(x,y)| \leq C_1 \frac{1}{|x-y|} \qquad \text{whenever $x \neq y$.}
\end{equation}

\emph{Assumption 1b.}
\begin{equation} \label{CZ2}
|K(x_0,y)-K(x,y)| \leq C_2 \frac{|x-x_0|}{|x-y|^2} \qquad \text{whenever $|x-x_0| \leq \frac{1}{2} |x-y|$.}
\end{equation}

\emph{Assumption 1c.}
\begin{equation} \label{CZ3}
|K(x,y_0)-K(x,y)| \leq C_3 \frac{|y_0-y|}{|x-y|^2} \qquad \text{whenever $|y_0-y| \leq \frac{1}{2} |x-y|$.}
\end{equation}

\emph{Assumption 2.} $Z : L^2 \to L^2$ is a bounded linear operator such that if $f \in L^2$ has compact support then
\begin{equation} \label{assump2}
(Zf)(x) = \int_\R K(x,y) f(y) \, dy \qquad \text{whenever $x \notin \supp(f)$.}
\end{equation}
\emph{Assumption 3.} If $f \in L^2$ has compact support and integral zero, then $Zf \in L^1 \cap L^2$ has $\int_\R (Zf)(x) \, dx = 0$, and $Z^* f \in L^1 \cap L^2$ has $\int_\R (Z^* f)(x) \, dx = 0$.
\end{defn}
Note that the integral representation \eqref{assump2} need not hold when $x \in \supp(f)$, for example if $Z$ is the identity operator and $K \equiv 0$.

\noindent \emph{Remark.} This definition of generalized CZO follows the treatment by Hern\'{a}ndez and Weiss \cite{HW96}. A more general distributional definition was developed earlier by Coifman and Meyer \cite{MC97}; see also Grafakos \cite{G08b}.

\subsection*{A function used later}
Define
\[
D(\zeta) = 7 \sqrt{\frac{\zeta^2(\zeta^2+3)}{(\zeta^2-1)^3}} , \qquad \zeta > 1 ,
\]
so that $D(\zeta) \sim 7/\zeta$ as $\zeta \to \infty$.

\section{\bf CZOs on atomic Hardy space}
\label{boundedCZO}

Our wavelet results depend on boundedness of generalized Calder\'{o}n--Zygmund operators.
%Later we will use only the $H^1$ and $\BMO$ estimates from the proposition below, but we state $L^p$ estimates
%too because they might have independent interest.
Boundedness of CZOs on $H^1,L^p$ and $\BMO$ is well known \cite{MC97,G08b,HW96}, but boundedness is not enough for our purposes. We need a certain operator norm less than $1$, in order to prove invertibility of the wavelet frame operator by using a Neumann series. Thus we must have \emph{explicitly computable} norm bounds on CZOs. The point of the next proposition is to prove such explicit bounds.

The constant used below is
\[
c(p) = \Big( 2^{2p/(p+1)} \frac{p(p+1)}{p-1} \Big)^{\! (2-p)(p+1)/2p} \sim \frac{4}{p-1} \qquad \text{as $p \searrow 1$.}
\]
Notice $c(2)=1$. Write $p^\prime$ for the conjugate exponent: $\frac{1}{p}+\frac{1}{p^\prime}=1$.

\begin{proposition} \label{CZatomicbound}
Assume $Z : L^2 \to L^2$ is a generalized Calderon--Zygmund operator with constants $C_1,C_2,C_3$. Fix $\zeta \geq 3$.

Then $Z$ has a bounded linear extension to $H^1$, to $L^p$ for each $p \in (1,\infty)$, and to $\BMO$, with norms:
\begin{align}
\lVert Z \rVert_{H^1 \to H^1} & \leq 2\sqrt{\zeta} \, \lVert Z \rVert_{L^2 \to L^2} + D(\zeta) C_3 , \label{eq-i} \\
\lVert Z \rVert_{L^p \to L^p} & \leq
c(p) \big( \sqrt{32 \zeta} \, \lVert Z \rVert_{L^2 \to L^2} + \frac{8\zeta}{\zeta^2-1} C_3 \big)^{(2/p) - 1} \lVert Z \rVert_{L^2 \to L^2}^{2 - (2/p)}  \qquad \text{when $1 < p \leq 2$,} \label{eq-ii} \\
\lVert Z \rVert_{L^p \to L^p} & \leq
c(p^\prime) \big( \sqrt{32 \zeta} \, \lVert Z \rVert_{L^2 \to L^2} + \frac{8\zeta}{\zeta^2-1} C_2 \big)^{1-(2/p)} \lVert Z \rVert_{L^2 \to L^2}^{2/p} \qquad \text{when $2 \leq p < \infty$,} \label{eq-iii}
\\
\lVert Z \rVert_{\BMO \to \BMO} & \leq 2\sqrt{\zeta} \, \lVert Z \rVert_{L^2 \to L^2} + D(\zeta) C_2 . \label{eq-iv}
\end{align}
Equality holds for $p=2$.
\end{proposition}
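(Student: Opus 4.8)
The plan is to establish the two endpoint estimates \eqref{eq-i} and \eqref{eq-iv} directly, and then to obtain the Lebesgue-space bounds \eqref{eq-ii} and \eqref{eq-iii} from the $L^2$ bound together with a weak-type $(1,1)$ estimate, via interpolation and duality; the case $p=2$ is the trivial identity $\norm{Z}_{L^2\to L^2}=\norm{Z}_{L^2\to L^2}$, which is why equality holds there. For \eqref{eq-i} I would first reduce to a uniform bound $\norm{Za}_{H^1}\le 2\sqrt{\zeta}\,\norm{Z}_{L^2\to L^2}+D(\zeta)C_3$ over all $L^2$-atoms $a$. Granting such a bound, one defines $Zf=\sum_j\lambda_j Za_j$ whenever $f=\sum_j\lambda_j a_j$; the uniform estimate forces $H^1$-convergence of this series, independence of the atomic representation, and agreement with the given $L^2$-operator on $L^2\cap H^1$. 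Here the first clause of Assumption~3 is indispensable, since it guarantees that $Za$ again has integral zero and therefore lies in $H^1$.

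To bound $\norm{Za}_{H^1}$, fix an atom $a$ supported on an interval $I$ with center $x_I$, and let $I^*$ be the concentric interval of length $\zeta\abs{I}$. I would split $Za=\widetilde{g}+h$, where $\widetilde{g}$ is $(Za)\charfn_{I^*}$ with its mean over $I^*$ subtracted off. The local piece $\widetilde{g}$ is supported on $I^*$ and has integral zero, so it is a multiple of an atom; the $L^2$-boundedness of $Z$, Cauchy--Schwarz, and the mean subtraction give $\norm{\widetilde{g}}_{H^1}\le 2\sqrt{\zeta}\,\norm{Z}_{L^2\to L^2}$. The tail $h$ is the delicate part. For $x\notin I^*$ the representation \eqref{assump2} applies, and since $\int a=0$ I would write $(Za)(x)=\int_I\big[K(x,y)-K(x,x_I)\big]a(y)\,dy$ and invoke Assumption~1c, i.e.\ \eqref{CZ3}; the hypothesis $\zeta\ge 3$ is exactly what guarantees $\abs{y-x_I}\le\tfrac12\abs{x-y}$ for $y\in I$ and $x\notin I^*$, licensing the bound $\abs{K(x,y)-K(x,x_I)}\le C_3\abs{y-x_I}\,\abs{x-y}^{-2}$. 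Because $h$ is not compactly supported, an $L^1$ tail estimate is not enough; instead I would treat $h$ as a molecule centered at $x_I$ and apply a quantitative molecular $H^1$ estimate, reducing matters to computing $\norm{h}_{L^2}$ and $\norm{\abs{\,\cdot-x_I}\,h}_{L^2}$. Evaluating the tail integrals $\int_{\R\setminus I^*}\abs{x-y}^{-4}\,dx$ \emph{exactly} --- in particular using the identity
\[
\frac{1}{(\zeta-1)^3}+\frac{1}{(\zeta+1)^3}=\frac{2\zeta(\zeta^2+3)}{(\zeta^2-1)^3}
\]
--- produces the rational function of $\zeta$ under the square root in $D(\zeta)$, and combining this with the molecular constant yields $\norm{h}_{H^1}\le D(\zeta)C_3$. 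Adding the two contributions gives \eqref{eq-i}.

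The $\BMO$ bound \eqref{eq-iv} I would obtain purely by duality. Since $\BMO=(H^1)^*$, the action of $Z$ on $\BMO$ is adjoint to that of $Z^*$ on $H^1$, so $\norm{Z}_{\BMO\to\BMO}=\norm{Z^*}_{H^1\to H^1}$. The adjoint $Z^*$ is itself a generalized CZO, with kernel $\overline{K(y,x)}$, the same $L^2$ norm, and --- crucially --- its smoothness-in-the-second-variable constant equals the original smoothness-in-the-first-variable constant $C_2$; the second clause of Assumption~3 supplies the cancellation needed for $Z^*$. Applying \eqref{eq-i} to $Z^*$ therefore gives \eqref{eq-iv}, with $C_2$ in place of $C_3$. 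For the Lebesgue-space bounds I would not interpolate the $H^1\to H^1$ norm directly; rather, for $1<p\le 2$ I would run a Calder\'{o}n--Zygmund decomposition at height $\lambda$, bounding the good part through the $L^2$ norm and the bad part through the tail estimate above on the complements of the dilated cubes $\zeta Q$, to obtain a weak-type $(1,1)$ bound whose explicit constant is $\sqrt{32\zeta}\,\norm{Z}_{L^2\to L^2}+\tfrac{8\zeta}{\zeta^2-1}C_3$. Marcinkiewicz interpolation against the $L^2$ bound, with the interpolation weight $2-2/p$ falling on $L^2$, then yields \eqref{eq-ii} with $c(p)$ the explicit interpolation constant (note $c(2)=1$). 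Finally \eqref{eq-iii} follows by duality, applying \eqref{eq-ii} to $Z^*$ at the conjugate exponent $p'$, which replaces $C_3$ by $C_2$ and $c(p)$ by $c(p')$.

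The main obstacle is the tail estimate for \eqref{eq-i}: obtaining an \emph{explicit} $H^1$ bound --- not merely an $L^1$ bound --- with the sharp constant $D(\zeta)$. This is what forces the molecular argument, the exact rather than crude evaluation of the tail integrals, and careful bookkeeping to keep $h$ of mean zero throughout. A secondary difficulty, recurring in every estimate, is that all of the constants ($D(\zeta)$, the weak-$(1,1)$ constant, and the Marcinkiewicz constant $c(p)$) must be tracked explicitly, since the whole point of the proposition is to feed computable norm bounds into the Neumann-series argument for invertibility of the frame operator.
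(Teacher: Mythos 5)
Your overall architecture matches the paper's: a uniform $H^1$ bound on atoms driven by Assumption 1c and the geometry forced by $\zeta\geq 3$, extension to $H^1$ via atomic decompositions, a weak-$(1,1)$ bound plus interpolation for $1<p\leq 2$, and duality (with $C_2$ and $C_3$ interchanged for $Z^*$) for \eqref{eq-iii} and \eqref{eq-iv}. However, two of your steps, as described, would not deliver the stated constants, and these are precisely the points the paper works hardest on.

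First, the interpolation. A single application of Marcinkiewicz interpolation between weak-$(1,1)$ and strong-$(2,2)$ cannot yield \eqref{eq-ii} with the proposition's constant $c(p)$: the Marcinkiewicz constant (of the form $2\big(\tfrac{p}{p-1}+\tfrac{p}{2-p}\big)^{1/p}$) blows up as $p\uparrow 2$, because the theorem uses only weak-type information at both endpoints; so the property you note parenthetically, $c(2)=1$ (equivalently, ``equality holds for $p=2$''), is exactly what one-step Marcinkiewicz fails to give. The paper needs a two-step argument (its Appendix Proposition~\ref{interpr}): Marcinkiewicz up to an intermediate exponent $q$ (the harmonic mean of $1$ and $p$), followed by Riesz--Thorin between $L^q$ and $L^2$; the Riesz--Thorin exponent kills the blow-up of the Marcinkiewicz factor as $p\to 2$ and produces the displayed $c(p)$. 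Without this (or an equivalent device), your $L^p$ bound \eqref{eq-ii} is strictly weaker than claimed near $p=2$.

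Second, the tail of the atom bound. You reduce to a ``quantitative molecular $H^1$ estimate'' in terms of $\lVert h\rVert_{L^2}$ and $\lVert |\cdot - x_I|\,h\rVert_{L^2}$, but no off-the-shelf Coifman--Weiss molecular embedding comes with explicit constants; proving one amounts to redoing a decomposition into atoms supported on dilated intervals, which is the content of the paper's Lemma~\ref{moleculesatoms} --- and the paper deliberately \emph{avoids} the weighted-$L^2$ molecular norm. Instead it proves the pointwise bound $|(Za)(x)|\leq C_4|I|/(x-y_0)^2$ off $\zeta I$ (Lemma~\ref{atomsmolecules}, by Cauchy--Schwarz in the $y$-variable against the atom, which is where the rational function $\zeta^2(\zeta^2+3)/(\zeta^2-1)^3$ actually arises), applies the molecular lemma to all of $Za$ (your local piece $\widetilde g$ reappears there as the $j=0$ term, giving $2\sqrt{\zeta}\lVert Z\rVert_{L^2\to L^2}$), and decomposes the tail into atoms on annuli $[-\gamma^j R,\gamma^j R]$ with the mean corrections handled by an Abel-summation telescoping series, optimizing over $\gamma$ (they take $\gamma=4$; dyadic scaling is worse). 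The factor $7$ in $D(\zeta)$ comes from that optimization. Your route would certainly produce \emph{some} explicit constant, but the claim that it comes out to exactly $D(\zeta)C_3$ is asserted, not shown, and your tail-integral identity produces $\zeta(\zeta^2+3)/(\zeta^2-1)^3$ rather than the paper's $\zeta^2(\zeta^2+3)/(\zeta^2-1)^3$, so the two computations do not visibly agree. A smaller gap of the same kind: well-definedness of $Zf:=\sum_j\lambda_j Za_j$ independently of the atomic decomposition does not follow from the uniform atom bound alone; the paper obtains it from the weak-$(1,1)$ bound (the series equals the weak-$L^1$ extension a.e.), a tool you construct but never invoke for this purpose.
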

The purpose of including the free parameter $\zeta$ in the proposition is to improve the bounds by allowing  trade offs between the size of $\lVert Z \rVert_{L^2 \to L^2}$ and of $C_2$ and $C_3$. For instance, $\zeta=50$ is a nearly-optimal choice for the Mexican hat example in Section~\ref{mexicanexample}, whereas choosing $\zeta=3$ would give results an order of magnitude worse.

\medskip
In order to prove Proposition~\ref{CZatomicbound} at the end of this section, we will find explicit bounds saying (roughly) that generalized CZOs map atoms to molecules, and that molecules decompose into atoms, so that molecules belong to the Hardy space. The next two lemmas phrase these known results in a way that avoids the molecular norm, so as to prove the proposition more directly and obtain computable constants. Note that here, and in later proofs, we must not switch between different types of atoms, because doing so could introduce large additional constants into the norm estimates. We must consistently use $L^2$ atoms.

The first lemma says that $Z$ maps each atom to a function that decays like the square of the distance from the support of the atom.
\begin{lemma}[``CZOs map atoms to molecules''] \label{atomsmolecules}
Let $\zeta \geq 3$. If Assumptions 1a, 1c and 2 hold and $a(x)$ is an atom supported in a bounded interval $I$ centered at $y_0 \in \R$, then $Za \in L^1 \cap L^2$ with
\[
\big| (Za)(x) \big| \leq \frac{C_4 |I|}{(x-y_0)^2} , \qquad x \in (\zeta I)^c ,
\]
where the constant is
\[
C_4 =  \frac{1}{2\sqrt{3}} \sqrt{\frac{\zeta^4(\zeta^2+3)}{(\zeta^2-1)^3}} \, C_3 .
\]
\end{lemma}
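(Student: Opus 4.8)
The plan is to prove the pointwise decay estimate for $Za$ by inserting the zero-integral condition of the atom and then applying the kernel smoothness Assumption~1c. Since $a$ is supported in $I$ with $\int_I a = 0$, for $x \notin \supp(a)$ the integral representation \eqref{assump2} gives $(Za)(x) = \int_I K(x,y) a(y)\, dy$. Because $\int_I a(y)\, dy = 0$, I may subtract off the value $K(x,y_0)$ at the center $y_0$ of $I$ at no cost, writing
\[
(Za)(x) = \int_I \big( K(x,y) - K(x,y_0) \big) a(y) \, dy .
\]
This is the standard ``cancellation trick'' that converts the slow $1/|x-y|$ decay of $K$ into the faster decay furnished by the H\"{o}lder-type regularity in the second variable.

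First I would verify the hypothesis needed to apply \eqref{CZ3}: for $x \in (\zeta I)^c$ and $y \in I$, I need $|y_0 - y| \leq \tfrac{1}{2} |x - y|$. Since $y, y_0 \in I$ we have $|y_0 - y| \leq |I|$, while $x$ lying outside $\zeta I$ forces $|x - y|$ to be comparable to the distance from $x$ to the interval, which is at least of order $\zeta |I|$; for $\zeta \geq 3$ the factor $\tfrac{1}{2}$ is comfortably satisfied. Granting this, Assumption~1c yields
\[
\big| K(x,y) - K(x,y_0) \big| \leq C_3 \frac{|y_0 - y|}{|x-y|^2} \leq C_3 \frac{|I|/2}{|x-y|^2} .
\]
Then I would estimate $|(Za)(x)| \leq \int_I |K(x,y)-K(x,y_0)| \, |a(y)| \, dy$, bound $|x-y|$ below in terms of $x - y_0$ and $|I|$ (again using $x \in (\zeta I)^c$), pull out the resulting power of $|x - y_0|^{-2}$, and control $\int_I |a| = \lVert a \rVert_{L^1} \leq 1$ using the atom normalization. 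The membership $Za \in L^1 \cap L^2$ follows from Assumption~3 (applied to the zero-integral function $a$), so that part requires no separate work.

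The main obstacle, and the only genuinely delicate step, is bookkeeping the geometric constants so that they collapse into the stated $C_4 = \tfrac{1}{2\sqrt{3}}\sqrt{\zeta^4(\zeta^2+3)/(\zeta^2-1)^3}\,C_3$. The issue is that $|x - y|$ and $|x - y_0|$ differ by up to $|I|/2$, and one must convert the bound phrased in $|x-y|$ into a clean bound in $(x - y_0)^2$ uniformly over $y \in I$ and $x \in (\zeta I)^c$; the worst case is when $x$ sits just outside $\zeta I$, which is exactly where the ratio $|x-y_0|/|x-y|$ is largest. Tracking this worst case, rather than crude estimates, is what produces the specific algebraic factor $\sqrt{\zeta^4(\zeta^2+3)/(\zeta^2-1)^3}$; a lossy argument would give a cruder $\zeta$-dependence and spoil the sharpness that the free parameter $\zeta$ is meant to exploit in Proposition~\ref{CZatomicbound}. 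I would therefore set up the estimate carefully as a function of the normalized position of $x$ relative to $I$ and optimize, so that every constant is explicit and no cancellation is wasted.
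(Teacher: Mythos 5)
Your skeleton matches the paper's proof up to a point: the integral representation from Assumption 2, the subtraction of $K(x,y_0)$ using $\int_I a = 0$, and the verification that $|y_0-y| \leq \tfrac12 |x-y|$ when $y \in I$ and $x \in (\zeta I)^c$ are all exactly as in the paper. But your plan diverges at the decisive step, and the divergence is fatal to the stated constant. You propose to bound the kernel difference by its supremum over $y \in I$ and then invoke the $L^1$ normalization $\lVert a \rVert_{L^1} \leq 1$. The paper instead keeps the factor $|y_0-y|/(x-y)^2$ inside the integral, applies Cauchy--Schwarz against $\lVert a \rVert_{L^2} \leq |I|^{-1/2}$, and evaluates $\int_{-1}^{1} z^2/(w-z)^4 \, dz = \tfrac{2(w^2+3)}{3(w^2-1)^3}$ exactly in the normalized coordinates $z=(y-y_0)/(|I|/2)$, $w=(x-y_0)/(|I|/2)$. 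The stated factor $\tfrac{1}{2\sqrt{3}}\sqrt{\zeta^4(\zeta^2+3)/(\zeta^2-1)^3}$ is the fingerprint of that $L^2$ computation: the $\sqrt{3}$ and the $(\zeta^2+3)$ cannot arise any other way. Your $L^\infty$--$L^1$ route, even with the worst case over the position of $x$ tracked exactly, yields at best $C_4' = \tfrac{\zeta^2}{2(\zeta-1)^2} C_3$, which is strictly larger than the stated $C_4$ for every $\zeta \geq 3$ (about $2.8$ times larger at $\zeta = 3$, and $\sqrt{3}$ times larger as $\zeta \to \infty$). So the "careful bookkeeping" you defer to cannot rescue the plan: no optimization within the $\lVert a \rVert_{L^1} \leq 1$ framework reaches the stated constant, and since the explicit constant is the entire point of the lemma (it feeds into Proposition~\ref{CZatomicbound} and ultimately into the numerical verification for the Mexican hat), the proposal does not prove the statement. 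This is precisely the pitfall the paper warns about: one must consistently exploit the $L^2$ normalization of atoms, not downgrade them to $L^1$-normalized ones.

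There is a second, smaller gap: you deduce $Za \in L^1 \cap L^2$ from Assumption 3, but the lemma hypothesizes only Assumptions 1a, 1c and 2, so Assumption 3 is not available. (This matters structurally: the estimate \eqref{beforeCS} from this proof is reused in Lemma~\ref{weakL1}, where again only 1a, 1c, 2 are assumed.) The paper's route needs no extra hypothesis: $Za \in L^2$ follows from the $L^2$-boundedness in Assumption 2, and then $Za \in L^1$ because it is locally integrable (being in $L^2$) and decays like $(x-y_0)^{-2}$ at infinity by the bound just proved.
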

Here $\zeta I$ denotes the interval having the same center as $I$ and $\zeta$ times the length.
\begin{proof}[Proof of Lemma~\ref{atomsmolecules}]
We adapt parts of the standard proof; see for example \cite[Theorem 5.6.8]{HW96}.

First, $Za \in L^2$ by Assumption 2, since the atom $a$ belongs to $L^2$.

Next, if $x$ lies outside the support $I$ of the atom $a$ then
\begin{align}
(Za)(x)
& = \int_I K(x,y) a(y) \, dy \qquad \text{by Assumption 2} \notag \\
& = \int_I [K(x,y) - K(x,y_0)]a(y) \, dy \label{eq-kerneldiff}
\end{align}
since $\int_I a(y) \, dy = 0$.

Now suppose $y \in I$ and $x \in (\zeta I)^c$. Then
\[
|y_0 - y| \leq \frac{1}{2} |I| \leq \frac{\zeta-1}{4} |I| \leq \frac{1}{2} |x-y| ,
\]
where the middle inequality uses that $\zeta \geq 3$. Assumption 1c then implies with \eqref{eq-kerneldiff} that
\begin{align}
\big| (Za)(x) \big|
& \leq C_3 \int_I \frac{|y_0 - y|}{(x-y)^2} |a(y)| \, dy \label{beforeCS} \\
& \leq C_3 \Big( \int_I \frac{(y_0 - y)^2}{(x-y)^4} \, dy \Big)^{\! 1/2} \lVert a \rVert_{L^2} \qquad \text{by Cauchy--Schwarz} \notag \\
& = C_3 \sqrt{\frac{2}{|I|}} \Big( \int_{-1}^1 \frac{z^2}{(w-z)^4} \, dz \Big)^{\! 1/2} \lVert a \rVert_{L^2} \notag
\end{align}
by a change of variable, where
\[
z = \frac{y-y_0}{|I|/2} \qquad \text{and} \qquad w = \frac{x-y_0}{|I|/2} .
\]
Evaluating the last integral shows that
\begin{align*}
\big| (Za)(x) \big|
& \leq C_3 \sqrt{\frac{2}{|I|}} \sqrt{\frac{2(w^2+3)}{3(w^2-1)^3}} \, \lVert a \rVert_{L^2} \\
& \leq C_3 \frac{2}{|I| \sqrt{3}} \sqrt{\frac{w^4(w^2+3)}{(w^2-1)^3}} \frac{1}{w^2}
\end{align*}
since the atom $a$ has $L^2$-norm at most $|I|^{-1/2}$. Note $|w| \geq \zeta$ (since $x \notin \zeta I$), and that the function $w \mapsto w^4(w^2+3)/(w^2-1)^3$ is decreasing for $w \geq 1$. Hence we may replace $w$ with $\zeta$, inside the square root in the last formula. The lemma now follows by substituting the definition of $w$ into the remaining factor of $1/w^2$.

Notice $Za$ is locally integrable because it belongs to $L^2$, and is globally integrable because it decays like $x^{-2}$ at infinity. Thus $Za \in L^1$.
\end{proof}

\begin{lemma}[``Molecules belong to Hardy space''] \label{moleculesatoms}
Suppose $M \in L^2$ and $I$ is a bounded interval centered at $y_0 \in \R$. Let $C_4>0$ and $\zeta \geq 3$.

If
\begin{equation} \label{decay}
\big| M(x) \big| \leq \frac{C_4 |I|}{(x-y_0)^2} , \qquad x \in (\zeta I)^c ,
\end{equation}
and $\int_\R M(x) \, dx = 0$, then $M \in H^1$ with norm
\[
\lVert M \rVert_{H^1} \leq 2 \sqrt{\zeta |I|} \, \lVert M \rVert_{L^2} + \frac{4}{3\zeta} \big( \sqrt{57} + \sqrt{320/3} \big) C_4 .
\]
\end{lemma}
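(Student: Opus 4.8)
The goal is to show $M \in H^1$ with the stated explicit norm bound. My plan is to decompose $M$ into a piece localized on $\zeta I$ and a tail on $(\zeta I)^c$, turn each into atoms (or a controlled multiple of an atom), and sum the resulting coefficients. The essential constraint throughout is that an atom must have $L^2$ size at most $|J|^{-1/2}$ on its support interval $J$ \emph{and} integral zero; the zero-integral requirement forces me to correct any localized piece of $M$ whose integral is nonzero. So I expect the bookkeeping around cancellation to be the main obstacle, rather than any single estimate.

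First I would handle the \emph{local part} $M \cdot \charfn_{\zeta I}$. Since $M \in L^2$, Cauchy--Schwarz on the interval $\zeta I$ gives $\lVert M \charfn_{\zeta I} \rVert_{L^1} \le |\zeta I|^{1/2} \lVert M \rVert_{L^2} = (\zeta |I|)^{1/2} \lVert M \rVert_{L^2}$. To make this into an atom supported on $\zeta I$ I must subtract its mean; writing $m = \frac{1}{|\zeta I|}\int_{\zeta I} M\,dx$, the function $M\charfn_{\zeta I} - m\charfn_{\zeta I}$ has integral zero and an $L^2$ norm controlled by $\lVert M \rVert_{L^2}$ plus a correction from $m$. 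Dividing by the appropriate normalizing constant produces an atom whose coefficient is bounded by roughly $2(\zeta|I|)^{1/2}\lVert M\rVert_{L^2}$ — this is clearly the source of the first term $2\sqrt{\zeta|I|}\,\lVert M\rVert_{L^2}$ in the claimed bound.

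Next I would treat the \emph{tail} $M \cdot \charfn_{(\zeta I)^c}$ using the decay hypothesis \eqref{decay}. I would split $(\zeta I)^c$ into dyadic annuli $\{2^n \zeta I \setminus 2^{n-1}\zeta I\}_{n \ge 1}$ (or the equivalent one-sided shells away from $y_0$). On each annulus the bound $|M(x)| \le C_4 |I|/(x-y_0)^2$ gives an explicit $L^2$ estimate: integrating $|I|^2/(x-y_0)^4$ over a shell at distance $\sim 2^n\zeta|I|$ yields an $L^2$ norm decaying geometrically in $n$, with the numerical factors $\sqrt{57}$ and $\sqrt{320/3}$ emerging from evaluating these integrals on the first shell versus the remaining shells. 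Each shell piece must again be adjusted to have integral zero, and crucially the sum of all the subtracted means — together with $m$ from the local part — must vanish, since $\int_\R M = 0$ by hypothesis. I would reconcile this by redistributing the leftover constant: because the total integral is zero, the accumulated correction can be absorbed into a single additional atom (or into adjusting the mean of the local atom), so no uncontrolled term appears. Summing the geometric series of annular coefficients produces the factor $\frac{4}{3\zeta}(\sqrt{57}+\sqrt{320/3})\,C_4$, where the $1/\zeta$ reflects that the innermost shell already sits at distance $\zeta|I|/2$ from $y_0$.

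The delicate point — and the one I would write out most carefully — is ensuring that all the mean-correction terms telescope against the global cancellation $\int_\R M = 0$ without forcing me to introduce atoms on ever-larger intervals with badly growing coefficients. The clean way is to define, for each $n$, a correction supported on the union of the outer shells (a ``Calder\'on--Zygmund type'' redistribution), so that each correcting function is itself a bounded multiple of an atom on a dyadic dilate of $\zeta I$, with coefficient again geometrically small. Once that is arranged, adding the local coefficient to the summed tail coefficients gives exactly the two-term bound in the statement, completing the proof.
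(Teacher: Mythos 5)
Your overall strategy is the right one, and it is essentially the paper's: this is the Coifman--Weiss ``molecule into atoms'' decomposition, with a local piece on $\zeta I$ producing the term $2\sqrt{\zeta|I|}\,\lVert M\rVert_{L^2}$, annular tail pieces made mean-zero, and the subtracted means redistributed outward using $\int_\R M\,dx=0$. The paper makes your ``redistribution'' precise by summation by parts: with $m_j=\int_{\Omega_j}M\,dy$, $\widetilde{\charfn}_j=\charfn_{\Omega_j}/|\Omega_j|$ and $n_j=\sum_{k\ge j}m_k$ (so $n_0=0$ by the cancellation hypothesis), one writes $\sum_j m_j\widetilde{\charfn}_j=\sum_j n_{j+1}(\widetilde{\charfn}_{j+1}-\widetilde{\charfn}_j)$, and each difference $\widetilde{\charfn}_{j+1}-\widetilde{\charfn}_j$ is automatically a multiple of an atom (integral zero, explicit $L^2$ norm). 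Your sketch of this step is vaguer but is the same mechanism, and the decay hypothesis does make the tail sums $n_{j+1}$ geometrically small, so that part of your plan goes through.

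The genuine gap is in the constants, which here are the entire point of the lemma (the paper needs explicitly computable bounds to push an operator norm below $1$). You propose \emph{dyadic} shells, i.e.\ ratio $\gamma=2$. Running the Coifman--Weiss scheme with shell ratio $\gamma$ produces a tail contribution of
\[
\frac{4C_4}{\zeta}\left( \sqrt{\frac{\gamma^2+10\gamma+1}{3(\gamma-1)}} + \sqrt{\frac{\gamma^3(\gamma+1)}{(\gamma-1)^3}} \,\right),
\]
and at $\gamma=2$ this is $\frac{4C_4}{\zeta}\bigl(5/\sqrt{3}+2\sqrt{6}\bigr)\approx 31.1\,C_4/\zeta$, which is strictly larger than the claimed bound $\frac{4}{3\zeta}\bigl(\sqrt{57}+\sqrt{320/3}\bigr)C_4\approx 23.8\,C_4/\zeta$. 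So the proof as you describe it does not establish the stated inequality. The paper keeps $\gamma$ as a free parameter, notes the minimum occurs near $\gamma\approx 4.6$, and takes $\gamma=4$, which is exactly where $\sqrt{57}$ and $\sqrt{320/3}$ come from: $\sqrt{57}/3$ is the $\gamma=4$ value of the first radical (summed over \emph{all} the mean-zero shell atoms) and $\sqrt{320/3}/3$ is the $\gamma=4$ value of the second (the mean-redistribution atoms). Your attribution of these two numbers to ``the first shell versus the remaining shells'' is therefore also incorrect; they index the two \emph{families} of atoms, not an inner/outer split. To close the gap, replace the dyadic shells by shells of ratio $\gamma=4$ (or carry $\gamma$ as a parameter and optimize at the end).
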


The proof adapts the essential ideas from Coifman and Weiss's decomposition of a molecule into a sum of atoms \cite[Theorem C]{CW77}. They used dyadic scaling, $\gamma = 2$, whereas we obtain a better constant by optimizing over $\gamma>1$ in the proof below. Also, they chose the initial radius $R$ to depend on the $L^2$ norm of $M$, whereas our choice of $R$ will depend on $\zeta$ and $I$. But the central idea of decomposing $M$ into atoms supported on annuli comes directly from their work.

\begin{proof}[Proof of Lemma~\ref{moleculesatoms}]
We may suppose $M \not\equiv  0$, and that the interval is centered at $y_0=0$, by a translation. Let $\gamma > 1$. (We will later choose $\gamma=4$.) Define
\begin{align*}
R & = \frac{\zeta |I|}{2} , \\
I_j & = [-\gamma^j R,\gamma^j R] , \qquad j=0,1,2,3,\dots \\
\Omega_0 & = I_0 = [-R,R] = \zeta I , \\
\Omega_j & = I_j \setminus I_{j-1} = [-\gamma^j R,\gamma^j R] \setminus [-\gamma^{j-1} R,\gamma^{j-1}R]  , \qquad j=1,2,3,\dots \\
\charfn_j & = \charfn_{\Omega_j} , \\
M_j & = \Big( M - \frac{1}{|\Omega_j|} \int_{\Omega_j} M(y) \, dy \Big) \charfn_j .
\end{align*}
Clearly $M_j$ is supported in $\Omega_j \subset I_j$, and $M_j$ is square integrable and has mean value zero.

We break the proof into several steps.

\smallskip
Step 1. We will prove $M_j/\lambda_j$ is an atom, for numbers $\lambda_j$ that satisfy
\begin{align}
0 < \lambda_0 & \leq 2 \sqrt{\zeta |I|} \, \lVert M \rVert_{L^2} , \label{lambda0} \\
0 < \lambda_j & \leq \frac{4C_4}{\sqrt{3} \zeta} \sqrt{(\gamma-1)(\gamma^2+10\gamma+1)} \, \gamma^{-j} , \qquad j=1,2,3,\dots \label{lambdaj}
\end{align}
To start proving these claims, let
\[
\lambda_j = \lVert M_j \rVert_{L^2(\Omega_j)} \sqrt{|I_j|}
\]
so that $M_j/\lambda_j$ satisfies the definition of an atom supported in the interval $I_j$. (This choice of $\lambda_j$ is positive except when $M_j \equiv 0$, and in that exceptional case we may instead choose $\lambda_j$ to be any positive number satisfying \eqref{lambda0} or \eqref{lambdaj}.) Thus the task is to estimate the norm of $M_j$ and hence to establish \eqref{lambda0} and \eqref{lambdaj}. First notice that for $x \in \Omega_j$,
\begin{align}
|M_j(x)|
& \leq |M(x)| + \frac{1}{|\Omega_j|} \int_{\Omega_j} |M(y)| \, dy \label{basic1} \\
& \leq |M(x)| + \frac{1}{|\Omega_j|^{1/2}} \lVert M \rVert_{L^2(\Omega_j)} . \label{basic2}
\end{align}
Hence in particular, $\lVert M_0 \rVert_{L^2(\Omega_0)} \leq 2 \lVert M \rVert_{L^2(\Omega_0)}$ by \eqref{basic2} and the triangle inequality. Thus $\lambda_0 \leq 2\lVert M \rVert_{L^2} \sqrt{\zeta |I|} $, which is the desired estimate \eqref{lambda0}.

Now consider $j \geq 1$. Note that
\[
|I_j| = 2\gamma^j R \qquad \text{and} \qquad |\Omega_j| = 2(\gamma-1)\gamma^{j-1}R .
\]
If $x \in \Omega_j \subset (\zeta I)^c$ then
\begin{equation} \label{Mest}
|M(x)| \leq \frac{2RC_4}{\zeta x^2}
\end{equation}
by the hypothesis \eqref{decay} with $y_0=0$, and so by \eqref{basic1},
\begin{align*}
|M_j(x)|
& \leq \frac{2RC_4}{\zeta x^2} +  \frac{1}{(\gamma-1)\gamma^{j-1}R} \int_{\gamma^{j-1}R}^{\gamma^j R} \frac{2RC_4}{\zeta y^2} \, dy \\
& = \frac{2C_4}{\zeta} \Big( \frac{R}{x^2} + \frac{1}{\gamma^{2j-1}R} \Big) .
\end{align*}
Squaring and integrating shows that
\[
\lVert M_j \rVert_{L^2(\Omega_j)}^2 \leq \frac{8C_4^2}{3\zeta^2 \gamma^{3j}R} (\gamma-1)(\gamma^2+10\gamma+1) .
\]
Substituting this estimate into the definition of $\lambda_j$ implies the bound \eqref{lambdaj}.

\smallskip Step 2.
From Step 1 we conclude that $\sum_{j=0}^\infty M_j$ belongs to $H^1$, with norm
\[
\big\lVert \sum_{j=0}^\infty M_j \big\rVert_{H^1} \leq \sum_{j=0}^\infty \lambda_j \leq
2\sqrt{\zeta |I|} \, \lVert M \rVert_{L^2} + \frac{4C_4}{\zeta} \sqrt{\frac{\gamma^2+10\gamma+1}{3(\gamma-1)}} .
\]

\smallskip Step 3.
To construct the remaining parts of the decomposition of $M$, we let
\[
m_j = \int_{\Omega_j} M(y) \, dy \qquad \text{and} \qquad \widetilde{\charfn}_j = \frac{1}{|\Omega_j|} \charfn_j ,
\]
so that
\[
m_j \widetilde{\charfn}_j = \Big( \frac{1}{|\Omega_j|} \int_{\Omega_j} M(y) \, dy \Big) \charfn_j .
\]
We will show $\sum_{j=0}^\infty m_j \widetilde{\charfn}_j$ belongs to $H^1$.

Let $n_j = \sum_{k=j}^\infty m_k$. Then
\begin{align*}
\sum_{j=0}^\infty m_j \widetilde{\charfn}_j
& = \sum_{j=0}^\infty (n_j - n_{j+1}) \widetilde{\charfn}_j \\
& = \sum_{j=0}^\infty n_{j+1} (\widetilde{\charfn}_{j+1} - \widetilde{\charfn}_j)
\end{align*}
by summation by parts and using that $n_0=\int_\R M(y) \, dy = 0$ by hypothesis. Moreover,
\[
\sqrt{\frac{\gamma-1}{\gamma(\gamma+1)}} \, (\widetilde{\charfn}_{j+1} - \widetilde{\charfn}_j)
\]
is an atom supported in the interval $I_{j+1}$, because it has integral zero and because the disjointness of $\Omega_j$ and $\Omega_{j+1}$ implies that
\begin{align*}
\lVert \widetilde{\charfn}_{j+1} - \widetilde{\charfn}_j \rVert_{L^2(I_{j+1})}
& = \Big( \frac{1}{|\Omega_{j+1}|} + \frac{1}{|\Omega_j|} \Big)^{\! 1/2} \\
& = \sqrt{\frac{\gamma(\gamma+1)}{\gamma-1}} |I_{j+1}|^{-1/2} .
\end{align*}

Hence $\sum_{j=0}^\infty m_j \widetilde{\charfn}_j$ belongs to $H^1$, with atomic norm at most
\begin{equation} \label{nnormest}
 \sqrt{\frac{\gamma(\gamma+1)}{\gamma-1}} \sum_{j=0}^\infty |n_{j+1}| .
\end{equation}
Lastly, we observe that
\begin{align*}
|n_{j+1}|
& \leq \sum_{k=j+1}^\infty \int_{\Omega_k} |M(y)| \, dy \\
& \leq \frac{4RC_4}{\zeta} \int_{\gamma^j R}^\infty \frac{1}{y^2} \, dy \qquad \text{by estimate \eqref{Mest}} \\
& = \frac{4C_4}{\zeta} \gamma^{-j} .
\end{align*}
Hence by \eqref{nnormest} and a geometric sum, the atomic norm of $\sum_{j=0}^\infty m_j \widetilde{\charfn}_j$ is at most
\[
\frac{4C_4}{\zeta} \sqrt{\frac{\gamma^3(\gamma+1)}{(\gamma-1)^3}} .
\]

\smallskip
Step 4. Clearly we may decompose $M$ as
\[
M = \sum_{j=0}^\infty M_j + \sum_{j=0}^\infty m_j \widetilde{\charfn}_j ,
\]
and so the bounds from Step 2 and Step 3 show that
\[
\lVert M \rVert_{H^1} \leq 2\sqrt{\zeta |I|} \, \lVert M \rVert_{L^2} + \frac{4C_4}{\zeta}  \Big( \sqrt{\frac{\gamma^2+10\gamma+1}{3(\gamma-1)}} + \sqrt{\frac{\gamma^3(\gamma+1)}{(\gamma-1)^3}} \, \Big) .
\]
The right side is minimal when $\gamma \approx 4.6$. We choose $\gamma=4$, which lies close to the minimum point and gives a simple formula. The lemma follows immediately.
\end{proof}

Now we can deduce boundedness of $Z$ on atoms.
\begin{lemma}[CZOs are bounded on atoms] \label{atombound}
Suppose Assumptions 1a, 1c, 2, 3 hold, and that $a(x)$ is an atom. Let $\zeta \geq 3$. Then
$Za \in H^1$ with
\[
\lVert Za \rVert_{H^1} \leq 2 \sqrt{\zeta} \, \lVert Z \rVert_{L^2 \to L^2} +  D(\zeta) C_3 .
\]
\end{lemma}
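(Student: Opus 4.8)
The plan is to chain together Lemma~\ref{atomsmolecules} and Lemma~\ref{moleculesatoms}, since between them they carry an atom to a molecule and a molecule into $H^1$, each step with explicit constants. So let $a$ be an atom supported in a bounded interval $I$ centered at $y_0$. Because Assumptions 1a, 1c and 2 hold, Lemma~\ref{atomsmolecules} applies and yields $Za \in L^1 \cap L^2$ together with the quadratic decay estimate $|(Za)(x)| \leq C_4 |I| / (x-y_0)^2$ on $(\zeta I)^c$, where $C_4 = \frac{1}{2\sqrt{3}} \sqrt{\zeta^4(\zeta^2+3)/(\zeta^2-1)^3} \, C_3$. Since $a \in L^2$ has compact support and integral zero, Assumption 3 guarantees $\int_\R (Za)(x)\,dx = 0$. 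Thus $M := Za$ meets every hypothesis of Lemma~\ref{moleculesatoms}, with this same $I$, $y_0$ and $C_4$.

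Applying Lemma~\ref{moleculesatoms} then gives $Za \in H^1$ with
\[
\lVert Za \rVert_{H^1} \leq 2\sqrt{\zeta |I|} \, \lVert Za \rVert_{L^2} + \frac{4}{3\zeta}\big(\sqrt{57} + \sqrt{320/3}\big) C_4 .
\]
It remains to match each term to the claimed bound. For the first term I would use the operator norm of $Z$ together with the atom normalization $\lVert a\rVert_{L^2} \leq |I|^{-1/2}$, giving $\lVert Za\rVert_{L^2} \leq \lVert Z\rVert_{L^2\to L^2} |I|^{-1/2}$, so that the factor $\sqrt{|I|}$ cancels and the first term is at most $2\sqrt{\zeta}\,\lVert Z\rVert_{L^2\to L^2}$, exactly as in the statement.

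The second term is where the (purely numerical) work lies. Substituting $C_4$ and writing $\sqrt{\zeta^4(\zeta^2+3)/(\zeta^2-1)^3} = \zeta^2\sqrt{(\zeta^2+3)/(\zeta^2-1)^3}$, the term collapses to $\frac{2}{3\sqrt3}\big(\sqrt{57}+\sqrt{320/3}\big)\,\zeta\sqrt{(\zeta^2+3)/(\zeta^2-1)^3}\,C_3$. Comparing with $D(\zeta) = 7\zeta\sqrt{(\zeta^2+3)/(\zeta^2-1)^3}$, what must be checked is the single inequality $\frac{2}{3\sqrt3}\big(\sqrt{57}+\sqrt{320/3}\big) \leq 7$; numerically the left side is about $6.88$, so this holds and the second term is bounded by $D(\zeta)C_3$. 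Adding the two estimates gives the lemma. The only genuine obstacle is this constant bookkeeping --- verifying that the molecular constant from Lemma~\ref{moleculesatoms}, once $C_4$ is inserted, really is dominated by the clean factor $7$ packaged into $D(\zeta)$; everything else is a direct citation of the two preceding lemmas.
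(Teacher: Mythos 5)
Your proof is correct and follows exactly the paper's own argument: chain Lemma~\ref{atomsmolecules} (with Assumption 3 supplying $Za \in L^1 \cap L^2$ and the vanishing integral) into Lemma~\ref{moleculesatoms}, cancel the $\sqrt{|I|}$ factor using $\lVert a \rVert_{L^2} \leq |I|^{-1/2}$, and absorb the molecular constant into $D(\zeta)$ via the numerical check $\tfrac{2}{3\sqrt{3}}\big(\sqrt{57}+\sqrt{320/3}\big) \approx 6.88 \leq 7$. The paper compresses this last step into the phrase ``substituting for $C_4$ in terms of $C_3$,'' so your explicit verification that the factor $7$ in $D(\zeta)$ dominates is precisely the bookkeeping the authors intended.
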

\begin{proof}[Proof of Lemma~\ref{atombound}]
Since $a$ is an atom, we have $Za \in L^1 \cap L^2$ with $\int_\R (Za)(x) \, dx = 0$ by Assumption 3, and so
\[
|(Za)(x)| \leq \frac{C_4 |I|}{(x-y_0)^2}
\]
for all $x \in (\zeta I)^c$, by Lemma~\ref{atomsmolecules} (using Assumptions 1a, 1c and 2). Hence by Lemma~\ref{moleculesatoms}, we conclude $Za \in H^1$ with
\[
\lVert Za \rVert_{H^1}
\leq 2 \sqrt{\zeta |I|} \, \lVert Za \rVert_{L^2} + \frac{4}{3\zeta} \big( \sqrt{57} + \sqrt{320/3} \big) C_4 .
\]
Now we finish the proof by substituting for $C_4$ in terms of $C_3$ (using the formula in Lemma~\ref{atomsmolecules}) and recalling that $\lVert a \rVert_{L^2} \leq |I|^{-1/2}$.
%\emph{Note.} The number ``$7$'' in the definition of the function $D(\zeta)$ is an upper bound for the actual
%coefficient, which is about $6.88$.
\end{proof}

Next we develop a bound from $L^1$ to $\text{weak-}L^1$. The result is standard \cite[Theorem~8.2.1]{G08b}, and we include the lemma only to get a formula for the constant $C_5$.
\begin{lemma}[CZOs map $L^1$ to weak-$L^1$] \label{weakL1}
Let $\zeta \geq 3$. If Assumptions 1a, 1c, 2 hold and $f \in L^1 \cap L^2$, then
\begin{equation} \label{weakbound}
\big| \{ x \in \R : |(Zf)(x)| > \alpha \} \big| \leq \frac{C_5}{\alpha} \lVert f \rVert_{L^1} \qquad \text{for all $\alpha > 0$,}
\end{equation}
where $C_5 = \sqrt{32 \zeta} \, \lVert Z \rVert_{L^2 \to L^2} + \frac{8\zeta}{\zeta^2-1} C_3$. Hence $Z$ extends to a linear operator from $L^1$ to weak-$L^1$ that satisfies the same bound \eqref{weakbound}.
\end{lemma}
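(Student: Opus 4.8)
The plan is to run the classical Calder\'on--Zygmund decomposition of $f$, but with the decomposition height left as a free parameter that I optimize only at the very end. This optimization is exactly what converts the naive $\norm{Z}_{L^2\to L^2}^2$ dependence of the ``good part'' into the \emph{linear} factor $\sqrt{32\zeta}$ appearing in $C_5$, which is the one genuinely non-routine point. Throughout, $f\in L^1\cap L^2$ guarantees both that $Zf$ is defined (via the $L^2$ boundedness in Assumption~2) and that the decomposition applies.

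Fix $\alpha>0$ and a height $\lambda>0$ to be chosen. Applying the dyadic Calder\'on--Zygmund decomposition to $f\in L^1$ produces a disjoint family of intervals $\{Q_i\}$ with $\lambda<\frac{1}{|Q_i|}\int_{Q_i}|f|\le 2\lambda$, with $|f|\le\lambda$ a.e.\ off $\Omega=\bigcup_i Q_i$, and with $|\Omega|\le\lambda^{-1}\norm{f}_{L^1}$. Write $g=f\charfn_{\Omega^c}+\sum_i(\text{avg}_{Q_i}f)\charfn_{Q_i}$ and $b=f-g=\sum_i b_i$, where each $b_i$ is supported in $Q_i$, has mean zero, and lies in $L^2$ (this is where $f\in L^2$ is used). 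Since $g\in L^1\cap L^\infty$ with $\norm{g}_{L^\infty}\le 2\lambda$ and $\norm{g}_{L^1}\le\norm{f}_{L^1}$, we get $\norm{g}_{L^2}^2\le 2\lambda\norm{f}_{L^1}$. Splitting $\{|Zf|>\alpha\}\subseteq\{|Zg|>\alpha/2\}\cup\{|Zb|>\alpha/2\}$, the good part is handled by Chebyshev and the $L^2$ bound of Assumption~2: $|\{|Zg|>\alpha/2\}|\le 4\alpha^{-2}\norm{Z}_{L^2\to L^2}^2\norm{g}_{L^2}^2\le 8\lambda\alpha^{-2}\norm{Z}_{L^2\to L^2}^2\norm{f}_{L^1}$.

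For the bad part I would enlarge each $Q_i$ by the factor $\zeta$; the enlarged union $\Omega^\ast=\bigcup_i\zeta Q_i$ satisfies $|\Omega^\ast|\le\zeta|\Omega|\le\zeta\lambda^{-1}\norm{f}_{L^1}$ and is discarded outright. Off $\Omega^\ast$, I would use the integral representation (Assumption~2, valid since $x\notin\supp b_i$), subtract $K(x,y_i)$ using $\int b_i=0$ with $y_i$ the center of $Q_i$, and invoke the kernel regularity (Assumption~1c). The hypothesis $\zeta\ge 3$ is precisely what forces $|y-y_i|\le\frac12|x-y|$ for $y\in Q_i$, $x\in(\zeta Q_i)^c$, so that Assumption~1c applies; evaluating $\int_{(\zeta Q_i)^c}|x-y|^{-2}\,dx$ exactly (without a crude lower bound on $|x-y|$) yields $\int_{(\zeta Q_i)^c}|Zb_i|\le\frac{2\zeta}{\zeta^2-1}C_3\norm{b_i}_{L^1}$. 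Summing with $\sum_i\norm{b_i}_{L^1}\le 2\norm{f}_{L^1}$ and applying Chebyshev gives $|\{x\notin\Omega^\ast:|Zb|>\alpha/2\}|\le\frac{8\zeta}{(\zeta^2-1)\alpha}C_3\norm{f}_{L^1}$. Collecting the three contributions and minimizing $8\lambda\alpha^{-2}\norm{Z}_{L^2\to L^2}^2+\zeta\lambda^{-1}$ over $\lambda$ (the minimum is $\sqrt{32\zeta}\,\alpha^{-1}\norm{Z}_{L^2\to L^2}$, attained at $\lambda=\alpha\sqrt{\zeta/8}\,/\norm{Z}_{L^2\to L^2}$) produces exactly $\frac{C_5}{\alpha}\norm{f}_{L^1}$. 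Finally, since $L^1\cap L^2$ is dense in $L^1$ and the bound does not involve the $L^2$ norm of $f$, $Z$ extends to $L^1$ with the same weak-type estimate.

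I expect the only real obstacle to be the constant bookkeeping, since the structure is entirely standard. The two places demanding genuine care are the optimization over $\lambda$, which is what makes the good-part constant linear rather than quadratic in $\norm{Z}_{L^2\to L^2}$ and supplies the factor $\sqrt{32\zeta}$, and the \emph{exact} tail computation, which must avoid crude estimates in order to deliver $\frac{8\zeta}{\zeta^2-1}$ rather than a weaker constant such as $\frac{8}{\zeta-1}$.
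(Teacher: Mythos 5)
Your proof is correct and takes essentially the same approach as the paper: the paper runs exactly this Calder\'on--Zygmund decomposition argument, citing the proof of Grafakos's Theorem~8.2.1 with $Q^*$ replaced by $\zeta I$, the same exact tail computation $\int_{(\zeta I)^c} |Zb_i| \leq \frac{2\zeta}{\zeta^2-1} C_3 \lVert b_i \rVert_{L^1}$, the same factor-of-$2$ gain from $\sum_i \lVert b_i \rVert_{L^1} \leq 2 \lVert f \rVert_{L^1}$, and the same minimization over the free decomposition height. The only difference is presentational: you write out the standard decomposition machinery explicitly where the paper outsources it to Grafakos.
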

\begin{proof}[Proof of Lemma~\ref{weakL1}]
First we show that if $b \in L^2$ is supported in a bounded interval $I$ centered at $y_0 \in \R$ and $b$ has integral $0$, then
\begin{equation} \label{L1estimate}
\int_{(\zeta I)^c} \big| (Zb)(x) \big| \, dx \leq \frac{2\zeta}{\zeta^2-1} C_3 \lVert b \rVert_{L^1} .
\end{equation}
Indeed, by \eqref{beforeCS} we have
\[
\big| (Zb)(x) \big| \leq C_3 \int_I \frac{|y_0 - y|}{(x-y)^2} |b(y)| \, dy
\]
when $x \notin \zeta I$. Integrating gives
\[
\int_{(\zeta I)^c} \big| (Zb)(x) \big| \, dx
\leq C_3 \int_I \int_{(\zeta I)^c} \frac{|y_0 - y|}{(x-y)^2} \, dx \, |b(y)| \, dy .
\]
We evaluate the inner integral by changing variable like in the proof of Lemma~\ref{atomsmolecules}:
\begin{align*}
\int_{(\zeta I)^c} \frac{|y_0 - y|}{(x-y)^2} \, dx
& = \int_{\{ |w| \geq \zeta \}} \frac{|z|}{(w-z)^2} \, dw \\
& = \frac{2\zeta |z|}{\zeta^2-z^2} \leq \frac{2\zeta}{\zeta^2-1}
\end{align*}
since $|z| \leq 1$ when $y \in I$. Hence we have proved \eqref{L1estimate}.

To prove the lemma, we now apply the proof of \cite[Theorem~8.2.1]{G08b} to $f$, except changing the interval $Q$ in that proof to $I$, and changing the interval $Q^*$ to $\zeta I$, and using \eqref{L1estimate} as part of our estimate. One hence obtains the bound
\[
\big| \{ x \in \R : |(Zf)(x)| > \alpha \} \big| \leq \left( 8 \gamma \lVert Z \rVert_{L^2 \to L^2}^2 + \frac{\zeta}{\gamma} + \frac{8\zeta}{\zeta^2-1}C_3 \right) \frac{\lVert f \rVert_{L^1}}{\alpha}
\]
where $\gamma>0$ is a free parameter. (If we directly followed \cite{G08b} then we would obtain $2C_3$ instead of $C_3$, but we gain a factor of $2$ by noting that the ``bad'' function in the Calder\'{o}n--Zygmund decomposition has $L^1$-norm at most $2 \lVert f \rVert_{L^1}$.)

Minimizing over the choice of $\gamma$ yields the constant $C_5$ stated in the lemma.
\end{proof}

\subsection*{Proof of Proposition~\ref{CZatomicbound}}

Boundedness of $Z$ on $L^p$ for $p \in (1,2)$, as stated in estimate \eqref{eq-ii}, holds by combining the weak $(1,1)$ bound in Lemma~\ref{weakL1} and the strong $(2,2)$ bound in Assumption 2 with the explicit interpolation result in Proposition~\ref{interpr} (taking $r=1$ there).
%\cite[Corollary~2]{BL12a}

Boundedness for $p \in (2,\infty)$ as in \eqref{eq-iii} follows by duality, because $Z^* : L^2 \to L^2$ satisfies Calder\'{o}n--Zygmund Assumptions 1, 2 and 3 for the kernel $K^*(x,y)=\overline{K(y,x)}$, except with the constants $C_2$ and $C_3$ interchanged.
%[\emph{Note.}  To verify Assumption 2 for $Z^*$, suppose $f,g \in L^2$ have disjoint supports.
%Then
%%
%\begin{align*}
%\langle Z^* f , g \rangle
%& = \langle f , Zg \rangle \\
%& = \int_{\supp(f)} f(y)  \overline{ \int_\R K(y,x) g(x) \, dx} dy \qquad \text{by Assumption 2} \\
%& = \int_\R \int_\R K^*(x,y) f(y) \, dy \, \overline{g(x)} dx
%\end{align*}
%%
%by Fubini, noting that $|K(y,x)| \leq C_1/|x-y|$ is bounded because the supports of $f$ and $g$
%are disjoint. Hence $Z^* f(x) = \int_\R K^*(x,y) f(y) \, dy$ for almost every $x \not\in \supp(f)$.]

Now we prove $Z$ is bounded on $H^1$. Let $f \in H^1$. Then $f$ belongs to $L^1$, and so $Zf$ belongs to weak-$L^1$ by Lemma~\ref{weakL1}. To obtain a formula for $Zf$, we consider an atomic decomposition $f=\sum_{j=1}^\infty \lambda_j a_j$ of $f$. Recall $Za_j \in H^1$ with its norm bounded by an absolute constant, by Lemma~\ref{atombound}. Hence the series $\sum_{j=1}^\infty \lambda_j Za_j$ converges absolutely in $H^1$. In fact, this series equals $Zf$ a.e.\ by an argument that uses the weak-$L^1$ result (see \cite[p.~95]{G08a}). Therefore the formula
\[
Zf = \sum_{j=1}^\infty \lambda_j Za_j
\]
holds, and holds for each choice of atomic decomposition. The $H^1$ norm estimate for $Z$ in \eqref{eq-i} now follows from the bound on atoms in Lemma~\ref{atombound}.

Boundedness of $Z$ on $\BMO$ as in \eqref{eq-iv} is immediate from duality and the boundedness of $Z^*$ on $H^1$.

\section{\bf Wavelet frame operators are CZOs}
\label{waveletsfreq}

The wavelet frame operator $ST$ is known to be a Calder\'{o}n--Zygmund operator, under suitable hypotheses on the synthesizer and analyzer. The existing proofs proceed in the spatial (or time) domain, using $L^1$ majorants: see  \cite[\S5.6]{HW96}, \cite[Chapter~7]{MC97} or the explicit version in \cite[Lemma 3.1]{LS12}. Unfortunately, in practice such time domain estimates are too big (by an order of magnitude) to prove invertibility for examples such as the Mexican hat. We need better estimates that can capture cancelations in the analyzer and synthesizer.

Theorem~\ref{frameCZOfreq} in this section develops improved estimates on those Calder\'{o}n--Zygmund constants of the wavelet frame operator, by working in the frequency domain and keeping $\widehat{\psi}$ and $\widehat{\phi}$ together there (rather than ``splitting apart'' $\psi$ and $\phi$ in the time domain, as one does in the standard approach). Then later we use the theorem to prove our main result, in Section~\ref{mainproofs}.

\subsection{Wavelet definitions}
Recall the \emph{frame operator} associated with synthesizer $\psi$ and analyzer $\phi$:
\[
ST (f) = \sum_{j,k \in \Z} B \langle f , \phi_{j,k} \rangle_{L^2} \, \psi_{j,k} .
\]
Formally, we may rewrite the frame operator as an integral operator
\[
(STf)(x) = \int_\R K(x,y) f(y) \, dy ,
\]
where the \emph{wavelet frame kernel} is defined formally by
\begin{equation} \label{framekerneldef}
K(x,y) = B \sum_{j,k \in \Z} \psi_{j,k}(x) \overline{\phi_{j,k}(y)} , \qquad x,y \in \R .
\end{equation}
Denote the portion of the kernel at level $j=0$ by
\[
K_0(x,y) = B \sum_{k \in \Z} \psi(x-Bk) \overline{\phi(y-Bk)} .
\]

\subsection{Kernel kernel estimates in the spatial domain}
We will prove absolute convergence of the series defining the kernel $K$, in the time domain, and of the series for its partial derivatives. Even though we ultimately want estimates in the frequency domain, the convergence of the kernel in the time domain ensures that the Calderon--Zygmund Assumptions 1 and 2 make sense.
\begin{lemma}[Convergence and differentiability of kernel at level $0$] \label{K0properties}
Assume $\psi$ is bounded and $|\phi(x)| \lesssim 1/(1+x^2)$. Then the series
\[
K_0(x,y) = B \sum_{k \in \Z} \psi(x-Bk) \overline{\phi(y-Bk)}
\]
is absolutely convergent, for each $x,y \in \R$. Further:

(i) If $\psi$ and $\phi$ are continuous, then so is $K_0$.

(ii) If $\psi$ is continuously differentiable, $\psi^\prime$ is bounded, and $\phi$ is continuous, then the partial derivative
\[
\frac{\partial K_0}{\partial x} = B \sum_{k \in \Z} \psi^\prime(x-Bk) \overline{\phi(y-Bk)}
\]
exists and is continuous.
\end{lemma}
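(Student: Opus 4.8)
The plan is to reduce all three assertions to a single elementary majorant for the lattice sum, and then invoke standard results on uniformly convergent series. First I would establish absolute convergence. Using that $\psi$ is bounded and $\abs{\phi(x)} \lesssim 1/(1+x^2)$, each term obeys
\[
\abs{B\,\psi(x-Bk)\,\overline{\phi(y-Bk)}} \leq B\,\norm{\psi}_\infty \frac{C}{1+(y-Bk)^2} ,
\]
so convergence follows once I bound the lattice sum $\sum_{k\in\Z} 1/(1+(y-Bk)^2)$. I would observe that this sum is $B$-periodic and continuous in $y$, hence bounded; or, more quantitatively, isolate the single term for which $Bk$ is closest to $y$ and compare the remaining terms to the integral $\int_\R \frac{dt}{1+(y-Bt)^2} = \pi/B$, obtaining a finite bound $C_B$ that is in fact uniform in $y$. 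This proves the series defining $K_0$ converges absolutely for every $(x,y)$.

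Next, for part (i), I would upgrade this to local uniform convergence so as to apply the Weierstrass $M$-test. On a compact square $\abs{x},\abs{y}\leq R$, for $\abs{k}$ large one has $\abs{y-Bk}\geq B\abs{k}-R$, so the tail is dominated by $\sum_{\abs{k}>N} C/(1+(B\abs{k}-R)^2)$, a convergent numerical series independent of $(x,y)$. Since $\psi$ and $\phi$ are continuous, each term of the series is jointly continuous in $(x,y)$; a locally uniform limit of continuous functions is continuous, so $K_0$ is continuous.

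For part (ii), I would apply the standard term-by-term differentiation theorem for series of functions: fix $y$, note the original series converges by Step~1, and check that the formally differentiated series $B\sum_k \psi'(x-Bk)\overline{\phi(y-Bk)}$ converges uniformly on compact $x$-intervals. The latter follows from exactly the majorant above with $\norm{\psi}_\infty$ replaced by $\norm{\psi'}_\infty$, using that $\psi'$ is bounded and $\phi$ still decays like $1/(1+x^2)$. This justifies differentiating under the sum, giving the stated formula for $\partial K_0/\partial x$; and since $\psi'$ and $\phi$ are continuous, the same $M$-test argument as in (i), now in both variables, shows this derivative is jointly continuous.

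The only genuine obstacle is the first step, namely pinning down the lattice-sum bound and doing so uniformly in $y$, because everything afterward is a routine application of uniform convergence. The uniformity in $y$ is what makes the $M$-test usable on compact sets in both variables, so I would record it once and reuse it for both the continuity and the differentiability claims.
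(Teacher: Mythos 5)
Your proof is correct and follows essentially the same route as the paper: both rest on the lattice-sum majorant $\sum_{k \in \Z} 1/(1+(y-Bk)^2) < \infty$ coming from the boundedness of $\psi$ (or $\psi^\prime$) and the quadratic decay of $\phi$. The only difference is cosmetic: where you invoke the Weierstrass $M$-test and the term-by-term differentiation theorem, the paper applies dominated convergence (to the series directly for part (i), and for part (ii) to the difference quotients, which are bounded by $\lVert \psi^\prime \rVert_{L^\infty}$ via the mean value theorem).
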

The assumption $|\phi(x)| \lesssim 1/(1+x^2)$ has been chosen for its simplicity, and could certainly be weakened. It suffices to assume that $\phi$ has a bounded, symmetric decreasing, integrable majorant, such as $1/(1+|x|^{1+\delta})$. Similar comments apply to other lemmas below.

\emph{Note.} A formula for $\partial K_0/\partial y$ follows by interchanging the roles of $\psi$ and $\phi$.
\begin{proof}[Proof of Lemma~\ref{K0properties}]
For the absolute convergence, observe that
\[
|K_0(x,y)| \leq (\text{const.}) \lVert \psi \rVert_{L^\infty} \sum_{k \in \Z} 1 \big/ \big( 1 + (y-Bk)^2 \big) < \infty .
\]

(i) Now suppose in addition that $\psi$ and $\phi$ are continuous. Then $K_0(\tilde{x},\tilde{y}) \to K_0(x,y)$ as $(\tilde{x},\tilde{y}) \to (x,y)$ by an application of dominated convergence.

(ii) By definition of the partial derivative,
\begin{align*}
\frac{\partial K_0}{\partial x}
& = B \lim_{h \to 0} \sum_{k \in \Z} \frac{1}{h} \big[ \psi(x+h-Bk) - \psi(x-Bk) \big] \overline{\phi(y-Bk)} \\
& = B \sum_{k \in \Z} \psi^\prime(x-Bk) \overline{\phi(y-Bk)}
\end{align*}
using dominated convergence, since the difference quotient for $\psi$ is bounded by $\lVert \psi^\prime \rVert_{L^\infty}$. Continuity follows by part (i), since $\psi^\prime$ is continuous.
\end{proof}

Now we show that the series for the full kernel $K$ converges.
\begin{lemma}[Convergence and differentiability of kernel] \label{Kproperties}
Assume $|\psi(x)|$ and $|\phi(x)|$ are $\lesssim 1/(1+x^2)$. Then the series for $K(x,y)$ converges absolutely whenever $x \neq y$, with
\[
|K(x,y)| \leq B \sum_{j,k \in \Z} |A|^j |\psi(A^j x-Bk) \overline{\phi(A^j y-Bk)} | \leq (\text{const.}) \frac{1}{|x-y|} .
\]
Further:

(i) If $\psi$ and $\phi$ are continuous, then so is $K(x,y)$ wherever $x \neq y$. More precisely, the series $K(x,y) = \sum_{j \in \Z} |A|^j K_0(A^j x, A^j y)$ converges uniformly on the set $\{ (x,y) : |x-y| > \eps \}$, for each $\eps>0$, and hence $K$ is continuous there.

(ii) If $\psi$ is continuously differentiable, $\phi$ is continuous, and $|\psi^\prime(x)|$ and $|\phi(x)|$ are $\lesssim 1/(1+|x|^3)$, then the partial derivative $\partial K/\partial x$ exists by term-by-term differentiation wherever $x \neq y$, and $\partial K/\partial x$ is continuous. This term-by-term derivative converges absolutely, and satisfies
\[
\Big| \frac{\partial K}{\partial x}  \Big| \leq B \sum_{j,k \in \Z} |A|^{2j} \big| \psi^\prime(A^j x-Bk) \overline{\phi(A^j y-Bk)} \big| \leq (\text{const.}) \frac{1}{|x-y|^2} .
\]
\end{lemma}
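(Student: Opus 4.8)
The plan is to reduce the whole lemma to one elementary lattice-sum estimate, applied scale by scale, followed by a geometric summation over $j$. The key estimate is that for each fixed exponent $a \geq 2$,
\[
B\sum_{k \in \Z} \frac{1}{(1+|u-Bk|^a)(1+|v-Bk|^a)} \lesssim \frac{1}{1+|u-v|^a} , \qquad u,v \in \R .
\]
I would prove it by splitting $\Z$ into the $k$ with $|u-Bk| \geq \frac{1}{2}|u-v|$ and the remaining $k$, which then satisfy $|v-Bk| \geq \frac{1}{2}|u-v|$ by the triangle inequality. On the first set I bound the $u$-factor by $(1+2^{-a}|u-v|^a)^{-1}$ and use that $B\sum_k (1+|v-Bk|^a)^{-1}$ is bounded uniformly in $v$ (a Riemann-sum comparison, valid since $a>1$); the second set is symmetric. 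This is precisely the estimate that replaces the time-domain $L^1$-majorants of the classical proofs, and since the summand is not monotone — so one cannot simply compare the series to its integral — this near/far splitting is the real content of the argument and the main obstacle.

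Next I use the scale decomposition $K(x,y)=\sum_{j \in \Z} |A|^j K_0(A^j x, A^j y)$, which is a mere reindexing of the double series and is justified a posteriori by the absolute convergence established below. Taking $a=2$, $u=A^j x$, $v=A^j y$ (so $u-v = A^j(x-y)$) in the lattice estimate, and invoking the decay hypothesis $|\psi|,|\phi| \lesssim 1/(1+x^2)$, each level satisfies
\[
|A|^j \big| K_0(A^j x, A^j y) \big| \lesssim \frac{|A|^j}{1+|A|^{2j}|x-y|^2} .
\]
Writing $r=|x-y|$ and splitting the sum over $j$ at the critical scale $|A|^j \approx 1/r$ produces two geometric series, each dominated by its extreme term of size $\approx 1/r$; hence $\sum_j |A|^j |K_0(A^j x, A^j y)| \lesssim 1/r$. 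This gives both the absolute convergence of the double series and the pointwise bound $|K(x,y)| \lesssim 1/|x-y|$.

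For part (i) the same level bound supplies a Weierstrass majorant: on $\{|x-y|>\eps\}$ one has $|A|^j |K_0(A^j x, A^j y)| \leq M_j$ with $M_j = |A|^j$ for $j<0$ and $M_j = |A|^{-j}\eps^{-2}$ for $j \geq 0$, and $\sum_j M_j < \infty$ independently of $(x,y)$. Since each term is continuous by Lemma~\ref{K0properties}(i), the uniform limit $K$ is continuous on $\{|x-y|>\eps\}$; letting $\eps \to 0$ gives continuity wherever $x \neq y$.

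For part (ii) I differentiate term by term. By the chain rule the $x$-derivative of the $j$th term is $|A|^j A^j (\partial_1 K_0)(A^j x, A^j y)$, of modulus $|A|^{2j} |(\partial_1 K_0)(A^j x, A^j y)|$, and Lemma~\ref{K0properties}(ii) gives $(\partial_1 K_0)(u,v) = B\sum_k \psi^\prime(u-Bk)\overline{\phi(v-Bk)}$. Applying the lattice estimate with $a=3$ under the hypothesis $|\psi^\prime|,|\phi| \lesssim 1/(1+|x|^3)$ yields the level bound $\lesssim |A|^{2j}/(1+|A|^{3j}r^3)$, and splitting the $j$-sum at $|A|^j \approx 1/r$ as before gives $\lesssim 1/r^2$. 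The same splitting furnishes a summable majorant on $\{|x-y|>\eps\}$ (namely $|A|^{2j}$ for $j<0$ and $|A|^{-j}\eps^{-3}$ for $j \geq 0$), so the differentiated series converges uniformly there. The standard theorem on differentiation of series — uniform convergence of the derivative series together with convergence of the original series at one point — then identifies $\partial K/\partial x$ with the term-by-term derivative for $x \neq y$, and uniform convergence of continuous terms makes $\partial K/\partial x$ continuous off the diagonal.
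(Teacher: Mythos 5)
Your proposal is correct and takes essentially the same route as the paper: your key lattice-sum estimate is exactly the paper's triangle-inequality dichotomy ($|x-Bk|\geq\frac12|x-y|$ or $|y-Bk|\geq\frac12|x-y|$) used to ``split apart'' the majorants of $\psi$ and $\phi$, and your critical-scale splitting of the $j$-sum reproduces the paper's Lemma~\ref{Jsum}. The Weierstrass-majorant arguments for continuity in (i) and for term-by-term differentiation in (ii) also match the paper's proof.
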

Interchanging the roles of $\psi$ and $\phi$ gives an analogous formula for $\partial K/\partial y$.
\begin{proof}[Proof of Lemma~\ref{Kproperties}]
We adapt ideas from \cite[Lemma 5.3.12 and {\S}5.6]{HW96}; or see \cite{MC97}.

Write $\Gamma(x)=1/(1+x^2)$. For each $k$, the triangle inequality implies $|x-y| \leq |x-Bk|+|y-Bk|$, and so either $\frac{1}{2} |x-y| \leq |x-Bk|$ or else $\frac{1}{2} |x-y| \leq |y-Bk|$. Hence we may ``split apart'' $\psi$ and $\phi$, or rather the $\Gamma$-terms that bound them; that is,
\begin{align*}
|K_0(x,y)| & \leq C \sum_{k \in \Z} \Gamma(x-Bk) \Gamma(y-Bk) \\
& \leq C \Gamma \big( \frac{1}{2} |x-y| \big) \sum_{k \in \Z} \Gamma(y-Bk) + B \Gamma \big( \frac{1}{2} |x-y| \big) \sum_{k \in \Z} \Gamma(x-Bk) \\
& \leq C \Gamma \big( \frac{1}{2} |x-y| \big)
\end{align*}
where $C$ is some constant (which changes from line to line, and depends on $B$).

Summing the last estimate over $j$ gives that
\begin{align*}
|K(x,y)|
& \leq \sum_{j \in \Z} |A|^j \Gamma \big( A^j |x-y|/2 \big) \\
& \leq (\text{const.}) \frac{1}{|x-y|}
\end{align*}
by Lemma~\ref{Jsum} in the Appendix.

(i) Let $\eps>0$ and $|x-y|>\eps$. Since $\Gamma$ is symmetric decreasing, the proof above gives
\[
|K(x,y)| \leq \sum_{j \in \Z} |A|^j \Gamma \big( A^j \eps /2 \big) \leq (\text{const.}) \frac{1}{\eps} < \infty .
\]
Hence the series $K(x,y)=\sum_j |A|^j K_0(A^j x,A^j y)$ converges uniformly on the set where $|x-y| > \eps$. Since $K_0$ is continuous by Lemma~\ref{K0properties}(i), it follows that $K$ is continuous when $|x-y|>\eps$.

(ii) Write $\Gamma_1(x)=1/\big( 1 + |x|^3 \big)$ for the majorant of $\psi^\prime$ and $\phi$. By arguing as in the first part of this proof, one can show that the term-by-term derivative series $B \sum_{j,k \in \Z} |A|^j A^j \psi^\prime(A^j x-Bk) \overline{\phi(A^j y-Bk)}$ converges absolutely on $\{ x \neq y \}$, and is bounded by $(\text{const.}) / |x-y|^2$.

By Lemma~\ref{K0properties}(ii) we know that this term-by-term derivative equals
\[
\sum_{j \in \Z} \frac{\partial \ }{\partial x} \, |A|^j K_0(A^j x, A^j y)
\]
and that $K_0$ is continuously differentiable with respect to $x$. This last series converges uniformly when $|x-y|>\eps$, by arguments similar to part (i) above, and hence it converges to a continuous function. This continuous function is the derivative $\partial K/\partial x$.
\end{proof}

A function will possess majorants of the above type as soon as its Fourier transform possesses enough derivatives in $L^1$, as the next lemma summarizes.
\begin{lemma}[Sufficient conditions for majorants] \label{sobolevmajorant} \

If $\widehat{\psi} \in W^{2,1}$ then $\psi$ is continuous and $|\psi(x)| \lesssim 1/(1+x^2)$.

If $\widehat{\psi} \in W^{3,1}$ then $\psi$ is continuous and $|\psi(x)| \lesssim 1 \big/ \big( 1+|x|^3 \big)$.
\end{lemma}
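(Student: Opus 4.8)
The plan is to recover $\psi$ from $\widehat{\psi}$ by Fourier inversion and then read off both the continuity and the decay directly from the integrability of the derivatives of $\widehat{\psi}$. Throughout, set $k=2$ for the first assertion and $k=3$ for the second, so that the single argument below handles both cases.

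First I would observe that $\widehat{\psi} \in W^{k,1} \subseteq L^1$, so the inversion formula
\[
\psi(x) = \int_\R \widehat{\psi}(\xi)\, e^{2\pi i \xi x}\, d\xi
\]
holds, after identifying $\psi$ with this representative (which agrees almost everywhere with the given $\psi \in L^2$). The integrand is dominated by the fixed $L^1$ function $|\widehat{\psi}|$, so dominated convergence shows $\psi$ is continuous, and moreover $|\psi(x)| \leq \lVert \widehat{\psi} \rVert_{L^1}$ for every $x$. This already settles continuity and controls $\psi$ near the origin.

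Next I would extract the decay by integrating by parts $k$ times in the inversion integral. Since $\widehat{\psi} \in W^{k,1}$, each derivative $\widehat{\psi}^{(j)}$ with $0 \leq j \leq k-1$ has an absolutely continuous representative that, being a $W^{1,1}$ function on the line, tends to $0$ as $\xi \to \pm\infty$; hence every boundary term vanishes. Using $\frac{d}{d\xi} e^{2\pi i \xi x} = 2\pi i x\, e^{2\pi i \xi x}$, each integration by parts produces a factor $-2\pi i x$, so that
\[
(-2\pi i x)^k \psi(x) = \int_\R \widehat{\psi}^{(k)}(\xi)\, e^{2\pi i \xi x}\, d\xi ,
\]
and therefore $|\psi(x)| \leq (2\pi |x|)^{-k}\, \lVert \widehat{\psi}^{(k)} \rVert_{L^1}$ for $x \neq 0$.

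Finally I would patch the two estimates together: for $|x| \leq 1$ the uniform bound $|\psi(x)| \leq \lVert \widehat{\psi} \rVert_{L^1}$ gives $|\psi(x)| \lesssim 1/(1+|x|^k)$, since $1+|x|^k \leq 2$ there, while for $|x| \geq 1$ the decay estimate gives $|\psi(x)| \lesssim |x|^{-k} \lesssim 1/(1+|x|^k)$. Combining the two yields the claimed majorant for $k=2$ and $k=3$ respectively. The only genuinely nontrivial point is the vanishing of the boundary terms in the integration by parts, which rests on the elementary fact that a function in $W^{1,1}(\R)$ has a continuous representative with limit zero at $\pm\infty$; all the remaining steps are routine.
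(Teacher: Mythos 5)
Your proof is correct and follows essentially the same route as the paper: Fourier inversion gives continuity and the uniform bound, $k$ integrations by parts give the $|x|^{-k}$ decay via $\lVert \widehat{\psi}^{(k)} \rVert_{L^1}$, and the two estimates combine into the stated majorant. Your justification of the vanishing boundary terms (via the fact that $W^{1,1}(\R)$ functions tend to zero at infinity) is a detail the paper leaves implicit, but otherwise the arguments coincide.
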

\begin{proof}[Proof of Lemma~\ref{sobolevmajorant}]
From $\widehat{\psi} \in L^1$ we have that $\psi(x) = \int_\R \widehat{\psi}(\xi) e^{2\pi i \xi x} \, d\xi$ is continuous (possibly after redefining $\psi$ on a set of measure zero). Integrating by parts twice gives that
\[
\psi(x) = (2\pi ix)^{-2} \int_\R \widehat{\psi}^{\, \prime \prime}(\xi) e^{2\pi i \xi x} \, d\xi .
\]
Hence
\[
|\psi(x)| \leq \min \Big( \lVert \widehat{\psi} \rVert_{L^1} ,  |2\pi x|^{-2} \lVert \widehat{\psi}^{\, \prime \prime} \rVert_{L^1} \Big)  \lesssim \frac{1}{1+x^2} .
\]
If $\widehat{\psi} \in W^{3,1}$, then integrating by parts a third time yields a majorant that decays like $|x|^{-3}$ at infinity.
\end{proof}
The hypotheses in the last lemma can be weakened to assume just that $\widehat{\psi}$ belongs to an inhomogeneous Besov space, $B^2_{1,\infty}$ or $B^3_{1,\infty}$ respectively. Those weaker assumptions do not appear useful for our work, though, because the Besov space norm seems too complicated for explicit estimations to be practical.

\subsection{Wavelet frame operator on $L^2$--- CZ Assumption 2}
\begin{proposition} \label{frameCZOkernelrep}
Assume $\widehat{\psi}, \widehat{\phi} \in L^2$ decay near the origin and infinity according to
\begin{align}
|\widehat{\psi}(\xi)| & \lesssim
\begin{cases}
|\xi|^\eps , & |\xi| \leq 1 , \\
|\xi|^{-\eps -1/2} , & |\xi| \geq 1 ,
\end{cases} \label{synth} \\
|\widehat{\phi}(\xi)| & \lesssim
\begin{cases}
|\xi|^\eps , & |\xi| \leq 1 , \\
|\xi|^{-\eps -1/2} , & |\xi| \geq 1 ,
\end{cases} \label{anal}
\end{align}
for some $\eps>0$. Assume $\widehat{\psi}, \widehat{\phi} \in W^{2,1}$.

Then the frame operator  $ST$ satisfies Calder\'{o}n--Zygmund Assumption 2: it is linear and bounded on $L^2$, and if $f \in L^2$ has compact support and $x \notin \supp(f)$, then
\[
(STf)(x) = \int_\R K(x,y) f(y) \, dy .
\]
\end{proposition}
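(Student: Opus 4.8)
The plan is to establish the two parts of Assumption~2 separately: first that $ST$ is a bounded linear operator on $L^2$, and then that it admits the stated integral representation off the support of $f$. Linearity is immediate from the defining formula $STf = B\sum_{j,k}\langle f,\phi_{j,k}\rangle\,\psi_{j,k}$, so the substance lies in the $L^2$ bound and the kernel identity. These two conclusions draw on the two distinct hypotheses: the frequency decay \eqref{synth}--\eqref{anal} drives the $L^2$ boundedness, while the smoothness $\widehat{\psi},\widehat{\phi}\in W^{2,1}$ feeds (through Lemma~\ref{sobolevmajorant}) into the kernel estimates of Lemma~\ref{Kproperties}.

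For the $L^2$ bound I would factor $ST = S_\psi \circ T_\phi$ and bound the analysis and synthesis operators separately. The analysis map $T_\phi : f \mapsto \{B\langle f,\phi_{j,k}\rangle\}$ is bounded $L^2\to\ell^2$ exactly when $\{\phi_{j,k}\}$ is a Bessel sequence, and dually $S_\psi : \ell^2\to L^2$ is bounded exactly when $\{\psi_{j,k}\}$ is Bessel; then $\lVert ST\rVert_{L^2\to L^2}\le\lVert S_\psi\rVert\,\lVert T_\phi\rVert$. The decay hypotheses are precisely what verify the Bessel property by the standard wavelet almost-orthogonality estimate: working on the Fourier side, the diagonal Calder\'{o}n sum $\sum_{j}|\widehat{\psi}(A^{-j}\xi)|^2$ converges geometrically and is bounded uniformly in $\xi$ (as $j\to+\infty$ one uses $|\widehat{\psi}(A^{-j}\xi)|\lesssim|A^{-j}\xi|^\eps$, and as $j\to-\infty$ one uses $|\widehat{\psi}(A^{-j}\xi)|\lesssim|A^{-j}\xi|^{-\eps-1/2}$), while the smoothness $\widehat{\psi},\widehat{\phi}\in W^{2,1}$ controls the off-diagonal (nonzero-frequency) contributions through a Schur test. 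The same estimate applied to $\phi$ yields boundedness of $ST$.

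The integral representation is the main work. Fix $f\in L^2$ with compact support $F$ and a point $x\notin F$, and set $d=\mathrm{dist}(x,F)>0$. By Lemma~\ref{sobolevmajorant}, the hypothesis $\widehat{\psi},\widehat{\phi}\in W^{2,1}$ supplies the majorants $|\psi|,|\phi|\lesssim 1/(1+x^2)$, so Lemma~\ref{Kproperties} applies: the series $K(x,y)=\sum_{j}|A|^j K_0(A^j x,A^j y)$ converges absolutely for $x\neq y$, with $|K(x,y)|\lesssim 1/|x-y|$. In particular $K(x,\cdot)$ is bounded by a constant times $1/d$ on $F$, so $\int K(x,y)f(y)\,dy$ is an absolutely convergent integral. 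I would then introduce the level-truncated operators $S_N T f = B\sum_{|j|\le N}\sum_{k}\langle f,\phi_{j,k}\rangle\,\psi_{j,k}$. At each fixed level $j$ the $k$-sum converges absolutely (Lemma~\ref{K0properties}), so Fubini permits interchanging that sum with the integral $\langle f,\phi_{j,k}\rangle=\int_F f\,\overline{\phi_{j,k}}$, giving
\[
(S_N T f)(x)=\int_F K_N(x,y)\,f(y)\,dy,\qquad K_N(x,y)=\sum_{|j|\le N}|A|^j K_0(A^j x,A^j y).
\]

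Finally I would take $N\to\infty$ along two routes and match them. On the kernel side, $K_N(x,y)\to K(x,y)$ pointwise for $y\in F$, with the uniform domination $|K_N(x,y)|\lesssim 1/d$ from Lemma~\ref{Kproperties}; since $f\in L^2(F)\subset L^1(F)$, dominated convergence gives $(S_N Tf)(x)\to\int_F K(x,y)f(y)\,dy$ for every $x\notin F$. On the operator side, the Bessel bounds of the first part place $\{B\langle f,\phi_{j,k}\rangle\}$ in $\ell^2$; as the coefficients discarded in $S_N Tf$ form an $\ell^2$-tail tending to $0$ and $S_\psi$ is bounded, $S_N Tf\to STf$ in $L^2$. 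Extracting a subsequence along which $S_{N_m}Tf\to STf$ almost everywhere and comparing with the pointwise limit identifies $(STf)(x)=\int K(x,y)f(y)\,dy$ for a.e.\ $x\notin F$, which is exactly Assumption~2. The \emph{main obstacle} is this last reconciliation: the frame series converges only in $L^2$, not pointwise, so the interchange of summation and integration cannot be carried out termwise but must be routed through the truncated kernels and an a.e.-convergent subsequence, and the domination must be kept uniform in $N$ for dominated convergence to apply. This is precisely where the separation $d>0$ of $x$ from $\supp(f)$ together with the $1/|x-y|$ kernel bound of Lemma~\ref{Kproperties} are both indispensable.
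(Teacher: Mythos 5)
Your proposal is correct, and for the substantive half of the statement (the integral representation) it follows the same route as the paper: the $W^{2,1}$ hypothesis gives the majorants $|\psi(x)|,|\phi(x)|\lesssim 1/(1+x^2)$ via Lemma~\ref{sobolevmajorant}, Lemma~\ref{Kproperties} then gives absolute convergence of the kernel series with $|K(x,y)|\lesssim 1/|x-y|$, and dominated convergence/Fubini yields $\int_\R K(x,y)f(y)\,dy = B\sum_{j,k}\langle f,\phi_{j,k}\rangle\,\psi_{j,k}(x)$ for $x\notin\supp(f)$. The paper compresses the final identification of this pointwise-convergent sum with the $L^2$-defined function $STf$ into the phrase ``by dominated convergence''; your level-truncation $S_NT$ together with the a.e.-convergent subsequence is precisely the careful way to fill in that step, so this is added rigor rather than a different method. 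The one genuine divergence is the $L^2$ bound: the paper simply cites \cite[Propositions~6 and 7]{bl5}, whereas you sketch a self-contained Bessel-sequence argument, and that sketch is the thinnest point of your write-up. With frequency decay only $|\xi|^{-\eps-1/2}$ at infinity, the off-diagonal (nonzero $l$) sums in the Daubechies-type almost-orthogonality estimate are \emph{not} controlled by the decay hypotheses alone: the natural bound on $\sup_\xi\sum_j|\widehat{\psi}(A^j\xi)||\widehat{\psi}(A^j\xi+l/B)|$ is of size $|l|^{-\eps-1/2}$, which is not summable in $l$ for small $\eps$. So the spatial decay coming from $\widehat{\psi},\widehat{\phi}\in W^{2,1}$ must genuinely enter the off-diagonal estimate --- you do assign it that role (``smoothness \dots controls the off-diagonal contributions through a Schur test''), but give no details, and this is where the real work in a from-scratch $L^2$ proof would lie.
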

\begin{proof}[Proof of Proposition~\ref{frameCZOkernelrep}]
The frame operator $ST$ is linear and bounded on $L^2$, for example by \cite[Propositions~6 and 7]{bl5}.

We know $\psi$ and $\phi$ are $\lesssim 1/(1+x^2)$, by Lemma~\ref{sobolevmajorant} and the hypothesis that $\widehat{\psi}, \widehat{\phi} \in W^{2,1}$. Thus by Lemma~\ref{Kproperties}, the kernel satisfies $|K(x,y)| \lesssim 1/|x-y|$. If $f \in L^2$ has compact support and $x \not\in \supp(f)$, then the function $y \mapsto 1/|x-y|$ is bounded on the support of $f$, and so by dominated convergence,
\[
\int_\R K(x,y) f(y) \, dy
= B \sum_{j,k} \int_\R f(y) \overline{\phi_{j,k}(y)} \, dy \, \psi_{j,k}(x)
= (STf)(x) .
\]
\end{proof}

\subsection{Wavelet frame operator on weighted $\mathbf{L^2}$ --- CZ Assumption 3}
\begin{proposition} \label{frameCZOintzero}
Assume $\widehat{\psi}, \widehat{\phi} \in W^{1,2}$ with $\widehat{\psi}(0)=\widehat{\phi}(0)=0$, and that $\widehat{\psi}^{\, \prime}$ and $\widehat{\phi}^{\, \prime}$ decay near the origin and infinity according to \eqref{synthanal}.

Then the frame operator  $ST$ satisfies Calder\'{o}n--Zygmund Assumption 3:
if $f \in L^2$ has compact support and integral zero, then $STf  \in L^1 \cap L^2$ with $\int_\R (STf)(x) \, dx = 0$, and $(ST)^*f  \in L^1 \cap L^2$ with $\int_\R (ST)^*f \, dx = 0$.
\end{proposition}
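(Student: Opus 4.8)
The plan is to verify CZ Assumption 3 in the frequency domain, exploiting the hypothesis $\widehat\psi(0)=\widehat\phi(0)=0$, which is precisely the statement that $\psi$ and $\phi$ have integral zero. I would first record the structural observation that the frame operator factors as $ST = S_\psi T_\phi$, and that its adjoint is $(ST)^* = S_\phi T_\psi$ (up to complex conjugation conventions), so that the claim for $(ST)^*$ follows from the claim for $ST$ by interchanging the roles of $\psi$ and $\phi$. Since the hypotheses \eqref{synthanal} are symmetric in $\psi$ and $\phi$, it suffices to prove the three assertions $STf \in L^1$, $STf \in L^2$, and $\int_\R (STf)\,dx = 0$ for compactly supported $f \in L^2$ with $\int_\R f\,dx = 0$.

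The $L^2$ membership is immediate: $ST$ is bounded on $L^2$ by Proposition~\ref{frameCZOkernelrep} (whose decay hypotheses \eqref{synth}--\eqref{anal} follow from \eqref{synthanal} after integrating the bound on $\widehat\psi^{\,\prime}$ from the origin, using $\widehat\psi(0)=0$), so $STf \in L^2$ whenever $f \in L^2$. For the integrability and the vanishing-integral assertions I would pass to the frequency side. The key identity is that the Fourier transform of $STf$ can be written as
\[
\widehat{STf}(\xi) = B \sum_{j,k \in \Z} \langle f, \phi_{j,k}\rangle \, \widehat{\psi_{j,k}}(\xi),
\]
and since $\widehat{\psi_{j,k}}(\xi) = |A|^{-j/2} e^{-2\pi i B k A^{-j}\xi}\,\widehat\psi(A^{-j}\xi)$, the dilation structure concentrates the analysis. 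The crucial point is that $\int_\R (STf)\,dx = \widehat{STf}(0)$, and each term carries the factor $\widehat\psi(A^{-j}\cdot 0) = \widehat\psi(0) = 0$; so formally the integral vanishes term by term. The real work is to justify that $\widehat{STf}$ is continuous at $0$ (so that evaluation at $\xi=0$ is legitimate) and that $STf \in L^1$ (so that $\int (STf)\,dx$ is defined and equals $\widehat{STf}(0)$).

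To obtain $STf \in L^1$ I would combine the kernel decay with the cancellation in $f$. Writing $g = STf$, I would estimate $\|g\|_{L^1}$ by splitting $\R$ into a bounded neighborhood of $\supp(f)$ (on which $\|g\|_{L^1}$ is controlled by $\|g\|_{L^2}$ and Cauchy--Schwarz, using finite measure) and its complement. On the complement, I would use the integral representation $g(x) = \int K(x,y) f(y)\,dy$ from Proposition~\ref{frameCZOkernelrep} together with $\int f = 0$ to write $g(x) = \int [K(x,y)-K(x,y_0)] f(y)\,dy$, where $y_0$ is a fixed point of $\supp(f)$; the Assumption~1c-type bound $|K(x,y)-K(x,y_0)| \lesssim |y-y_0|/|x-y_0|^2$ (which holds here by Lemma~\ref{Kproperties}(ii), valid because \eqref{synthanal} gives $\widehat\psi,\widehat\phi \in W^{3,1}$ and hence the $|x|^{-3}$-majorants needed there) then yields $|g(x)| \lesssim 1/|x-y_0|^2$ far from the support, which is integrable at infinity. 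This establishes $g \in L^1$. The hard part will be confirming that $\widehat g$ is continuous at the origin so that the term-by-term cancellation rigorously forces $\widehat g(0) = \int g\,dx = 0$; the most robust route is to note that once $g \in L^1$ is known, $\widehat g$ is automatically continuous by the Riemann--Lebesgue lemma, after which the decay hypotheses \eqref{synthanal} near the origin guarantee absolute convergence of the frequency-domain series uniformly near $\xi = 0$, letting one evaluate $\widehat g(0)$ by interchanging sum and limit and invoking $\widehat\psi(0)=0$ in each term.
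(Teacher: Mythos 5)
There are two genuine gaps, and the first one breaks your entire route under the proposition as actually stated. The proposition assumes only $\widehat\psi,\widehat\phi \in W^{1,2}$, $\widehat\psi(0)=\widehat\phi(0)=0$, and the first-derivative decay \eqref{synthanal}. Your argument invokes Proposition~\ref{frameCZOkernelrep} (which requires $\widehat\psi,\widehat\phi\in W^{2,1}$) and, more seriously, the kernel regularity bound of Lemma~\ref{Kproperties}(ii), which via Lemma~\ref{sobolevmajorant} requires $W^{3,1}$-type control (of $\widehat\psi$, $X\widehat\phi$, etc.) to produce the $1/(1+|x|^3)$ majorants. Your parenthetical claim that \eqref{synthanal} gives $\widehat\psi,\widehat\phi \in W^{3,1}$ is false: \eqref{synthanal} is a pointwise bound on the \emph{first} derivatives and gives no control whatsoever of second or third derivatives in $L^1$. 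So under the stated hypotheses, neither the representation $(STf)(x)=\int_\R K(x,y)f(y)\,dy$ off the support nor the smoothness estimate $|K(x,y)-K(x,y_0)|\lesssim |y-y_0|/|x-y_0|^2$ is available to you. This is precisely why the paper's proof never touches the kernel: it observes that $\widehat f$ lies in $W^{1,2}$ and vanishes at the origin, cites boundedness of analysis and synthesis on the Littlewood--Paley space \cite[Propositions 5 and 6]{bl6} (which needs only the stated hypotheses) to conclude that the wavelet series for $STf$ converges in that space, uses the embedding of that space into $L^1$ \cite[formula (1)]{bl6} to get $L^1$ convergence of the series, and then integrates term by term, each term vanishing because $\int_\R \psi_{j,k}\,dx = |A|^{-j/2}\,\widehat\psi(0) = 0$.

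The second gap sits in your final step and would persist even if you granted yourself the stronger hypotheses of Theorem~\ref{invertible} (under which the proposition is ultimately applied, and under which your $L^1$ argument via the kernel would be fine). Uniform absolute convergence of $\widehat{STf}(\xi)=\sum_j h_j(\xi)$ near $\xi=0$ does \emph{not} follow from the decay of $\widehat\psi$ near the origin. Writing $h_j(\xi) = B\,|A|^{-j/2}\,\widehat\psi(A^{-j}\xi)\sum_k \langle f,\phi_{j,k}\rangle e^{-2\pi iBkA^{-j}\xi}$, the crude coefficient bound $\sum_k|\langle f,\phi_{j,k}\rangle| \lesssim |A|^{j/2}\lVert f\rVert_{L^1}$ gives only $|h_j(\xi)|\lesssim |\widehat\psi(A^{-j}\xi)|$, and at the coarse scales $j\to-\infty$ one has $\sup_{0<|\xi|<\delta}|\widehat\psi(A^{-j}\xi)|\to\sup|\widehat\psi|>0$, so the termwise suprema are not summable and the coarse-scale tail is genuinely not uniformly small near $0$ with these bounds alone. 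To repair this you must spend the cancellation $\int_\R f\,dx=0$ a second time, inside the coefficients: at coarse scales $\langle f,\phi_{j,k}\rangle = \int_\R f(x)\,\bigl[\overline{\phi_{j,k}(x)}-\overline{\phi_{j,k}(x_0)}\bigr]\,dx$ gains a factor of order $|A|^{j}$, yielding $\sum_k|\langle f,\phi_{j,k}\rangle|\lesssim |A|^{3j/2}$ and hence $|h_j(\xi)|\lesssim|A|^{j}$ uniformly, which does make the tail summable. Without that observation, the interchange of sum and limit at $\xi=0$ is unjustified, which is exactly the step the paper sidesteps by securing $L^1$ convergence of the wavelet series itself so that term-by-term integration is legitimate.
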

\begin{proof}[Proof of Proposition~\ref{frameCZOintzero}] If $f \in L^2$ has compact support and integral zero, then its Fourier transform vanishes at the origin and belongs to the Sobolev space $W^{1,2}$. Hence by boundedness of analysis and synthesis on the space of such ``Littlewood--Paley functions'' (as in \cite[Propositions~5 and 6]{bl6}), the wavelet series defining $STf$ converges in the Littlewood--Paley space. Then the wavelet series converges also in $L^1$, by an easy imbedding \cite[formula (1)]{bl6}.

Integrating the series term-by-term gives that
\begin{align*}
\int_\R (STf)(x) \, dx
& = \sum_{j,k \in \Z} B \langle f , \phi_{j,k} \rangle_{L^2} \int_\R \psi_{j,k}(x) \, dx \\
& = 0
\end{align*}
because $\int_\R \psi(x) \, dx = \widehat{\psi}(0)= 0$. By interchanging the roles of $\phi$ and $\psi$ we see the same conclusion holds for the $L^2$-adjoint, since $(ST)^*=(S_\psi T_\phi)^*=S_\phi T_\psi$.
\end{proof}

\subsection{Wavelet frame kernel --- CZ Assumption 1} \label{freqestimates}
As explained at the beginning of the section, we need good, explicit estimates on the Calder\'{o}n--Zygmund constants of the wavelet frame kernel. Our estimates involve the following frequency-domain norm quantities:
\begin{align*}
\sigma_1(\psi,\phi) & = \sum_{l \in \Z} \lVert \widehat{\psi}(\cdot) \overline{\widehat{\phi}(\cdot+lB^{-1})} \rVert_{L^1} \\
\sigma_2(\psi,\phi) & = 2\pi \sum_{l \in \Z} \lVert X(\cdot) \widehat{\psi}(\cdot) \overline{\widehat{\phi}(\cdot+lB^{-1})} \rVert_{L^1} \\
\sigma_3(\psi,\phi) & = 2\pi \sum_{l \in \Z} \lVert X(\cdot) \widehat{\psi}(\cdot+lB^{-1}) \overline{\widehat{\phi}(\cdot)} \rVert_{L^1}
%\tau_1(\psi,\phi) & = \frac{1}{4\pi^2} \sum_{l \in \Z} \big\lVert \big[ \widehat{\psi}(\cdot) \overline{\widehat{\phi}(\cdot+lB^{-1})}\big]^{\prime \prime} \big\rVert_{L^1} \\
%\tau_2(\psi,\phi) & = \frac{1}{4\pi^2} \sum_{l \in \Z} \big\lVert \big[ X(\cdot) \widehat{\psi}(\cdot) \overline{\widehat{\phi}(\cdot+lB^{-1})}\big]^{\prime \prime \prime} \big\rVert_{L^1} \\
%\tau_3(\psi,\phi) & = \frac{1}{4\pi^2} \sum_{l \in \Z} \big\lVert \big[ X(\cdot) \widehat{\psi} (\cdot + lB^{-1})  \overline{\widehat{\phi}(\cdot)} \big]^{\prime \prime \prime} \big\rVert_{L^1}
\end{align*}
and
\begin{align*}
%\sigma_1(\psi,\phi) & = \sum_{l \in \Z} \lVert \widehat{\psi}(\cdot) \overline{\widehat{\phi}(\cdot+lB^{-1})} \rVert_{L^1} \\
%\sigma_2(\psi,\phi) & = 2\pi \sum_{l \in \Z} \lVert X(\cdot) \widehat{\psi}(\cdot) \overline{\widehat{\phi}(\cdot+lB^{-1})} \rVert_{L^1} \\
%\sigma_3(\psi,\phi) & = 2\pi \sum_{l \in \Z} \lVert X(\cdot) \widehat{\psi}(\cdot+lB^{-1}) \overline{\widehat{\phi}(\cdot)} \rVert_{L^1} \\
\tau_1(\psi,\phi) & = \frac{1}{4\pi^2} \sum_{l \in \Z} \big\lVert \big[ \widehat{\psi}(\cdot) \overline{\widehat{\phi}(\cdot+lB^{-1})}\big]^{\prime \prime} \big\rVert_{L^1} \\
\tau_2(\psi,\phi) & = \frac{1}{4\pi^2} \sum_{l \in \Z} \big\lVert \big[ X(\cdot) \widehat{\psi}(\cdot) \overline{\widehat{\phi}(\cdot+lB^{-1})}\big]^{\prime \prime \prime} \big\rVert_{L^1} \\
\tau_3(\psi,\phi) & = \frac{1}{4\pi^2} \sum_{l \in \Z} \big\lVert \big[ X(\cdot) \widehat{\psi} (\cdot + lB^{-1})  \overline{\widehat{\phi}(\cdot)} \big]^{\prime \prime \prime} \big\rVert_{L^1}
\end{align*}
where $X(\xi)=\xi$ denotes the identity function.

Now we can estimate the Calder\'{o}n--Zygmund constants $C_1,C_2,C_3$.
\begin{theorem}[CZ Assumption 1] \label{frameCZOfreq}
Assume $\psi, \phi \in L^2$ with $\widehat{\psi}, \widehat{\phi} , X \widehat{\psi}, X \widehat{\phi} \in W^{3,1} \cap W^{3,2}$.

Then the wavelet frame kernel $K(x,y)$ in \eqref{framekerneldef} satisfies Calder\'{o}n--Zygmund Assumptions 1a, 1b, 1c with the constants
\begin{align*}
C_1(\psi,\phi) & = \frac{2|A|}{|A|-1} \sqrt{\sigma_1(\psi,\phi) \tau_1(\psi,\phi)} , \\
C_2(\psi,\phi) & = \frac{4|A| (2|A|+1)}{|A|^2-1} \sqrt[3]{\sigma_2(\psi,\phi) \tau_2(\psi,\phi)^2} , \\
C_3(\psi,\phi) & = \frac{4|A| (2|A|+1)}{|A|^2-1} \sqrt[3]{\sigma_3(\psi,\phi) \tau_3(\psi,\phi)^2} ,
\end{align*}
provided these constants are finite.
\end{theorem}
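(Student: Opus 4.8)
The plan is to pass to the frequency domain one scale at a time. Writing $K(x,y)=\sum_{j\in\Z}|A|^j K_0(A^j x,A^j y)$ as in Lemma~\ref{Kproperties}, it suffices to estimate the single-scale kernel $K_0$ and its first partial derivatives, and then to sum the resulting bounds over $j$. First I would derive a Fourier representation of $K_0$: substituting the inversion formulas $\psi(x-Bk)=\int\widehat\psi(\xi)e^{2\pi i\xi(x-Bk)}\,d\xi$ and $\overline{\phi(y-Bk)}=\int\overline{\widehat\phi(\eta)}e^{-2\pi i\eta(y-Bk)}\,d\eta$ into the defining series, interchanging sum and integrals (justified by the decay from Lemma~\ref{sobolevmajorant}), and applying the Poisson identity $B\sum_k e^{-2\pi iBk(\xi-\eta)}=\sum_l\delta(\xi-\eta-lB^{-1})$ to integrate out $\eta$, I obtain after relabeling $l\mapsto-l$ the representation
\[
K_0(x,y)=\sum_{l\in\Z}e^{-2\pi ilB^{-1}y}\int_\R\widehat\psi(\xi)\,\overline{\widehat\phi(\xi+lB^{-1})}\,e^{2\pi i\xi(x-y)}\,d\xi .
\]
Differentiating under the integral sign (valid by the hypotheses $X\widehat\psi,X\widehat\phi\in W^{3,1}$, which via Lemma~\ref{sobolevmajorant} give the decay of $\psi',\phi'$ required by Lemma~\ref{Kproperties}(ii)) produces companion formulas for $\partial K_0/\partial x$ and $\partial K_0/\partial y$ whose integrands acquire a factor $2\pi i\xi$; after the change of variable $\xi\mapsto\xi+lB^{-1}$ these are exactly the combinations $X\widehat\psi\,\overline{\widehat\phi(\cdot+lB^{-1})}$ and $X\widehat\psi(\cdot+lB^{-1})\,\overline{\widehat\phi}$ appearing in $\sigma_2,\sigma_3,\tau_2,\tau_3$.

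Next I would extract two competing bounds on each of $K_0,\partial K_0/\partial x,\partial K_0/\partial y$. Bounding each $l$-integral by the $L^1$-norm of its integrand and summing in $l$ gives the ``small-argument'' bounds $|K_0|\le\sigma_1$, $|\partial K_0/\partial x|\le\sigma_2$, $|\partial K_0/\partial y|\le\sigma_3$. Integrating by parts twice (for $K_0$) or three times (for the derivatives) moves the oscillation onto the integrand and yields the ``large-argument'' bounds $|K_0|\le\tau_1/|x-y|^2$, $|\partial K_0/\partial x|\le\tau_2/|x-y|^3$, $|\partial K_0/\partial y|\le\tau_3/|x-y|^3$; here the $2\pi$ from differentiation combines with the $(2\pi)^3$ from three integrations by parts to leave exactly the $1/4\pi^2$ normalization built into $\tau_2,\tau_3$. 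Thus each quantity is controlled by the minimum of a constant and a negative power of $|x-y|$.

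Then I would sum over scales and convert to Assumptions 1a--1c. Inserting the two-regime bounds into $K(x,y)=\sum_j|A|^jK_0(A^jx,A^jy)$ and $\partial_yK=\sum_j|A|^{2j}(\partial_yK_0)(A^jx,A^jy)$ (similarly for $\partial_x$) produces sums of the shape $\sum_j|A|^{mj}\min(\sigma,\tau|A|^{-nj}|x-y|^{-n})$, with $(m,n)=(1,2)$ for $K$ and $(m,n)=(2,3)$ for the derivatives. Splitting at the scale where the two bounds balance and summing each geometric tail --- precisely what Lemma~\ref{Jsum} packages --- gives $|K|\le C_1/|x-y|$ with $C_1=\tfrac{2|A|}{|A|-1}\sqrt{\sigma_1\tau_1}$, together with $|\partial_xK|\lesssim|x-y|^{-2}\sqrt[3]{\sigma_2\tau_2^2}$ and $|\partial_yK|\lesssim|x-y|^{-2}\sqrt[3]{\sigma_3\tau_3^2}$; the exponents $\tfrac12,\tfrac12$ and $\tfrac13,\tfrac23$ on $\sigma,\tau$ are forced by the balance condition $|A|^{nj}|x-y|^n\sim\tau/\sigma$. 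Finally, given $|x-x_0|\le\tfrac12|x-y|$, the mean value theorem gives $|K(x_0,y)-K(x,y)|\le|x-x_0|\sup_{\tilde x}|\partial_xK(\tilde x,y)|$ over the segment, along which $|\tilde x-y|\ge\tfrac12|x-y|$; replacing $|\tilde x-y|^{-2}$ by $4|x-y|^{-2}$ yields exactly the extra factor $4$ visible in $C_2=\tfrac{4|A|(2|A|+1)}{|A|^2-1}\sqrt[3]{\sigma_2\tau_2^2}$, and likewise for $C_3$ using Assumption~1c.

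The main obstacle I expect is bookkeeping the exact constants in the scale sum: isolating the sharp factors $\tfrac{2|A|}{|A|-1}$ and $\tfrac{|A|(2|A|+1)}{|A|^2-1}$ from the two geometric tails, and correctly apportioning the factor $4$ between the summation and the mean-value step. The representation of $K_0$ and the per-scale $L^1$ and integration-by-parts bounds are essentially mechanical once the decay hypotheses are in place; the delicate part is making every interchange of summation, integration, and differentiation legitimate, which is what the Sobolev conditions $\widehat\psi,\widehat\phi,X\widehat\psi,X\widehat\phi\in W^{3,1}\cap W^{3,2}$ and Lemmas~\ref{Kproperties}--\ref{sobolevmajorant} are designed to guarantee.
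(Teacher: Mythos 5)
Your proposal is correct and takes essentially the same route as the paper's proof: Poisson summation yields the frequency representation of $K_0$ (the paper's Lemma~\ref{kernel1}), $L^1$ bounds versus two/three integrations by parts give the $\min\{\sigma,\tau/|x-y|^n\}$ estimates (Lemmas~\ref{kernel1} and \ref{kernel2}), the geometric-sum Lemma~\ref{Jsum} performs the sum over scales with exactly the balance exponents you describe, and the mean value theorem with the factor $4$ converts the gradient bounds into Assumptions 1b and 1c. The only cosmetic differences are that the paper obtains $\partial K_0/\partial x$ by running the same argument with $\psi'$ in place of $\psi$ (rather than differentiating under the integral sign), gets Assumption 1c by swapping the roles of $\psi$ and $\phi$ instead of differentiating in $y$, and justifies Poisson summation rigorously via periodization of $K_0$ and absolute summability of Fourier coefficients rather than the formal delta-comb identity you invoke.
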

(\emph{Aside.} These constants blow up as $|A| \to 1$. This apparent singularity would disappear if we had multiplied the frame operator by $\log |A|$, as discussed in the Introduction. Anyhow, the issue is irrelevant  because we fix the dilation factor $A$ in this paper.)

The proof will build on off-diagonal decay lemmas for $K_0$ and its derivatives. The first lemma explains the central idea: we invoke Poisson summation to transfer to the frequency domain, and then integrate by parts.
\begin{lemma}[Decay of $K_0$] \label{kernel1}
Assume $\psi, \phi \in L^2$ with $\widehat{\psi}, \widehat{\phi} \in W^{2,1} \cap W^{2,2}$, and that $\sigma_1(\psi,\phi)<\infty$ and $\tau_1(\psi,\phi)<\infty$. Then for all $x,y \in \R$,
\[
|K_0(x,y)| \leq \min \Big\{ \sigma_1(\psi,\phi) ,  \frac{\tau_1(\psi,\phi)}{|x-y|^2} \Big\} .
\]
\end{lemma}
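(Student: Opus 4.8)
The plan is to exploit the $B$-periodicity built into $K_0$ by fixing the variable $y$ and viewing $K_0$ as a function summed over the lattice $B\Z$, then applying Poisson summation to convert the lattice sum into a frequency-side sum. First I would record that, by definition,
\[
K_0(x,y) = B \sum_{k \in \Z} \psi(x-Bk) \overline{\phi(y-Bk)} .
\]
Setting $u = x-y$ and $g_k = \psi(x-Bk)\overline{\phi(y-Bk)}$, I observe that $K_0$ is the value at the lattice points $Bk$ of the product of two translated copies of $\psi$ and $\phi$. The natural move is to write each of $\psi$ and $\phi$ via the Fourier inversion formula and carry out the sum over $k$ using Poisson summation in the form $\sum_k h(Bk) = B^{-1}\sum_l \widehat{h}(lB^{-1})$. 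After interchanging the sum and the integrals (justified by absolute convergence, which follows from $\widehat\psi,\widehat\phi \in W^{2,1}$ via Lemma~\ref{sobolevmajorant} and the decay majorants established in Lemma~\ref{K0properties}), the lattice sum collapses and one is left with a single sum over $l \in \Z$ of integrals of the shape
\[
\int_\R \widehat{\psi}(\xi)\, \overline{\widehat{\phi}(\xi + lB^{-1})}\, e^{2\pi i \xi (x-y)} \, d\xi .
\]
This is the frequency-domain representation of $K_0$ that makes the two stated bounds transparent.

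From this representation the two halves of the minimum follow by separate routes. For the first bound $|K_0(x,y)| \leq \sigma_1(\psi,\phi)$, I simply place absolute values inside each integral and sum over $l$; since $|e^{2\pi i \xi(x-y)}| = 1$, the $l$-th integral is bounded by $\lVert \widehat{\psi}(\cdot)\overline{\widehat{\phi}(\cdot + lB^{-1})} \rVert_{L^1}$, and summing over $l$ gives exactly $\sigma_1(\psi,\phi)$. For the second bound $|K_0(x,y)| \leq \tau_1(\psi,\phi)/|x-y|^2$, I would integrate by parts twice in $\xi$, using the oscillatory factor $e^{2\pi i \xi(x-y)}$: each integration by parts produces a factor $1/(2\pi i (x-y))$ and moves a derivative onto the product $\widehat{\psi}(\xi)\overline{\widehat{\phi}(\xi + lB^{-1})}$. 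The boundary terms vanish because $\widehat\psi,\widehat\phi \in W^{2,1}$ forces these products and their first derivatives to decay at infinity. After two integrations by parts the $l$-th term is bounded in absolute value by
\[
\frac{1}{(2\pi)^2 |x-y|^2}\, \big\lVert \big[\widehat{\psi}(\cdot)\overline{\widehat{\phi}(\cdot + lB^{-1})}\big]^{\prime\prime}\big\rVert_{L^1} ,
\]
and summing over $l$ yields precisely $\tau_1(\psi,\phi)/|x-y|^2$.

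The main obstacle is the rigorous justification of the two interchanges of summation and integration, together with the validity of Poisson summation in this setting. Concretely, I must verify that $\sum_k \psi(x-Bk)\overline{\phi(y-Bk)}$ converges absolutely (done via Lemma~\ref{K0properties}) and that the Poisson summation identity applies to the relevant combination — this requires enough regularity and decay of $\widehat\psi,\widehat\phi$ to guarantee that both the time-side lattice sum and the frequency-side sum converge absolutely and agree. The hypotheses $\widehat\psi,\widehat\phi \in W^{2,1}\cap W^{2,2}$ are exactly tailored to supply this: the $W^{2,1}$ membership gives the $1/(1+x^2)$ time-domain majorants needed for absolute convergence of the lattice sum, while the regularity on the frequency side legitimizes the two integrations by parts and the vanishing of boundary terms. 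Once these convergence points are settled, the two estimates drop out immediately, and taking the smaller of the two completes the proof.
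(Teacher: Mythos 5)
Your proposal follows essentially the same route as the paper's proof: Poisson summation converts $K_0$ into the frequency-side series $\sum_{l} e^{-2\pi i l B^{-1} y} \int_\R e^{2\pi i \xi (x-y)} \widehat{\psi}(\xi) \overline{\widehat{\phi}(\xi+lB^{-1})} \, d\xi$, the triangle inequality then gives the $\sigma_1$ bound, and two integrations by parts give the $\tau_1/|x-y|^2$ bound. The paper settles the Poisson-summation justification (which you correctly flag as the main obstacle) in precisely the way your ingredients suggest: the periodization $F(t)=K_0(x+t,y+t)$ is continuous and $B$-periodic by the time-domain majorants, and its Fourier coefficients are absolutely summable (bounded by $\sigma_1$), so $F$ equals its Fourier series everywhere, in particular at $t=0$.
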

\begin{proof}[Proof of Lemma~\ref{kernel1}]
The hypotheses guarantee that $\psi$ and $\phi$ are continuous and are $\lesssim 1/(1+x^2)$, by Lemma~\ref{sobolevmajorant}. Hence the series for $K_0(x,y)$ converges absolutely and yields a continuous function, by Lemma~\ref{K0properties}. We have
\begin{align}
K_0(x,y)
& = B \sum_{k \in \Z} \psi(Bk+x) \overline{\phi}(Bk+y) \notag \\
& = \sum_{l \in \Z} \big( E_x \widehat{\psi} * E_y \widehat{\overline{\phi}} \, \big) (lB^{-1}) \label{eq-convolve}
\end{align}
by the Poisson summation formula (justified below), where $E_x(\xi)=e^{2\pi i \xi x}$. By writing out the convolution, we find
\begin{equation} \label{eq-K0convolve}
K_0(x,y) = \sum_{l \in \Z} e^{-2\pi i lB^{-1}y} \int_\R e^{2\pi i \xi (x-y)} \widehat{\psi}(\xi) \overline{\widehat{\phi}(\xi+lB^{-1})} \, d\xi .
\end{equation}
This last series for $K_0$ converges absolutely, with $|K_0(x,y)| \leq \sigma_1(\psi,\phi)$ by definition of $\sigma_1$.

Now we justify the use of Poisson summation. Let $F(t)=K_0(x+t,y+t)$, so that $F$ is continuous and $B$-periodic. The Fourier coefficients of $F$ have $\ell^1$-norm at most $\sigma_1(\psi,\phi)<\infty$, by the calculations above. Thus the Fourier series of $F$ is a continuous function, which therefore equals $F(t)$ for every $t$. Choosing $t=0$ gives the Poisson summation formula as used in \eqref{eq-convolve}, since $F(0)=K_0(x,y)$.

Returning now to the formula for $K_0(x,y)$ in \eqref{eq-K0convolve}, integrating by parts twice gives that
\[
K_0(x,y) = \frac{1}{\big( 2\pi i(x-y) \big)^2} \sum_{l \in \Z} e^{-2\pi i lB^{-1}y} \int_\R e^{2\pi i \xi (x-y)} \big[ \widehat{\psi}(\xi) \overline{\widehat{\phi}(\xi+lB^{-1})} \big]^{\prime \prime} \, d\xi ,
\]
which implies that $|K_0(x,y)| \leq \tau_1(\psi,\phi)/|x-y|^2$, and finishes the proof of the lemma.

\end{proof}
\begin{lemma}[Decay of $\partial K_0/\partial x$] \label{kernel2}
Assume $\psi, \phi \in L^2$ with $\widehat{\psi}, X \widehat{\psi}, \widehat{\phi} \in W^{2,1}$ and $X \widehat{\psi}, \widehat{\phi} \in W^{3,2}$, and that $\sigma_2(\psi,\phi)<\infty$ and $\tau_2(\psi,\phi)<\infty$. Then for all $x,y \in \R$,
\[
\left| \frac{\partial K_0}{\partial x} \right| \leq \min \Big\{ \sigma_2(\psi,\phi) , \frac{\tau_2(\psi,\phi)}{|x-y|^3} \Big\}.
\]
\end{lemma}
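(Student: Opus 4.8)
The plan is to mimic the proof of Lemma~\ref{kernel1}, but with the extra factor of $X$ absorbed into the synthesizer and an extra power of $|x-y|$ obtained by integrating by parts one more time. The key observation is that differentiating $K_0$ with respect to $x$ brings down a factor that, on the Fourier side, corresponds to multiplying $\widehat\psi$ by $X$. Concretely, I would start from the absolutely convergent expression
\begin{equation} \label{eq-dK0plan}
\frac{\partial K_0}{\partial x} = B \sum_{k \in \Z} \psi^\prime(Bk+x) \overline{\phi}(Bk+y) ,
\end{equation}
which is legitimate by Lemma~\ref{K0properties}(ii) under the stated smoothness (note $X\widehat\psi \in W^{2,1}$ forces $\psi^\prime$ bounded and $\psi^\prime \lesssim 1/(1+x^2)$, while $\widehat\phi \in W^{2,1}$ gives the decay on $\phi$). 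The Fourier transform of $\psi^\prime$ is $2\pi i \xi \,\widehat\psi(\xi) = 2\pi i\, (X\widehat\psi)(\xi)$, so applying Poisson summation exactly as in \eqref{eq-convolve}--\eqref{eq-K0convolve} — with $\psi$ replaced by $\psi^\prime$ — yields
\begin{equation} \label{eq-dK0convolve}
\frac{\partial K_0}{\partial x} = 2\pi i \sum_{l \in \Z} e^{-2\pi i lB^{-1}y} \int_\R e^{2\pi i \xi (x-y)} X(\xi)\widehat{\psi}(\xi) \overline{\widehat{\phi}(\xi+lB^{-1})} \, d\xi .
\end{equation}

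**Next I would** read off the two bounds directly from this representation. Taking absolute values inside the integral and summing gives $|\partial K_0/\partial x| \leq 2\pi \sum_l \lVert X\widehat\psi(\cdot)\overline{\widehat\phi(\cdot+lB^{-1})}\rVert_{L^1} = \sigma_2(\psi,\phi)$, which is the first half of the minimum. For the decay estimate, I would integrate by parts \emph{three} times in the $\xi$-integral in \eqref{eq-dK0convolve}, each integration by parts producing a factor $1/\big(2\pi i(x-y)\big)$ and transferring a derivative onto the bracketed product. This produces
\[
\frac{\partial K_0}{\partial x} = \frac{2\pi i}{\big(2\pi i(x-y)\big)^3} \sum_{l \in \Z} e^{-2\pi i lB^{-1}y} \int_\R e^{2\pi i \xi (x-y)} \big[ X(\xi)\widehat{\psi}(\xi) \overline{\widehat{\phi}(\xi+lB^{-1})} \big]^{\prime\prime\prime} \, d\xi ,
\]
and since $2\pi/(2\pi)^3 = 1/(4\pi^2)$, taking absolute values gives exactly $|\partial K_0/\partial x| \leq \tau_2(\psi,\phi)/|x-y|^3$. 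Combining the two bounds gives the stated minimum.

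**The main obstacle** I expect is justifying the three integrations by parts, which requires the integrand and its first two derivatives in $\xi$ to vanish at $\pm\infty$ and the third derivative to be integrable. The vanishing of the boundary terms and the $L^1$-finiteness of the relevant derivatives is precisely what the hypotheses $X\widehat\psi \in W^{3,2}$ and $\widehat\phi \in W^{3,2}$ are designed to supply: by the product rule $\big[X\widehat\psi(\cdot)\overline{\widehat\phi(\cdot+lB^{-1})}\big]^{\prime\prime\prime}$ expands into terms each of which is a product of derivatives of $X\widehat\psi$ and of $\widehat\phi$ up to order three, and the Cauchy--Schwarz inequality applied factorwise (using $W^{3,2}$ membership) shows each such product lies in $L^1$ with summable-in-$l$ norms, guaranteeing $\tau_2(\psi,\phi)<\infty$ and the absolute convergence needed to differentiate and to interchange sum and integral. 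I would also need to confirm the Poisson summation step itself, which follows verbatim from the argument in Lemma~\ref{kernel1} applied to the continuous, $B$-periodic function $t \mapsto (\partial K_0/\partial x)(x+t,y+t)$, whose Fourier coefficients have $\ell^1$-norm at most $\sigma_2(\psi,\phi)<\infty$.
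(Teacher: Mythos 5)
Your proposal is correct and follows the paper's own route exactly: the paper proves this lemma by applying the proof of Lemma~\ref{kernel1} to $\psi^\prime$ in place of $\psi$ (so that $\widehat{\psi^\prime} = 2\pi i\, X\widehat{\psi}$ produces the factor $2\pi$ and the function $X\widehat{\psi}$ appearing in $\sigma_2$ and $\tau_2$), integrating by parts three times instead of twice. Your explicit write-out, including the constant bookkeeping $2\pi/(2\pi)^3 = 1/(4\pi^2)$ and the justification of Poisson summation via the $\ell^1$ bound $\sigma_2 < \infty$, is precisely what the paper's two-sentence proof leaves implicit.
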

\begin{proof}[Proof of Lemma~\ref{kernel2}]
The hypotheses $\widehat{\psi}, X \widehat{\psi}, \widehat{\phi} \in W^{2,1}$ imply by Lemma~\ref{sobolevmajorant} that $\psi, \psi^\prime$ and $\phi$ are continuous and are $\lesssim 1/(1+x^2)$. Thus the series for $K_0$ and $\partial K_0/\partial x$ converge absolutely and yield continuous functions, by Lemma~\ref{K0properties}.

Since $\partial K_0/\partial x = B \sum_{k \in \Z} \psi^\prime(x-Bk) \overline{\phi(y-Bk)}$, the lemma follows by applying the proof of Lemma~\ref{kernel1} to $\psi^\prime$ instead of $\psi$, and integrating by parts three times instead of twice.
\end{proof}

\subsection*{Proof of Theorem~\ref{frameCZOfreq}} The hypotheses of the theorem ensure (by Lemma~\ref{sobolevmajorant}) that $\psi, \psi^\prime, \phi, \phi^\prime$ are all $\lesssim 1/(1+|x|^3)$. Recall that the kernel
\[
K(x,y) = \sum_{j \in \Z} |A|^j K_0(A^j x, A^j y) .
\]
has partial derivative
\[
\frac{\partial K}{\partial x} = \sum_{j \in \Z} |A|^j A^j \frac{\partial K_0}{\partial x}(A^j x, A^j y)
\]
by Lemma~\ref{Kproperties}(ii). The first Calder\'{o}n--Zygmund estimate \eqref{CZ1} follows by combining Lemmas~\ref{kernel1} and \ref{Jsum} (in the Appendix), with $z=x-y$ and with $C_1$ as defined in Theorem~\ref{frameCZOfreq}. Next,
\[
\left\vert \frac{\partial K}{\partial x} \right\vert \leq \frac{C_2}{4|x-y|^2}
\]
by combining Lemmas~\ref{kernel2} and \ref{Jsum}, with $C_2$ as defined in Theorem~\ref{frameCZOfreq}. This derivative bound implies the second Calder\'{o}n--Zygmund condition \eqref{CZ2}; for example, see \cite[pp.\,241--242]{HW96}. The third estimate \eqref{CZ3} follows by swapping the roles of $\psi$ and $\phi$. The theorem is now proved.

\section{\bf Proof of Theorem~\ref{invertible}: wavelet invertibility on Hardy space, $L^p$ and $\BMO$}
\label{mainproofs}

Recall the function $D(\zeta)$ defined for $\zeta>1$ in Section~\ref{definitions}, and the constants $C_1(\psi,\phi)$ and so on introduced in Section~\ref{freqestimates}. Define
\begin{align*}
N_1(\psi,\phi,\zeta)
& = 2\sqrt{\zeta} \lVert ST \rVert_{L^2 \to L^2} + D(\zeta) C_3(\psi,\phi) , \\
N_\infty(\psi,\phi,\zeta)
& = 2\sqrt{\zeta} \lVert ST \rVert_{L^2 \to L^2} + D(\zeta) C_2(\psi,\phi) .
\end{align*}
Discarding the terms with $D(\zeta)$ gives an inequality on the $L^2$ norm:
\begin{equation} \label{eq-N1help}
2 \lVert ST \rVert_{L^2 \to L^2} \leq \min \big\{ N_1(\psi,\phi,\zeta) , N_\infty(\psi,\phi,\zeta) \big\}.
\end{equation}
Obviously, if $N_1$ is finite for any value of $\zeta$ then it is finite for every value of $\zeta$, and similarly for $N_\infty$.
\begin{proposition}[Boundedness of the frame operator on the Hardy space and $\BMO$] \label{bounded}
Assume $\psi$ and $\phi$ satisfy the regularity and decay hypotheses in Theorem~\ref{invertible}, that is, $\widehat{\psi}, \widehat{\phi}, X\widehat{\psi}, X\widehat{\phi}  \in W^{3,1} \cap W^{3,2}$ and $\widehat{\psi}(0)=\widehat{\phi}(0)=0$, along with estimate \eqref{synthanal}. Let $\zeta \geq 3$ and assume $N_1(\psi,\phi,\zeta)<\infty$ and $N_\infty(\psi,\phi,\zeta)<\infty$.

Then the wavelet frame operator $ST$ on $L^2$ extends to a bounded linear operator on the Hardy space $H^1$ and on $\BMO$, with norm bounds:
\begin{align*}
\lVert ST \rVert_{H^1 \to H^1}
& \leq N_1(\psi,\phi,\zeta) , \\
\lVert ST \rVert_{\BMO \to \BMO}
& \leq N_\infty(\psi,\phi,\zeta) .
\end{align*}
\end{proposition}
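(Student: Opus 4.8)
The plan is to recognize $ST$ as a generalized Calder\'{o}n--Zygmund operator and then apply Proposition~\ref{CZatomicbound} directly, reading off the two norm bounds from estimates \eqref{eq-i} and \eqref{eq-iv}. So the argument splits naturally into two parts: first, verifying all five CZO assumptions for $ST$ with explicit constants; second, matching those constants against the definitions of $N_1$ and $N_\infty$. The second part is pure bookkeeping, so the real content lies in assembling the machinery of Section~\ref{waveletsfreq}.

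For the first part, I would invoke the three results of that section. Assumption~2 comes from Proposition~\ref{frameCZOkernelrep}, Assumption~3 from Proposition~\ref{frameCZOintzero}, and Assumptions~1a, 1b, 1c from Theorem~\ref{frameCZOfreq}, which additionally supplies the explicit constants $C_1(\psi,\phi)$, $C_2(\psi,\phi)$, $C_3(\psi,\phi)$. The hypotheses of Theorem~\ref{frameCZOfreq} and of Proposition~\ref{frameCZOintzero} coincide with those assumed here, namely $\widehat{\psi},\widehat{\phi},X\widehat{\psi},X\widehat{\phi}\in W^{3,1}\cap W^{3,2}$, the vanishing $\widehat{\psi}(0)=\widehat{\phi}(0)=0$, and the decay \eqref{synthanal}; so those two are immediate. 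The assumed finiteness of $N_1$ and $N_\infty$ forces $C_3$ and $C_2$ to be finite, while finiteness of $C_1$ (and of all the $\sigma_i,\tau_i$) follows from the same regularity and decay hypotheses.

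The one step needing care is verifying the hypotheses of Proposition~\ref{frameCZOkernelrep}, because that result is phrased in terms of the decay \eqref{synth}, \eqref{anal} of $\widehat{\psi},\widehat{\phi}$ themselves, whereas our hypothesis \eqref{synthanal} controls only the derivatives $\widehat{\psi}^{\,\prime},\widehat{\phi}^{\,\prime}$. I expect this to be the main (though still routine) obstacle, and I would clear it with the fundamental theorem of calculus. For $|\xi|\leq 1$, integrating $\widehat{\psi}^{\,\prime}$ outward from the origin and using $\widehat{\psi}(0)=0$ gives $|\widehat{\psi}(\xi)|\lesssim |\xi|^{1+\eps}\lesssim |\xi|^\eps$; for $|\xi|\geq 1$, integrating inward from infinity (legitimate since $\widehat{\psi}\in W^{3,2}\subset L^2$ is continuous and hence vanishes at infinity) gives $|\widehat{\psi}(\xi)|\lesssim |\xi|^{-\eps-3/2}\lesssim |\xi|^{-\eps-1/2}$, and similarly for $\widehat{\phi}$. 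Thus \eqref{synth} and \eqref{anal} hold, and since $W^{3,1}\subset W^{2,1}$, Proposition~\ref{frameCZOkernelrep} applies. Consequently $ST$ is a generalized CZO with constants $C_1(\psi,\phi)$, $C_2(\psi,\phi)$, $C_3(\psi,\phi)$.

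With this established, the conclusion drops out by applying Proposition~\ref{CZatomicbound} with $Z=ST$ and $\lVert Z\rVert_{L^2\to L^2}=\lVert ST\rVert_{L^2\to L^2}$. Estimate \eqref{eq-i} reads $\lVert ST\rVert_{H^1\to H^1}\leq 2\sqrt{\zeta}\,\lVert ST\rVert_{L^2\to L^2}+D(\zeta)C_3(\psi,\phi)$, which is precisely $N_1(\psi,\phi,\zeta)$, and estimate \eqref{eq-iv} gives $\lVert ST\rVert_{\BMO\to\BMO}\leq 2\sqrt{\zeta}\,\lVert ST\rVert_{L^2\to L^2}+D(\zeta)C_2(\psi,\phi)=N_\infty(\psi,\phi,\zeta)$. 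So both bounds hold verbatim once $ST$ has been identified as a CZO, and no further estimation is required.
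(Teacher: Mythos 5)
Your proposal is correct and follows essentially the same route as the paper: the paper's proof likewise cites Theorem~\ref{frameCZOfreq}, Proposition~\ref{frameCZOkernelrep} and Proposition~\ref{frameCZOintzero} to establish Calder\'{o}n--Zygmund Assumptions 1, 2 and 3, and then applies Proposition~\ref{CZatomicbound}. Your extra step deducing the decay \eqref{synth}--\eqref{anal} from \eqref{synthanal} via the fundamental theorem of calculus (using $\widehat{\psi}(0)=\widehat{\phi}(0)=0$ and vanishing at infinity) is a correct filling-in of a detail the paper leaves implicit.
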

\begin{proof}[Proof of Proposition~\ref{bounded}]
The frame operator $ST$ satisfies Calder\'{o}n--Zygmund Assumptions 1, 2 and 3, by Theorem~\ref{frameCZOfreq}, Proposition~\ref{frameCZOkernelrep} and Proposition~\ref{frameCZOintzero}, respectively. Now
Proposition~\ref{CZatomicbound} completes the proof.
\end{proof}

\subsection*{Defining the constants $M_1$ and $M_\infty$} Next we define the constants in the statement of Theorem~\ref{invertible}.

Assume $\psi$ and $\phi$ satisfy the regularity and decay hypotheses of Theorem~\ref{invertible} (as in Proposition~\ref{bounded}). Choose some other functions $\psi_*$ and $\phi_*$ that satisfy the same assumptions, and which in addition give perfect reconstruction on $L^2$, meaning
\begin{equation} \label{perfect}
S_{\psi_*} T_{\phi_*} = \Id : L^2 \to L^2 .
\end{equation}
(Such functions certainly exist, for example, smooth orthonormal wavelets with sufficiently many vanishing moments.)

Define the constant
\begin{equation} \label{eq-Mdef}
M_p(\psi,\phi) = \inf_{\zeta \geq 3} N_p(\psi - \psi_*,\phi,\zeta) + \inf_{\eta \geq 3} N_p(\psi_*,\phi - \phi_*,\eta) ,
\end{equation}
when $p=1,\infty$. (This notation suppresses the dependence of $M_p$ on our choices of $\psi_*$ and $\phi_*$, but no harm will come from that omission.)

Roughly speaking, if $M_p$ is ``small'' then the perturbations $\psi - \psi_*$ and $\phi - \phi_*$ are ``small''. Note that Theorem~\protect\ref{invertible} assumes $M_1$ and $M_\infty$ are both rather small, in fact, less than $1$ in magnitude. That means the perturbations are ``small'', so that $\psi$ and $\phi$ are ``close'' to giving perfect reconstruction.

\subsection*{Proof of the main result --- Theorem~\protect\ref{invertible}}
In view of the linearity of $S$ with respect to the synthesizer $\psi$, and linearity of $T$ with respect to $\phi$, the perfect reconstruction in \eqref{perfect} implies a decomposition
\begin{equation} \label{eq-decomp}
ST - \Id = S_{\psi - \psi_*} T_\phi + S_{\psi_*} T_{\phi - \phi_*}
\end{equation}
valid on $L^2$. Thus by Proposition~\ref{bounded}, $ST - \Id$ extends to a bounded operator on $H^1$ with norm bound
\[
\lVert ST - \Id \rVert_{H^1 \to H^1} \leq N_1(\psi - \psi_*,\phi,\zeta) + N_1(\psi_*,\phi - \phi_*,\eta)
\]
for each $\zeta,\eta \geq 3$. Taking the infimum of the right side with respect to $\zeta$ and $\eta$ gives $M_1(\psi,\phi)$, and so we have
\begin{align*}
\lVert ST - \Id \rVert_{H^1 \to H^1} \leq M_1(\psi,\phi) & < 1 , \\
\lVert ST - \Id \rVert_{L^2 \to L^2} \leq \frac{1}{2} M_1(\psi,\phi) & < \frac{1}{2} ,
\end{align*}
by hypothesis and \eqref{eq-N1help}.

We conclude the frame operator $ST$ is bijective on $H^1$ and on $L^2$, with inverse given by the norm convergent Neumann series
\[
(ST)^{-1} = (\Id - (\Id-ST))^{-1} = \sum_{k=0}^\infty (\Id-ST)^k .
\]
Notice this inverse operator is well defined on $H^1 + L^2$, because if $f \in H^1 \cap L^2$ then the inverse series $\sum_{k=0}^\infty (\Id-ST)^k f$ converges to the same limit function in $H^1$ as in $L^2$, by considering pointwise convergence of a subsequence of partial sums.

Since $ST$ and $(ST)^{-1}$ are bounded on $H^1$ and on $L^2$, they extend to bounded operators on $L^p,1<p<2$, by interpolation (either the complex method \cite[Corollary 1]{FS72}, or the real method \cite[Theorem III.6.1]{GR85}). These extended operators remain inverses of each other on $L^p$, by density of $L^p \cap L^2$. Thus $ST$ is bijective on $L^p$.

Argue similarly for $2<p<\infty$, using $N_\infty$ and $M_\infty$ and then interpolating between $L^2$ and $\BMO$ \cite[Corollary 2]{FS72}.

\smallskip
\noindent \emph{Note.} An alternative to decomposition \eqref{eq-decomp} is
\[
ST - \Id = S_\psi T_{\phi-\phi_*} + S_{\psi-\psi_*} T_{\phi_*} .
\]
If this formula is used then one obtains an alternative version of Theorem~\ref{invertible}, as one verifies by adapting the proof in the obvious manner.

\subsection*{Comments on the Neumann series in $L^p$}  The quantities $M_1$ and $M_\infty$ determine the rate of convergence of the Neumann series for $(ST)^{-1}$, in the proof above, for $H^1$ and $\BMO$. To get convergence rates on $L^p$ one may interpolate $\Id -ST$ as follows. (This next material is not needed elsewhere in the paper.) Define
\[
N_p(\psi,\phi,\zeta)
=
c(p) \big( \sqrt{32 \zeta} \, \lVert ST \rVert_{L^2 \to L^2} + \frac{8\zeta}{\zeta^2-1} C_3(\psi,\phi) \big)^{(2/p) - 1} \lVert ST \rVert_{L^2 \to L^2}^{2 - (2/p)}
\]
when $1 < p \leq 2$
%%
%\begin{align*}
%N_p(\psi,\phi,\zeta)
%& =
%c(p) \big( \sqrt{32 \zeta} \, \lVert ST \rVert_{L^2 \to L^2} + \frac{8\zeta}{\zeta^2-1} C_3(\psi,\phi) \big)^{(2/p) - 1} \lVert ST \rVert_{L^2 \to L^2}^{2 - (2/p)}  & \text{when $1 < p \leq 2$,} \\
%N_p(\psi,\phi,\zeta)
%& = c(p^\prime) \big( \sqrt{32 \zeta} \, \lVert ST \rVert_{L^2 \to L^2} + \frac{8\zeta}{\zeta^2-1} C_2(\psi,\phi) \big)^{1-(2/p)} \lVert ST \rVert_{L^2 \to L^2}^{2/p} & \text{when $2 \leq p < \infty$,}
%\end{align*}
%%
and let $M_p(\psi,\phi)$ be as in \eqref{eq-Mdef}. Here $c(p)$ is the constant from Section~\ref{boundedCZO}. Then
\[
\lVert ST \rVert_{L^p \to L^p} \leq N_p(\psi,\phi,\zeta)
\]
by Proposition~\ref{CZatomicbound}, and so the decomposition \eqref{eq-decomp} implies $\lVert ST - \Id \rVert_{L^p \to L^p} \leq M_p(\psi,\phi)$. Hence if $M_p < 1$ then the Neumann series for $(ST)^{-1}$ converges with geometric rate $M_p(\psi,\phi)$.

This approach  breaks down for $p$ near $1$ because the interpolation constant $c(p)$ blows up there, and so one gets $M_p>1$. To obtain a convergence rate on the Neumann series for such $p$-values, one should instead interpolate \emph{powers} of $\Id -ST$, as explained in \cite{BL12b}.

\subsection*{Proof of Corollary~\protect\ref{cor-expansions}}
Formally, one merely notes that bijectivity of $ST$ on each function space in question (as given by Theorem~\ref{invertible}) implies surjectivity of $S$, which means that each function $f$ can be written as $\sum_{j,k} c_{j,k} \psi_{j,k}$ for some coefficients $c_{j,k}$.

To make this deduction rigorous, one needs $S$ and $T$ to be extended separately, whereas  Theorem~\ref{invertible} says only that the composition $ST$ extends from $L^2$ to be bounded on $H^1,\BMO$ and $L^p$. Fortunately, the extension of analysis and synthesis separately to those function spaces (and indeed to a whole scale of Triebel--Lizorkin spaces) has been proved by Frazier and Jawerth \cite[Theorems 3.5, 3.7, and p.{\,}81]{FJ90}. One can verify their hypotheses straightforwardly, given the assumptions in Theorem~\ref{invertible}. In particular they proved that $T : L^p \to \dot{f}^{0,2}_p$ and $S : \dot{f}^{0,2}_p \to L^p$ for a certain homogeneous Triebel--Lizorkin sequence space $\dot{f}^{0,2}_p$, and similarly for $H^1$ when $p=1$ and $\BMO$ when $p=\infty$.

Thus for all $f$ we have
\[
f = S \big( T (ST)^{-1} f \big) ,
\]
and so $f = \sum_{j,k} c_{j,k} \psi_{j,k}$ with unconditional convergence, where the coefficient sequence $\{ c_{j,k} \}=T \big( (ST)^{-1} f \big)$ belongs to the space $\dot{f}^{0,2}_p$.

\section{\bf Example: Mexican hat wavelet expansions in Hardy space, $L^p$ and $\BMO$}
\label{mexicanexample}

The Mexican hat synthesizer $\psi(x)=(1-x^2) e^{-x^2/2}$ is shown in Figure~\ref{mexican} along with its Fourier transform
\[
\widehat{\psi}(\xi) = (2\pi \xi)^2 \exp(-2\pi^2 \xi^2) .
\]
We assume dyadic dilations and unit translations, so that
\[
A=2, \qquad B = 1 .
\]

We wish to apply our results to this example, in particular to get expansions of arbitrary $f \in L^p$ in terms of the Mexican hat system $\{ \psi_{j,k} \}$. In order to apply our results, though, we must construct a suitable analyzer $\phi$.

\smallskip

1. First we will cut off the Mexican hat in the frequency domain, to obtain a band-limited synthesizer $\psi_*$. Towards that end, define a ``ramp'' function
\[
\rho(\xi) =
\begin{cases}
0 & \text{when $\xi \leq 0$,} \\
35 \xi^4 - 84 \xi^5 + 70 \xi^6 - 20 \xi^7 & \text{when $0 \leq \xi \leq 1$,} \\
1 & \text{when $\xi \geq 1$,}
\end{cases}
\]
so that $\rho \in C^3(\R)$ and $\rho^\prime(\xi)=140 \xi^3 (1-\xi)^3>0$ for $\xi \in (0,1)$. This formula for $\rho^\prime$ implies that $\rho(\xi)$ increases from $0$ to $1$ as $\xi$ increases from $0$ to $1$, and that
\begin{equation} \label{unity}
\rho(\xi) + \rho(1-\xi) = 1 \qquad \text{for all $\xi \in \R$.}
\end{equation}
Next define a cut-off function (see Figure~\ref{kappabeta}) by
\[
\kappa(\xi) =
\begin{cases}
\rho(6\xi-2) & \text{when $\xi \geq 0$,} \\
\kappa(-\xi) & \text{when $\xi \leq 0$,}
\end{cases}
\]
so that $\kappa \in C^3(\R)$ increases from $0$ to $1$ as $\xi$ increases from $1/3$ to $1/2$ and similarly as $\xi$ decreases from $-1/3$ to $-1/2$. Further define
\[
\widehat{\psi_*} = (1-\kappa) \widehat{\psi} .
\]
Obviously $\psi_*$ is band-limited, with $\widehat{\psi_*}$ supported in the interval $[-1/2,1/2]$. Notice $\widehat{\psi_*} = \widehat{\psi}$ on the interval $[-1/3,1/3]$, because $\kappa=0$ there.

\begin{figure}
  \begin{minipage}{.45\linewidth} \hspace{.2in}
  \includegraphics[scale=0.4]{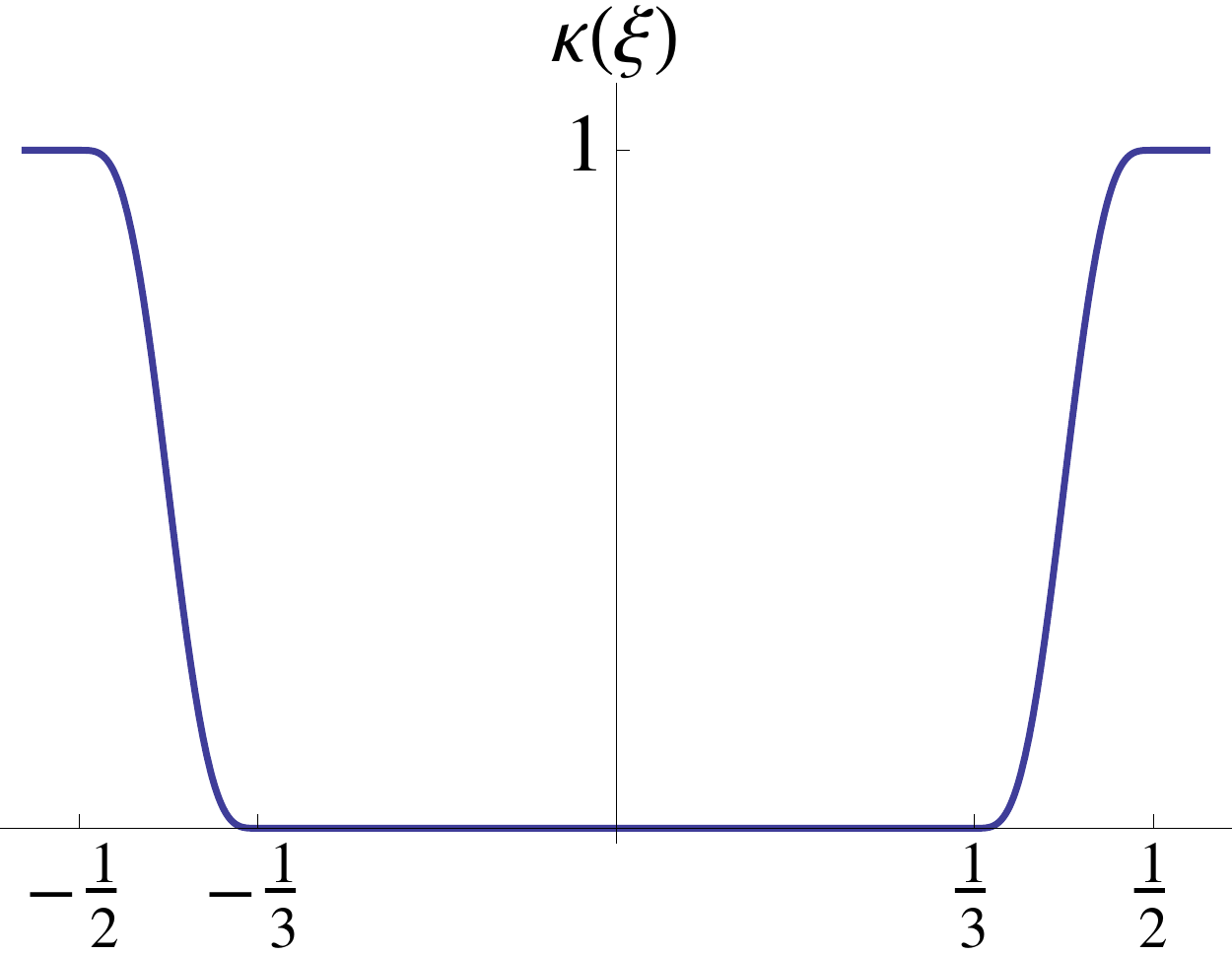}
  \end{minipage}
  \begin{minipage}{.45\linewidth} \hspace{.3in}
  \includegraphics[scale=0.4]{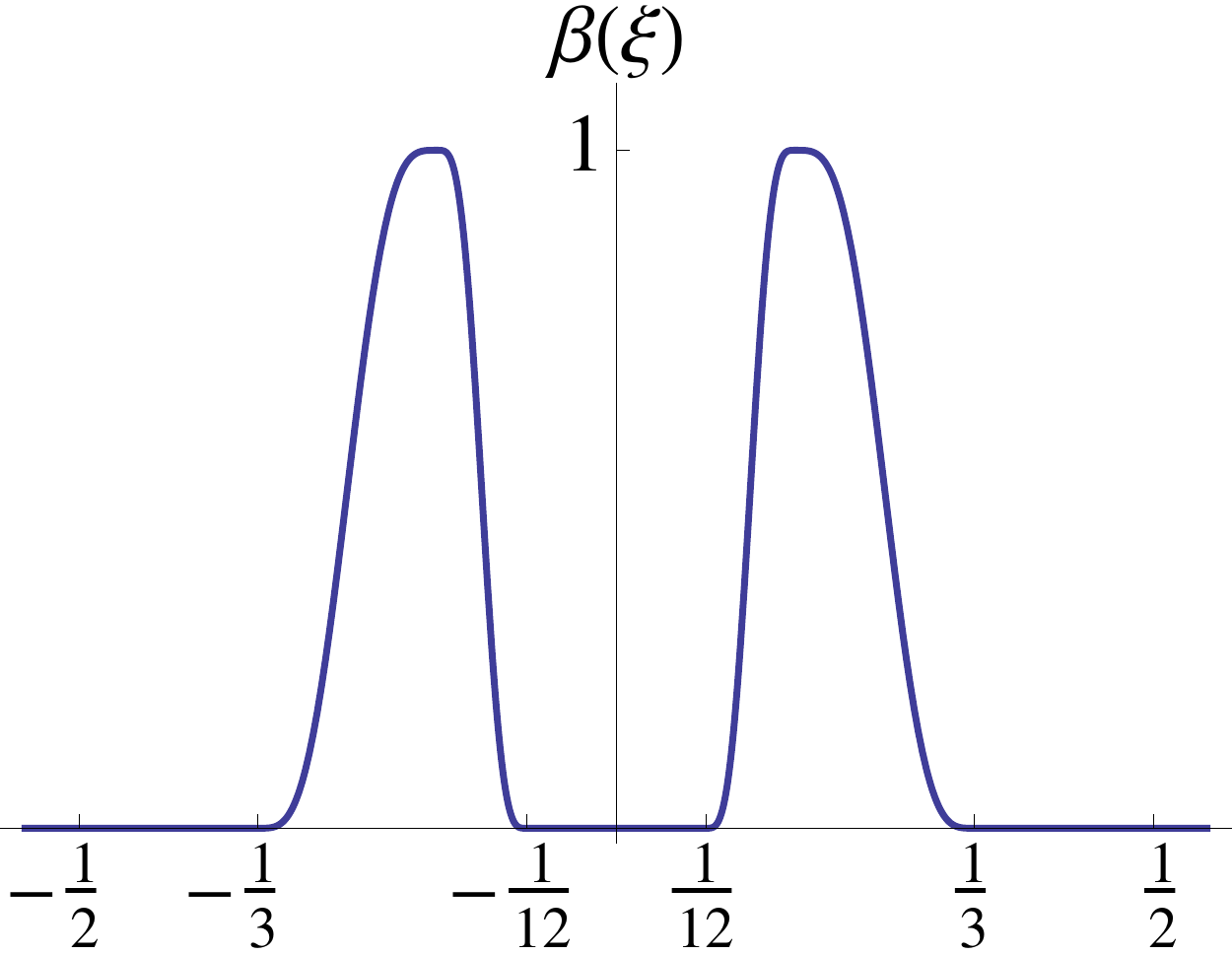}
  \end{minipage}
    \caption{\label{kappabeta}
    The cut-off function $\kappa$ and the double bump $\beta$, in the frequency domain.}
\end{figure}

\smallskip
2. Next we construct a $\phi$ such that analysis with $\phi$ followed by synthesis with $\psi_*$ gives perfect reconstruction on $L^2$. The method depends on a dyadic partition of unity, constructed as follows.

Start by defining a $C^3$-smooth ``double bump'' function supported in the interval $[-1/3,1/3]$:
\[
\beta(\xi) =
\begin{cases}
\rho(12\xi-1) & \text{when $0 \leq \xi \leq \frac{1}{6}$,} \\
\rho(2-6\xi) & \text{when $\xi \geq \frac{1}{6}$,} \\
\beta(-\xi), & \text{when $\xi \leq 0$,}
\end{cases}
\]
as plotted in Figure~\ref{kappabeta}. We claim that $\beta$ generates a dyadic partition of unity, with
\begin{equation} \label{pou}
\sum_{j \in \Z} \beta(2^j \xi) = 1 \qquad \text{for all $\xi \neq 0$.}
\end{equation}
Indeed, when $\frac{1}{12} \leq \xi < \frac{1}{6}$ we have $\beta(2^j \xi)=0$ for all $j \neq 0,1$ and the terms with $j=0,1$ give
\[
\beta(\xi) + \beta(2\xi) = \rho(12\xi-1) + \rho(2-12\xi) = 1
\]
by definition of $\beta$ and \eqref{unity}. (\emph{Note.} The double bump function $\beta$ is a particular case of the ``bell function'' construction used for Meyer--Lemari\'{e} wavelets \cite{HW96}.)

We define the analyzer $\phi$ by choosing its Fourier transform to be
\[
\widehat{\phi} = \beta/\widehat{\psi}
\]
as shown in Figure~\ref{muphi}, which is a kind of ``band-limited reciprocal'' of $\widehat{\psi}$. Hence $\widehat{\phi} \, \overline{\widehat{\psi_*}} = \beta$ on $\R$, because $\widehat{\psi_*}=\widehat{\psi}$ on the interval $[-1/3,1/3]$, which contains the support of $\beta$. Hence by the partition of unity property \eqref{pou}, we see $\psi_*$ and $\phi$ satisfy the ``discrete Calder\'{o}n condition''
\[
\sum_{j \in \Z} \widehat{\phi}(2^j \xi) \overline{\widehat{\psi_*}(2^j \xi)} = 1 ,  \qquad \xi \neq 0 .
\]
Since also $\widehat{\psi_*}$ and $\widehat{\phi}$ are supported in the interval $[-1/2,1/2]$ of length $1$, the discrete Calder\'{o}n condition implies that they give perfect reconstruction, meaning $S_{\psi_*} T_\phi = \Id$ on $L^2$.  (This fact, that band limitation together with discrete Calder\'{o}n implies perfect reconstruction, can be found in \cite[Chapter 6]{FJW91}, or else use \cite[formula (14)]{bl5}, for example.)

Choose $\phi_* = \phi$, so that $\psi_*$ and $\phi_*$ give perfect reconstruction on $L^2$. Then let
\[
\mu = \psi- \psi_* .
\]
Note that $\widehat{\mu} = \kappa \widehat{\psi}$. We plot $\widehat{\mu}(\xi)$ in Figure~\ref{muphi}.
\begin{figure}
  \begin{minipage}{.45\linewidth} \hspace{.2in}
  \includegraphics[scale=0.4]{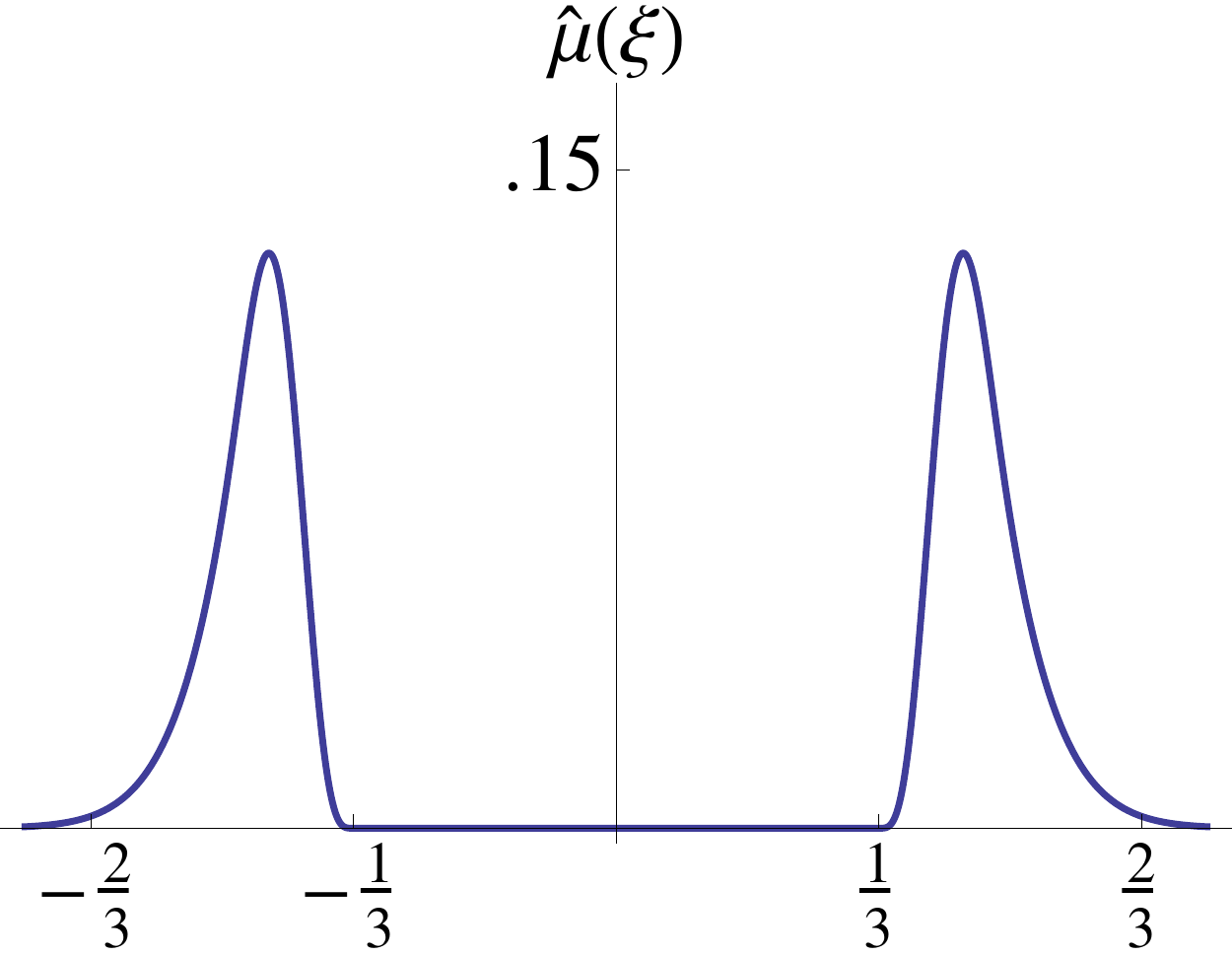}
  \end{minipage}
  \begin{minipage}{.45\linewidth} \hspace{.3in}
  \includegraphics[scale=0.4]{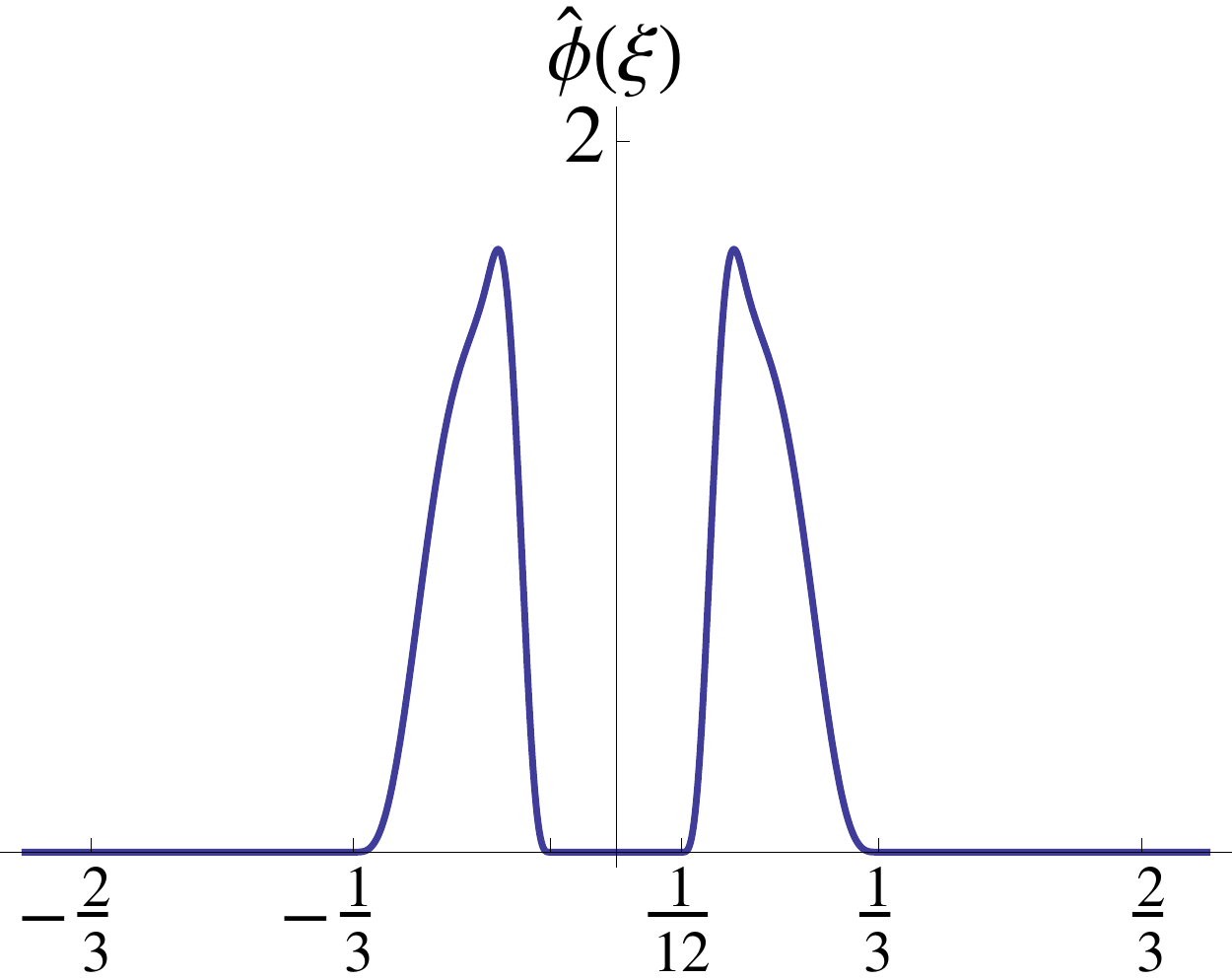}
  \end{minipage}
    \caption{\label{muphi}
    The Fourier transforms of the cut-off synthesizer, and of the analyzer.}
\end{figure}

\smallskip
3. Next, for the synthesizer $\mu$ and analyzer $\phi$ we estimate the quantities
\begin{align*}
\sigma_1(\mu,\phi) & = \sum_{l \neq 0} \lVert \beta(\cdot) \widehat{\psi}(\cdot+l) / \widehat{\psi}(\cdot) \rVert_{L^1[-1/3,1/3]} < 0.000045 \\
\sigma_2(\mu,\phi) & = 2\pi \sum_{l \neq 0} \lVert \beta(\cdot) X(\cdot+l) \widehat{\psi}(\cdot+l) / \widehat{\psi}(\cdot) \rVert_{L^1[-1/3,1/3]} < 0.00022 \\
\sigma_3(\mu,\phi) & = 2\pi \sum_{l \neq 0} \lVert \beta(\cdot) X(\cdot) \widehat{\psi}(\cdot + l) / \widehat{\psi}(\cdot) \rVert_{L^1[-1/3,1/3]} < 0.000067
\end{align*}
\begin{align*}
\tau_1(\mu,\phi) & = \frac{1}{4\pi^2} \sum_{l \neq 0} \big\lVert \big[ \beta(\cdot) \widehat{\psi}(\cdot+l) / \widehat{\psi} (\cdot) \big]^{\prime \prime} \big\rVert_{L^1[-1/3,1/3]} < 0.00086 \\
\tau_2(\mu,\phi) & = \frac{1}{4\pi^2} \sum_{l \neq 0} \big\lVert \big[ \beta(\cdot) X(\cdot+l) \widehat{\psi}(\cdot+l) / \widehat{\psi} (\cdot)\big]^{\prime \prime \prime} \big\rVert_{L^1[-1/3,1/3]} < 0.036 \\
\tau_3(\mu,\phi) & = \frac{1}{4\pi^2} \sum_{l \neq 0} \big\lVert \big[ \beta(\cdot) X(\cdot) \widehat{\psi} (\cdot + l) / \widehat{\psi}(\cdot) \big]^{\prime \prime \prime} \big\rVert_{L^1[-1/3,1/3]} < 0.014
\end{align*}
as we now justify. The first quantity $\sigma_1$ was defined in Section~\ref{freqestimates}, giving
\[
\sigma_1(\mu,\phi) = \sum_{l \in \Z} \lVert \widehat{\mu}(\cdot) \overline{\widehat{\phi}(\cdot+l)} \rVert_{L^1} = \sum_{l \in \Z} \lVert \widehat{\mu}(\cdot+l) \overline{\widehat{\phi}(\cdot)} \rVert_{L^1} .
\]
We may exclude $l=0$ from the sum because the supports of $\widehat{\mu}$ and $\widehat{\phi}$ do not overlap (see Figure~\ref{muphi}). Further, when $l \neq 0$ we may replace $\widehat{\mu}$ (which equals $\kappa \widehat{\psi}$) with $\widehat{\psi}$, because these functions agree outside the interval $[-1/2,1/2]$. Then we may substitute $\widehat{\phi}=\beta/\widehat{\psi}$. These steps lead to the formula given above for $\sigma_1(\mu,\phi)$, which we have estimated numerically using Mathematica to obtain the upper bound $0.000045$. We proceed similarly for each of the other quantities $\sigma_2,\sigma_3,\tau_1,\tau_2,\tau_3$.

From this numerical work we can estimate the Calder\'{o}n--Zygmund constants in Theorem~\ref{frameCZOfreq}, taking $A=2$ there:
\begin{align*}
C_1(\mu,\phi) & = 4 \sqrt{\sigma_1(\mu,\phi) \tau_1(\mu,\phi)} <  0.00079 , \\
C_2(\mu,\phi) & = \frac{40}{3} \sqrt[3]{\sigma_2(\mu,\phi) \tau_2(\mu,\phi)^2} <  0.088 , \\
C_3(\mu,\phi) & = \frac{40}{3} \sqrt[3]{\sigma_3(\mu,\phi) \tau_3(\mu,\phi)^2} < 0.032 .
\end{align*}

\smallskip 4. We have $S_\mu T_\phi = ST - \Id$ on $L^2$, since $\mu=\psi-\psi_*$ and $S_{\psi_*} T_\phi = \Id$ on $L^2$. Hence
\[
\lVert S_\mu T_\phi \rVert_{L^2 \to L^2}
= \lVert ST - \Id \rVert_{L^2 \to L^2} \leq \Delta%(\widehat{\psi},\widehat{\phi})
\]
where $\Delta$ denotes a Daubechies--type sufficient frame estimate; see \cite[Theorem 1]{bl5} with $p=2$. The quantity $\Delta$ can be evaluated numerically, giving $\lVert S_\mu T_\phi \rVert_{L^2 \to L^2} < 0.00026$.

Using this fact and our estimates from Step 3, we calculate from the definitions of $N_1$ and $N_\infty$ that
\begin{align*}
M_1(\psi,\phi) \leq N_1(\mu,\phi,90) & < 0.0075 , \\
M_\infty(\psi,\phi) \leq N_\infty(\mu,\phi,180) & < 0.011 ,
\end{align*}
where we chose the values of $\zeta=90$ and $\zeta=180$ to approximately minimize $N_1$ and $N_\infty$. (Recall also here that $\phi - \phi_* \equiv 0$, which eliminates the second term in the definitions of  $M_1$ and $M_\infty$.) Thus $ST$ provides almost-perfect reconstruction on $H^1$ and $\BMO$, with error of at most $1.1$\%.

Hence the operator $ST$ is bijective on $H^1$ and $\BMO$, and on $L^p$ for each $p \in (1,\infty)$, by Theorem~\ref{invertible}. We conclude from Corollary~\ref{cor-expansions} that every function in $L^p$ can be expressed as a norm convergent series in terms of the Mexican hat wavelet system $\{ \psi_{j,k} \}$. Thus our work provides a strong positive answer to Meyer's question in the Introduction.

\subsection*{Final comment on $L^p$}
Let us state some direct estimates for the frame operator on $L^p$, in the Mexican hat example, even though we do not need them for our work:
\[
M_p(\psi,\phi) \leq N_p(\mu,\phi,50) < 1 \qquad \text{when $1.04 \leq p \leq 2$.}
%M_p(\psi,\phi) \leq N_p(\mu,\phi,100) & < 1 \qquad \text{when $2 \leq p \leq 22.3$.}
\]
When $1<p<1.04$ we cannot argue directly that the Neumann series for $(ST)^{-1}$ converges, because we cannot show $M_p<1$ in that range. That explains why in the example above we use Theorem~\ref{invertible}, whose proof interpolates the invertibility property between $H^1$ and $L^2$, rather than trying to prove directly that the Neumann series converges.

\section*{Acknowledgments} Laugesen thanks the Department of Mathematics and Statistics at the University of Canterbury, New Zealand, for hosting him during much of this research. This work was partially supported by a grant from the Simons Foundation (\#204296 to Richard Laugesen).

\appendix

\section{\bf Geometric sums}

In order to extend our decay estimates from $K_0$ to the full kernel $K$, earlier in the paper, we summed over dilation scales $j \in \Z$. Here we collect elementary estimates used in that task.
\begin{lemma} \label{Jsum}
Assume $g : \R \to \R$ and let $\sigma, \tau > 0$.

(i) If $|g(z)| \leq \min \{ \sigma, \tau/|z|^2 \}$ for all $z \neq 0$, then
\[
\sum_{j \in \Z} |A^j g(A^j z)| \leq \frac{2|A|}{|A|-1} \frac{\sqrt{\sigma \tau}}{|z|} , \qquad z \neq 0 .
\]

(ii) If $|g(z)| \leq \min \{ \sigma, \tau/|z|^3 \}$ for all $z \neq 0$, then
\[
\sum_{j \in \Z} |A^{2j} g(A^j z)| \leq \frac{|A|(2|A|+1)}{|A|^2-1} \frac{\sqrt[3]{\sigma \tau^2}}{|z|^2} , \qquad z \neq 0 .
\]
\end{lemma}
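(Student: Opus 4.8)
The plan is to reduce each sum to a minimum of two competing geometric sequences in the index $j$ and then split at their crossover. For part~(i), substituting the hypothesis $|g(w)| \le \min\{\sigma, \tau/|w|^2\}$ at $w = A^j z$ gives
\[
|A^j g(A^j z)| \le \min\bigl\{ \sigma |A|^j , \, (\tau/|z|^2) |A|^{-j} \bigr\} ,
\]
so the $j$-th summand is the smaller of an increasing term $a_j = \sigma|A|^j$ and a decreasing term $b_j = (\tau/|z|^2)|A|^{-j}$. I would let $j_0 \in \R$ be the (generally non-integer) crossover value where $a_{j_0} = b_{j_0}$, namely $|A|^{j_0} = \sqrt{\tau/\sigma}/|z|$, and set $m = \lfloor j_0 \rfloor$. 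Then $a_j \le b_j$ for $j \le m$ while $a_j > b_j$ for $j \ge m+1$, so the summand equals $a_j$ on the lower range and $b_j$ on the upper range.

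Next I would evaluate the two one-sided geometric series, both of which converge because $|A| > 1$: one finds $\sum_{j \le m} a_j = \sigma |A|^m \frac{|A|}{|A|-1}$ and $\sum_{j \ge m+1} b_j = (\tau/|z|^2)|A|^{-m}\frac{1}{|A|-1}$. Using $m \le j_0$ to bound $|A|^m \le \sqrt{\tau/\sigma}/|z|$ in the first sum, and $m > j_0 - 1$ to bound $|A|^{-m} < |A|\sqrt{\sigma/\tau}\,|z|$ in the second, each half collapses to a multiple of $\sqrt{\sigma\tau}/|z|$; adding them produces the constant $\frac{2|A|}{|A|-1}$, as claimed.

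Part~(ii) is identical in structure, the only change being the exponents: the increasing term becomes $a_j = \sigma |A|^{2j}$ (geometric ratio $|A|^2$) while the decreasing term stays $b_j = (\tau/|z|^3)|A|^{-j}$ (ratio $|A|$). The crossover is $|A|^{j_0} = (\tau/\sigma)^{1/3}/|z|$, and the same split-and-sum yields the two constants $\frac{|A|^2}{|A|^2-1}$ and $\frac{|A|}{|A|-1}$, whose sum simplifies via $\frac{|A|}{|A|-1} = \frac{|A|^2+|A|}{|A|^2-1}$ to the stated $\frac{|A|(2|A|+1)}{|A|^2-1}$.

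There is no serious obstacle here, since the lemma is elementary; the only real work is the bookkeeping. One must keep the two geometric ratios ($|A|$ versus $|A|^2$) straight, and verify that the rounding estimates $j_0 - 1 < m \le j_0$ are exactly what is needed to convert $|A|^{\pm m}$ into the balanced quantity $\sqrt{\sigma\tau}/|z|$ (respectively $\sqrt[3]{\sigma\tau^2}/|z|^2$) without wasting a constant. Matching the final additive constants to the statement exactly is the one place to proceed with care.
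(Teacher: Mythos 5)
Your proposal is correct and follows essentially the same argument as the paper: split the bilateral sum at the crossover scale where the two bounds $\sigma$ and $\tau/|z|^2$ (resp.\ $\tau/|z|^3$) balance, sum the two geometric series, and use the integer-rounding inequalities to recover $\sqrt{\sigma\tau}/|z|$ (resp.\ $\sqrt[3]{\sigma\tau^2}/|z|^2$). The paper phrases the split point as ``the smallest integer $J$ with $|A|^J|z| > \sqrt{\tau/\sigma}$,'' which coincides with your $m+1 = \lfloor j_0\rfloor + 1$, so the two computations are identical, constants included.
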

\begin{proof}[Proof of Lemma~\ref{Jsum}] \

(i) Fix $z \neq 0$ and let $J$ be any integer. We split the sum into two pieces and estimate each piece using the assumption on $g$:
\begin{align*}
\sum_{j \in \Z} |A^j g(A^j z)|
& \leq \sum_{j=-\infty}^{J-1} |A^j| \sigma + \sum_{j=J}^\infty |A^j| \frac{\tau}{|A^j z|^2} \\
& = \frac{|A|}{|A|-1} \big( |A|^{J-1} \sigma + |A|^{-J} |z|^{-2} \tau \big)
\end{align*}
by the geometric series. Choose $J$ to be the smallest integer satisfying $|A|^J |z| > \sqrt{\tau/\sigma}$, so that $|A|^{J-1} |z| \leq \sqrt{\tau/\sigma}$. (To motivate this choice, notice that $\sigma \leq \tau/|z|^2$ if and only if $|z| \leq \sqrt{\tau/\sigma}$.) Then
\[
\sum_{j \in \Z} |A^j g(A^j z)| \leq \frac{|A|}{|A|-1} \big( \sqrt{\tau/\sigma} |z|^{-1} \sigma + \sqrt{\sigma/\tau} |z|^{-1} \tau \big) ,
\]
from which part (i) of the lemma follows.

(ii) Adapt part (i), except choose $J$ to be the smallest integer satisfying $|A|^J |z| > \sqrt[3]{\tau/\sigma}$.
\end{proof}

\section{\bf Weak--strong interpolation between $L^1$ and $L^2$}

The Marcinkiewicz interpolation theorem implies in particular that if a sublinear operator satisfies weak-$L^r$ and weak-$L^2$ bounds, then strong-$L^p$ bounds hold for $p$ between $r$ and $2$. By strengthening the second hypothesis to strong-$L^2$ and calling also on Riesz--Thorin interpolation (which requires the operator to be linear), we will obtain an explicit estimate on the $L^p$ norm such that equality holds when $p=2$.

The bound will involve the following constants, for $1 \leq r < p \leq 2$:
\begin{align*}
c(p,r) & = \Big( 2^{2pr/(p+r)} \frac{p(p+r)(2-r)}{(p+r-pr)(p-r)} \Big)^{\! (2-p)(p+r)/2p(p+r-pr)} , \\
c(p) = c(p,1) & = \Big( 2^{2p/(p+1)} \frac{p(p+1)}{p-1} \Big)^{\! (2-p)(p+1)/2p} .
\end{align*}
Notice $p=2$ gives $c(2)=1$, and indeed $c(2,r)=1$ for each $r$. At the other extreme of $p$-values, we see $c(p,r)$ blows up like $(p-r)^{-1/r}$ as $p \downarrow r$, and $c(p)$ blows up like $(p-1)^{-1}$ as $p \downarrow 1$. See Figure~\ref{cpfig}.
\begin{figure}
  \includegraphics[scale=0.5]{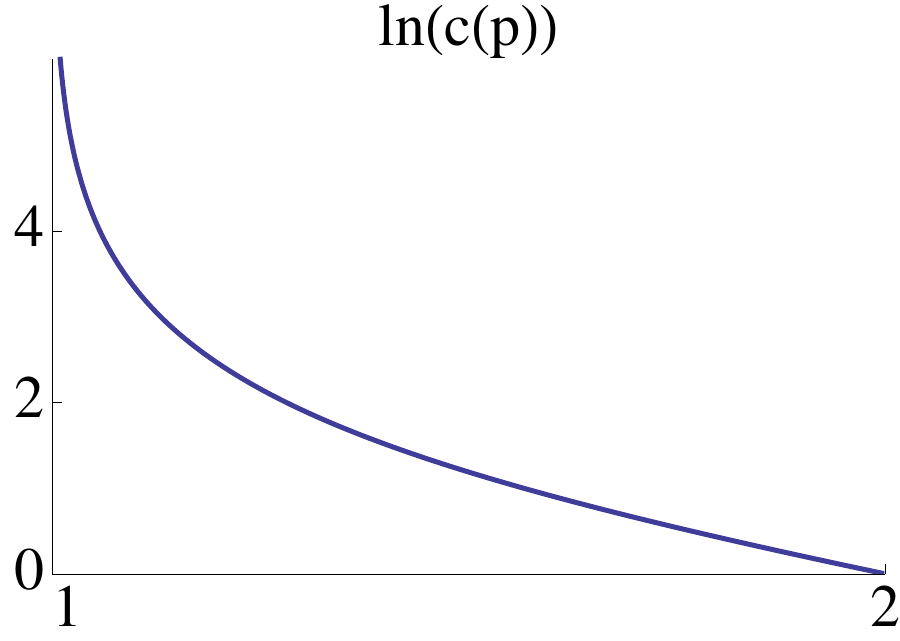}
    \caption{\label{cpfig}
    The logarithm of the $L^p$ constant in Proposition \ref{interpr}. Note $c(2)=1$.}
\end{figure}
\begin{proposition} \label{interpr}
Let $(X,\mu)$ and $(Y,\nu)$ be measure spaces, and $1 \leq r < 2$. Assume $Z$ is a linear operator from $L^r(X)+L^2(X)$ to the space of measurable complex-valued functions on $Y$.

If $Z$ is weak type $(r,r)$ and strong type $(2,2)$, then $Z$ is strong type $(p,p)$ for $r < p \leq 2$, with
\[
\lVert Z \rVert_{L^p(X) \to L^p(Y)} \leq c(p,r) \, \lVert Z \rVert_{L^r(X) \to \text{weak-}L^r(Y)}^{\left[(2/p) - 1\right]/\left[(2/r)-1 \right]} \, \lVert Z \rVert_{L^2(X) \to L^2(Y)}^{\left[ (2/r) - (2/p) \right]/\left[ (2/r)-1 \right]} .
\]
(Equality holds for $p=2$.) In particular, when $r=1$ the conclusion says for $1 < p \leq 2$ that
\[
\lVert Z \rVert_{L^p(X) \to L^p(Y)} \leq c(p) \, \lVert Z \rVert_{L^1(X) \to \text{weak-}L^1(Y)}^{(2/p) - 1} \, \lVert Z \rVert_{L^2(X) \to L^2(Y)}^{2 - (2/p)} .
\]
\end{proposition}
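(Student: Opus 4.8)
The plan is to interpolate in two stages, combining a real-variable (Marcinkiewicz) argument with complex (Riesz--Thorin) interpolation. Marcinkiewicz alone, applied between the weak-$(r,r)$ and weak-$(2,2)$ endpoints, would already give strong-$(p,p)$ boundedness, but its constant carries a factor $1/(2-p)$ that blows up as $p\to 2$ and never reduces to the exact $L^2$ bound at the endpoint. To repair this I would run Marcinkiewicz only up to an intermediate exponent $s$ with $r<s<p$, where its constant stays finite, and then use Riesz--Thorin --- legitimate because $Z$ is linear --- to interpolate between the resulting strong-$(s,s)$ estimate and the \emph{strong}-$(2,2)$ hypothesis. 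Since the Riesz--Thorin parameter tends to $0$ as $p\to 2$, this second stage contributes a clean product constant and forces equality at $p=2$. I would take $s$ to be the harmonic mean of $r$ and $p$, defined by $1/s=\tfrac12(1/r+1/p)$, i.e.\ $s=2pr/(p+r)$; this is the choice that produces the factor $2^{2pr/(p+r)}$ appearing in $c(p,r)$.

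For the first stage, at each height $\alpha>0$ I would split $f=f^{\mathrm{hi}}+f^{\mathrm{lo}}$ with $f^{\mathrm{hi}}=f\,\charfn_{\{|f|>\gamma\alpha\}}$ and $f^{\mathrm{lo}}=f\,\charfn_{\{|f|\le\gamma\alpha\}}$, where the threshold constant $\gamma$ is chosen to balance the two pieces below. Subadditivity of the distribution function gives $\nu(\{|Zf|>\alpha\})\le \nu(\{|Zf^{\mathrm{hi}}|>\alpha/2\})+\nu(\{|Zf^{\mathrm{lo}}|>\alpha/2\})$; I would bound the first term using the weak-$(r,r)$ hypothesis and the second using Chebyshev together with the strong-$(2,2)$ hypothesis (read as a weak-$(2,2)$ bound with the same constant). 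Substituting into the layer-cake identity $\lVert Zf\rVert_{L^s}^s=s\int_0^\infty \alpha^{s-1}\,\nu(\{|Zf|>\alpha\})\,d\alpha$ and interchanging the order of integration by Tonelli, the inner integrals in $\alpha$ converge precisely because $r<s<2$, yielding the classical Marcinkiewicz bound
\[
\lVert Z \rVert_{L^s \to L^s} \le 2\Big[ s\Big(\tfrac{1}{s-r}+\tfrac{1}{2-s}\Big)\Big]^{1/s}\,\lVert Z \rVert_{L^r \to \text{weak-}L^r}^{\,1-\lambda}\,\lVert Z \rVert_{L^2 \to L^2}^{\,\lambda},
\]
where $\lambda=2(s-r)/\big(s(2-r)\big)$ is fixed by $1/s=(1-\lambda)/r+\lambda/2$. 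The factor $2$ comes from the $\alpha/2$ splitting, and choosing $\gamma$ to equalize the two contributions is exactly what collapses the constant to the single additive form shown, rather than a two-term expression.

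For the second stage, linearity of $Z$ lets me apply Riesz--Thorin between $L^s$ and $L^2$ to reach $L^p$: with $1/p=\theta/s+(1-\theta)/2$, so that $\theta=r(2-p)/(p+r-pr)$, I get $\lVert Z\rVert_{L^p\to L^p}\le \lVert Z\rVert_{L^s\to L^s}^{\theta}\,\lVert Z\rVert_{L^2\to L^2}^{1-\theta}$. Inserting the Step~1 bound and collecting the two operator-norm factors produces exactly the exponents $[(2/p)-1]/[(2/r)-1]$ on the weak norm and $[(2/r)-(2/p)]/[(2/r)-1]$ on the $L^2$ norm, while the numerical prefactor becomes $\big(2^{s}\,s(\tfrac1{s-r}+\tfrac1{2-s})\big)^{\theta/s}$. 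The remaining task is the algebraic check that this equals $c(p,r)$; with $s=2pr/(p+r)$ the two key identities are $\theta/s=(2-p)(p+r)/\big(2p(p+r-pr)\big)$ (the outer exponent) and $s\big(\tfrac1{s-r}+\tfrac1{2-s}\big)=p(p+r)(2-r)/\big((p+r-pr)(p-r)\big)$, both by direct computation. Equality at $p=2$ is then immediate since $\theta=0$ there, and the $r=1$ statement follows by specialization, using $p+r-pr=1$ and $2-r=1$.

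I expect the genuine difficulty to be entirely in the constant bookkeeping: selecting the intermediate exponent $s$ and the balancing threshold $\gamma$ so that the Marcinkiewicz constant stays a single additive term, and then verifying the two algebraic identities above so that the two-stage constant lands precisely on the closed form $c(p,r)$ with the sharp $L^2$ endpoint. The analytic ingredients --- the distribution-function estimates, the Tonelli interchange, the convergence of the $\alpha$-integrals (which needs $r<p<2$), the fact that $L^p\subset L^r+L^2$ so that $Zf$ is defined, and the applicability of Riesz--Thorin (which needs only linearity) --- are all routine.
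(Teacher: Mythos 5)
Your proposal is correct and follows essentially the same route as the paper: Marcinkiewicz interpolation up to the harmonic-mean exponent $s=2pr/(p+r)$ (the paper calls it $q$ and simply cites Grafakos's Theorem 1.3.2 for the constant $2\big(\tfrac{q}{q-r}+\tfrac{q}{2-q}\big)^{1/q}$ rather than re-deriving it from the splitting argument), followed by Riesz--Thorin between $L^s$ and $L^2$, with equality at $p=2$ because the Riesz--Thorin weight on the $L^s$ bound vanishes there. Your two algebraic identities for $\theta/s$ and $s\big(\tfrac{1}{s-r}+\tfrac{1}{2-s}\big)$ check out and reproduce the paper's constant $c(p,r)$ exactly.
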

A similar result holds if we assume strong type $(q,q)$, but the proposition focuses on strong type $(2,2)$ because so many operators in harmonic analysis are bounded on $L^2$.

The case $r=1$ of the proposition improves by a factor of about $8$ (when $p$ is close to $2$) on the best bound we found in the literature, which is an exercise in the monograph by Grafakos \cite[Exercise 1.3.2]{G08a}. We follow the same approach as Grafakos, except in the proof below we employ the harmonic mean of $1$ and $p$ instead of the arithmetic mean; the harmonic mean yields simpler formulas.
\begin{proof}[Proof of Proposition~\ref{interpr}]
Suppose $r < q < p \leq 2$. Then Marcinkiewicz interpolation applied with $r<q<2$ (see \cite[Theorem 1.3.2]{G08a}) implies boundness of $Z$ on $L^q$, with
\begin{equation} \label{marcinbound}
\lVert Z \rVert_{L^q \to L^q} \leq 2 \Big( \frac{q}{q-r} + \frac{q}{2-q} \Big)^{\! \! 1/q} \, \lVert Z \rVert_{L^r \to \text{weak-}L^r}^{\left[(2/q) - 1\right]/\left[ (2/r)-1 \right]} \, \lVert Z \rVert_{L^2 \to L^2}^{\left[ (2/r) - (2/q) \right]/\left[ (2/r)-1 \right]} .
\end{equation}
Next, Riesz--Thorin interpolation applied with $q < p \leq 2$ (see \cite[Theorem 1.3.4]{G08a}) says that
\[
\lVert Z \rVert_{L^p \to L^p}
\leq \lVert Z \rVert_{L^q \to L^q}^{\left[ (2/p)-1 \right]/\left[ (2/q)-1 \right]} \lVert Z \rVert_{L^2 \to L^2}^{\left[ (2/q)-(2/p) \right]/\left[ (2/q)-1 \right]} .
\]
We substitute the Marcinkiewicz bound \eqref{marcinbound} into this last Riesz--Thorin bound, giving that
\begin{align}
\lVert Z \rVert_{L^p \to L^p}
& \leq \Big( \frac{2^q}{1-(r/q)} + \frac{2^q}{(2/q)-1} \Big)^{\! \left[ (2/p)-1\right] / (2-q)}  \label{interim} \\
& \qquad \qquad \qquad \times \lVert Z \rVert_{L^r \to \text{weak-}L^r}^{\left[(2/p) - 1\right]/\left[ (2/r)-1 \right]} \, \lVert Z \rVert_{L^2 \to L^2}^{\left[ (2/r) - (2/p) \right]/\left[ (2/r)-1 \right]} . \notag
\end{align}

We would like to choose $q \in (r,p]$ to minimize the right side of bound \eqref{interim}. That coefficient seems too complicated for analytical minimization to be feasible, but numerical work suggests that a good (\emph{i.e.}, within a factor of about $2$ of being minimal) choice for $q$ is the harmonic mean of $r$ and $p$:
\[
\frac{1}{q} = \frac{1}{2} \big( \frac{1}{r} + \frac{1}{p} \big) .
\]
Substituting this choice of $q$ into \eqref{interim} yields the constant $c(p,r)$ claimed in the proposition.

Notice equality holds when $p=2$, because equality holds in the Riesz--Thorin bound.
\end{proof}

\vspace*{12pt}

\medskip

\end{document}